\documentclass[reqno,11pt]{amsart}
\usepackage{amssymb}
\usepackage{graphicx}
\usepackage{amsmath}
\usepackage[margin=1.23in]{geometry}

\usepackage{esint}
\usepackage{cite}
\usepackage[nobysame]{amsrefs}
\usepackage{appendix}

\usepackage[usenames, dvipsnames]{color}
\usepackage{verbatim}
\usepackage{mathrsfs}
\usepackage{bm}
\usepackage{cite}
\usepackage{todonotes}
\allowdisplaybreaks[4]

\numberwithin{equation}{section}

\newtheorem{theorem}{Theorem}[section]
\newtheorem{corollary}[theorem]{Corollary}
\newtheorem{lemma}[theorem]{Lemma}
\newtheorem{prop}[theorem]{Proposition}

\theoremstyle{definition}
\newtheorem{remark}[theorem]{Remark}

\theoremstyle{definition}

\theoremstyle{definition}

\makeatletter
\def\dashint{\operatorname%
	{\,\,\text{\bf-}\kern-.80em\DOTSI\intop\ilimits@\!\!}}
\makeatother

\def\\det{\text{\det}}

\def\R{\mathbb{R}}

\def\.5{\frac{1}{2}}

\def\bR{\mathbb{R}}

\def\bS{\mathbb{S}}

\newcommand{\RN}[1]{%
	\textup{\uppercase\expandafter{\romannumeral#1}}%
}

\renewcommand{\epsilon}{\varepsilon}
\newcommand{\osc}{\operatorname{osc}}
\newcommand{\dv}{\operatorname{div}}
\newcounter{marnote}

\begin{document}


\title[Boundary estimates for the insulated problem]{Optimal boundary gradient estimates  for the insulated conductivity problem}

\author[H.G. Li]{Haigang Li}
\address[H.G. Li]{School of Mathematical Sciences, Beijing Normal University, Laboratory of MathematiCs and Complex Systems, Ministry of Education, Beijing 100875, China.}
\email{hgli@bnu.edu.cn}

\author[Y. Zhao]{Yan Zhao}
\address[Y. Zhao]{School of Mathematical Sciences, Beijing Normal University, Laboratory of MathematiCs and Complex Systems, Ministry of Education, Beijing 100875, China.}
\email{zhaoyan\_9926@mail.bnu.edu.cn}

\maketitle

\begin{abstract}
In this paper we study the boundary gradient estimate of the solution to the insulated conductivity problem with the Neumann boundary data when a convex insulating inclusion approaches the boundary of the matrix domain. The gradient of solutions may blow up as the distance between the inclusion and the boundary, denoted as $\varepsilon$, approaches to zero. The blow up rate was previously known to be sharp in dimension $n=2$ (see Ammari et al.\cite{AKLLL}). However, the sharp rates in dimensions $n\geq3$ are still unknown. In this paper, we solve this problem by establishing upper and lower bounds on the gradient and prove that the optimal blow up rates of the gradient are always of order $\epsilon^{-1/2}$ for general strictly convex inclusions in dimensions $n\geq3$.  Several new difficulties are overcome and the impact of the boundary data on the gradient is specified. This result highlights a significant difference in blow-up rates compared to the interior estimates in recent works (\cites{LY,Weinkove,DLY,DLY2,LZ}), where the optimal rate is $\epsilon^{-1/2+\beta(n)}$, with $\beta(n)\in(0,1/2)$ varying with dimension $n$. Furthermore, we demonstrate that the gradient does not blow up for the corresponding Dirichlet boundary problem.
\end{abstract}

\section{Introduction}

In this paper we study the boundary gradient estimate for the conductivity problem when one insulating inclusion is located very close to the boundary of the matrix domain.  We start by describing the nature of our domain. Let $D$ be a bounded domain in $\mathbb{R}^{n}$ that contains a strictly convex open set $D_{1}$, with a small distance $\varepsilon:= \mbox{dist}(D_1, \partial{D})$ from the boundary. We assume that both $\partial{D}$ and $\partial{D}_{1}$ are of class $C^{2}$, and consider the following two kinds of boundary value problems with prescribed Dirichlet and Neumann boundary data: 
for a given $\varphi\in C^{1,\alpha}(\partial{D})$, $\alpha>0$,
\begin{equation}\label{equinftykD}
\begin{cases}
\nabla\cdot\left(a_{k}(x)\nabla u_{k}\right)=0 &\mbox{in}~ D,\\
u_{k}=\varphi&\mbox{on}~\partial{D},
\end{cases}
\end{equation}
and for a given $\phi\in C^{\alpha}(\partial{D})$, 
\begin{equation}\label{equinftykN}
\begin{cases}
\nabla\cdot\left(a_{k}(x)\nabla u_{k}\right)=0 &\mbox{in}~ D,\\
\partial_{\nu}u_{k}=\phi&\mbox{on}~\partial{D},
\end{cases}
\end{equation}
where
\begin{equation}\label{ak1}
a_{k}(x)=
\begin{cases}
k\in(0,\infty)&\mbox{in}~D_{1},\\
1&\mbox{in}~\Omega:=D\setminus{D}_{1},
\end{cases}
\end{equation}
and $\partial_{\nu}u:=\frac{\partial u}{\partial\nu}$, and $\nu$ represents the outward normal of $\partial{D}$. To ensure the existence of the solution to the Neumann problem \eqref{equinftykN}, we additionally assume that $\int_{\partial D}\phi=0$. The equations above can be regarded as simple models for electric conduction when one inclusion is very close to the matrix boundary. Here $a_k$ represents the conductivity, which can be assumed to be $1$ in the matrix after normalization. The solution $u_k$ represents the voltage potential, and the gradient $\nabla u_{k}$ represents the electric fields. From an engineering point of view, it is crucial to estimate $\nabla u_{k}$ in the narrow regions between adjacent inclusions and between the inclusions and the matrix boundary. This is because these narrow gaps may exhibit a high concentration of extreme electric fields in the high contrast composite materials. In the last two decades, significant progress has been achieved in estimating gradients in the conductivity problem, including both the perfect case ($k=\infty$) and the insulted case $(k=0)$. The analogous problem concerning elastic stress in the context of linear elasticity was numerically investigated by Babu\u{s}ka, et al. \cite{bab}.

Before  investigating the impact of the boundary data on the gradient of $u$, we first review some important progress regarding the interior case where $D$ includes two adjacent inclusions $D_{1}$ and $D_{2}$, separated by a small distance $\varepsilon: = \mbox{dist}(D_1, D_2)$. In this situation, the piecewise constant coefficient can be described as follows: \begin{equation}\label{ak2}
a_{k}(x)=
\begin{cases}
k\in(0,\infty)&\mbox{in}~D_{1}\cup{D}_{2},\\
1&\mbox{in}~\widetilde{\Omega}:=D\setminus\overline{D_{1}\cup{D}_{2}},
\end{cases}
\end{equation}
and $D_{1}\cup D_{2}$ is always assumed to be far away from the boundary $\partial{D}$, specifically $\mathrm{dist}(D_1 \cup D_2, \partial {D}) > c$, for some positive constant $c$. For a finite and strictly positive $k$, Bonnetier and Vogelius \cite{BV} first proved that $|\nabla u_{k}|$ remains bounded for two circular touching disks of comparable radii in dimension two. Li and Vogelius \cite{LV} extended this result to general second order elliptic equations of divergence form with piecewise H\"{o}lder coefficients and general shape of inclusions in all dimensions. Subsequently, Li and Nirenberg \cite{LN} further extended to study the elliptic systems, including the Lam\'e system.

When $k$ degenerates to $\infty$ (perfect conductor) or $0$ (insulator), it has been shown in \cite{Kel} that the gradient of the solutions always becomes large as $\epsilon$ tends to $0$. It is well known that as $k$ goes to $\infty$ in equation \eqref{equinftykD} with coefficient \eqref{ak2}, $u_{k}$ converges to the solution of the perfect conductivity problem:
\begin{equation}\label{perfect1}
\begin{cases}
\Delta u=0 &\mbox{in}~ \tilde{\Omega},\\
\nabla u=0&\mbox{on}~\bar{D}_{i},i=1,2,\\
\int_{\partial{D}_{i}}\partial_{\nu}u=0& i=1,2,\\
u=\varphi&\mbox{on}~\partial{D};
\end{cases}
\end{equation}
while, as $k$ goes to $0$, $u_{k}$ converges to the solution of the insulated conductivity problem:
\begin{equation}\label{insulated1}
\begin{cases}
\Delta u=0 &\mbox{in}~ \tilde{\Omega},\\
\partial_{\nu}u=0&\mbox{on}~\partial{D}_{i},i=1,2,\\
u=\varphi&\mbox{on}~\partial{D},
\end{cases}
\end{equation}
see, e.g., appendix of \cites{BLY1,BLY2} for the derivation. First, in two dimensions, when $D_{1}$ and $D_{2}$ are circular inclusions in $\R^2$, Ammari et al.  \cite{AKLLL} and Ammari, Kang and Lim \cite{AKL} showed that the optimal blow up rate of $\nabla u$ is of order $\epsilon^{-1/2}$ for both the perfect and the insulated cases. Yun \cite{Y1} extended the result to bounded strictly convex smooth inclusions. Bao, Li and Yin \cites{BLY1,BLY2} studied the perfect conductivity problem \eqref{perfect1} for two convex inclusions and proved that the optimal blow up rates of $\nabla u$ are, respectively, $\epsilon^{-1/2}$ in dimension two, $|\epsilon\ln\epsilon|^{-1}$ in dimension three, and $\epsilon^{-1}$ in dimensions $n\geq4$. Lim and Yun \cite{LY} also studied the case of spherical perfect conductors in dimension three.  For further research on the asymptotic behavior of $\nabla u$, refer to works such as Kang, Lim and Yun \cite{KLY1,KLY2}, Bonnetier and Triki \cite{BT2} and other related works \cite{ACKLY,BT1,DL,LLY,LWX,DZ,KL,CY,Gor,L}. For the gradient estimates for the Lam\'e system with hard inclusions, see e.g. \cite{BLL,BLL2,Li,KY}. The boundary estimates of Lam\'{e} systems with partially infinite coefficients were studied in \cite{BLJ} and the boundary estimates of the  perfect conductivity problem were investigated in \cite{LX}, respectively. It is worth pointing out that for the boundary estimate of the perfect conductivity problem, the optimal blow-up rates of $\nabla u$ are the same as the interior case in \cite{BLY1}, which vary with dimension $n$. However, the optimality of the blow up rate for the boundary estimates of the insulated problem when one insulator approaches the matrix boundary in dimensions $n\geq3$ remains unresolved. 

For the interior estimates of the insulated conductivity problem, in addition to the aforementioned results in dimension two \cite{AKLLL,AKL}, Bao, Li and Yin \cite{BLY2} also obtained an upper bound of $\nabla u$ of order $\epsilon^{-1/2}$ for all dimensions $n\geq2$ for problem \eqref{insulated1}. However, for about a decade, the question that  whether this upper bound is sharp in dimensions $n\geq3$ remained open. It was not until recently that Li and Yang \cite{LY} took advantage of an extension method in \cite{BLY2} and a Harnack inequality to improve the upper bound in dimensions $n\geq3$ to be of order $\epsilon^{-1/2+\beta}$ for some $\beta>0$. Subsequently, by using a direct maximum principle argument, Weinkove \cite{Weinkove} established an upper bound of order $\epsilon^{-1/2+\beta(n)}$ with a specific constant $\beta(n)>0$ for $n\geq4$ in the case where $D_{1}$ and $D_{2}$ are both balls. The argument presented in \cite{Weinkove} relies on the facts that the solution is bounded and that the blowup only occurs in the narrow region. However, for the Neumann boundary problem, the boundedness of the solution's oscillation may deteriorate. The optimality in dimensions $n\geq3$ was ultimately proved by Dong, Li and Yang \cite{DLY}, particularly with the explicit $\beta(n)= [-(n-1)+\sqrt{(n-1)^2+4(n-2)}]/4$ when the insulators are balls. The significant breakthrough made in \cite{DLY} relies on a known pointwise upper bound $(\varepsilon+|x'|^2)^{-\frac{1}{2}}$ obained by the extension method in \cite{BLY2}, as well as some results on the degenerate elliptic equation where the analysis is based on the harmonic decomposition and the Moser's iteration technique. When insulators are general strictly convex, difficulties arise due to the breaking of radial symmetry. The optimal blowup rate for strictly convex insulators in dimension $n=3$ is finally proved in \cites{DLY2,LZ}. The insulated conductivity problem with $p$-Laplacian was investigated in \cite{DYZ}. 

Our main objective in this paper is to investigate the boundary estimates as strictly convex inclusions approach the matrix boundary and to clarify the effect on the blow up rate of the gradient from the boundary data. In this regard, Ammari, Kang, Lee, Lee, and Lim \cite{AKLLL} considered the boundary problem by using conformal transform to transform it into a kind of interior problem and proved that in two dimensions the optimal blow up rate of $\nabla u$ is as well of order $\epsilon^{-1/2}$ when the circular conductor is perfect or insulated. At the end of Section 1 in \cite{AKLLL}, they mentioned that ``It seems challenging to obtain similar results in three dimensions. At this moment it is even not clear what the blow-up rate of the gradient would be in three dimensions''.  In this present paper, we focus on this boundary estimate problem and prove that the optimal blow up rate is always of order $\epsilon^{-1/2}$ in all dimensions $n\geq3$ for strictly convex inclusions when the Neumann boundary data is prescribed. We provide upper and lower bounds of $\nabla u$ to support this assertion. This discovery contrasts with the interior results discussed above, where the optimal blow up rates vary with dimensions (refer to \cite{DLY, DLY2,LZ}).  For the Dirichlet problem counterpart, we prove that $\nabla u$ is always bounded and does not blow up.  

We consider the Dirichlet and Neumann problems for the insulated conductivity problem in dimensions $n\geq3$. By taking $k\to0$ in equation \eqref{equinftykD}--\eqref{ak1}, the solution $u_k$ to \eqref{equinftykD} and \eqref{equinftykN} will weakly converge to the solution to \eqref{equinfty-0} and \eqref{equinfty0}, respectively. The derivation of \eqref{equinfty-0} and \eqref{equinfty0} is similar to the Appendix in \cite{BLY2}. For a given $\varphi\in C^{1,\alpha}(\partial{D})$, 
\begin{equation}\label{equinfty-0}
\begin{cases}
\Delta{u}=0& \mbox{in}~\Omega:=D\backslash D_1,\\
\partial_{\nu}u=0&\mbox{on}~\partial{D_1},\\
u=\varphi(x)&\mbox{on}~\partial{D};
\end{cases}
\end{equation}
and for a given $\phi\in C^{\alpha}(\partial{D})$, $\alpha\in(1-{2}/{n},1)$, with $\int_{\partial D}\phi=0$, 
\begin{equation}\label{equinfty0}
\begin{cases}
\Delta{u}=0& \mbox{in}~\Omega,\\
\partial_{\nu}u=0&\mbox{on}~\partial{D_1},\\
\partial_{\nu}u=\phi(x)&\mbox{on}~\partial{D}.
\end{cases}
\end{equation}

The boundary gradient estimate for the insulated conductivity problem \eqref{equinfty0} is closely related to the interior estimate. Our initial ideas are inspired by the reduction argument in \cite{DLY}, which reduces \eqref{equinfty0} to the $n-1$ dimension degenerate elliptic equation. However, it encounters several additional challenges. New ideas are needed to overcome them.

The first challenge we faced was that the extension method used in \cites{BLY2,DLY} strictly required that the Neumann boundary data $\phi$ locally equals to $0$ on the boundary of the narrowest region. To address this, instead of trying to transform \eqref{equinfty0} into an elliptic equation with zero Neumann boundary, we adopt a different strategy presented in detail in Section 3. This strategy is roughly as follows. By the $W^{2,p}$ estimates, we can obtain a pointwise upper bound $(\varepsilon+|x'|^2)^{-1}$ for $\nabla u$. Then by using this upper bound and combining with the $C^{1,\alpha}$ estimate, we can obtain an upper bound estimate $(\varepsilon+|x'|^2)^{-1+\sigma}$ for $\nabla\bar u$, where $\sigma\in(0,\frac{1}{2})$, $\bar u$ is the solution to the $n-1$ dimensional elliptic equation. By establishing appropriate estimates of $\nabla(u-\bar u)$ and employing the bootstrap argument, it becomes possible to achieve a stronger upper bound for $\nabla u$.

Secondly,  the type of degenerate elliptic equation discussed in \cite{DLY} for the interior case does not encompass the degenerate elliptic equation of the boundary case when $|\phi(0')|\sim 1$. Here, $|\phi(0')|\sim 1$ means that $\frac{1}{C}\leq |\phi(0')|\leq C$, where $C$ is a universal constant independent of $\varepsilon$. The theorems and the proofs presented in \cite{DLY} can not be modified to accommodate the boundary case when $|\phi(0')|\sim 1$. Indeed, one can review the Moser's iteration argument of Lemma 2.3 in \cite{DLY}. If the hypothesis $1+\gamma-2s>0$ in Lemma 2.3 is relaxed to allow $1+\gamma-2s\geq0$, the uniform global boundedness result will fail as $\varepsilon\to0$. The hypothesis $1+\gamma-2s=0$ in the interior case corresponds to the requirement $|\phi(0')|\sim 1$ in the boundary case. Although uniform global boundedness is not guaranteed in the boundary case, we can establish a uniform local oscillation estimate for the solution to the degenerate elliptic equation, by using a Harnack inequality and the local estimates near boundary. Besides, the global oscillation estimate is established as an iteration result of the local boundary estimate. With these local and global properties, we complete the proof. 

Before stating our main results, we first describe the nature of our domains. Let $D_1^{*}\subset D$ and suppose that $\partial D$ and $\partial D_1^*$ are relatively strictly convex. The boundary of $D_1^{*}$ touches $\partial D$  at the origin, with the inner normal of $\partial D$ being the positive $x_n$-axis . By translating $D_1^{*}$ by $\varepsilon$ along $x_n$-axis, we obtain $D_1=D_1^{*}+(0',\varepsilon)$. We use the notation $x=(x',x_n)$ to represent a point in $\R^{n}$, where $x'\in \R^{n-1}$. Because $\partial D$ and $\partial D_1$ are of $C^{2}$, we can assume that $\partial{D}_{1}$ and $\partial{D}$ can be represented by 
$$x_{n}=\varepsilon+f(x'),\quad\mbox{and}\quad x_{n}=g(x'),
\quad
\textrm{for}\; |x'|\leq\,2R ,
$$
respectively, where $f$ and $g$ are two $C^{2}$ functions.   
Noting that $\partial D_1^*$ and $\partial D$ touch at the origin, and by the relative strict convexity assumption of $\partial D_1^*$ and $\partial D$, $f$ and $g$ are two $C^{2}$ functions satisfying
\begin{align}
f(x')&\geq g(x'), \quad\quad\mbox{for}\ 0\leq|x'|\leq 2R,\label{fg0}\\
f(0')&=g(0')=0,\quad \nabla_{x'}f(0')=\nabla_{x'}g(0')=0,\quad D^2(f-g)(0')>0.\label{fg1}
\end{align}
By \eqref{fg1} and Taylor's theorem, for small $R>0$, we further have
\begin{equation}\label{fg}
\kappa|x'|^2\leq f(x')-g(x')\leq\frac{1}{\kappa}|x'|^2, \quad \mbox{for}\,0\leq|x'|\leq 2R,\,\mbox{for some}\,\kappa>0.
\end{equation}

\begin{remark}
The smoothness assumptions of $\partial D$ and $\partial D_1$ can be relaxed. We can assume that $\partial D$ and $\partial D_1$ are of $C^{1,1}$ and $f$ and $g$ are two $C^{1,1}$ functions satisfying
\begin{equation}
\begin{split}
f(0')&=g(0')=0,\quad \nabla_{x'}f(0')=\nabla_{x'}g(0')=0,\\ \kappa|x'|^2&\leq f(x')-g(x')\leq\frac{1}{\kappa}|x'|^2,\quad\mbox{for}\ 0\leq|x'|\leq 2R .
\end{split}
\end{equation}
\end{remark}

Now we introduce some notations. For $0<r\leq\,2R $, we denote
$$B'_{r}(z'):=\left\{x'\in\mathbb{R}^{n-1}~\big|~|x'-z'|<r \right\}\subset\mathbb{R}^{n-1},$$
and
$$\Omega_r(z):=\left\{(x',x_{n})\in \mathbb{R}^{n}~\big|~g(x')<x_{n}<\varepsilon+f(x'),~x'\in\,B'_{r}(z')\right\},$$
with top and bottom boundaries
\begin{equation}
\begin{split}
 \Gamma_r^{+}(z):=&\left\{(x',x_{n})\in \mathbb{R}^{n}~\big|~x_n= \varepsilon+f(x'),~x'\in\,B'_{r}(z')\right\},\\
 \Gamma_r^{-}(z):=&\left\{(x',x_{n})\in \mathbb{R}^{n}~\big|~x_n= g(x'),~x'\in\,B'_{r}(z')\right\}.
\end{split}
\end{equation}
When there is no confusion, we also use notations $\Omega_r=\Omega_r(0)$, $\Gamma_r^{\pm}=\Gamma_r^{\pm}(0)$ and $B_r'=B_r'(0')$.
We further assume that the $C^2$ norms of $\partial{D}_{1}$ and $\partial{D}$ are bounded by a constant independent of $\varepsilon$. Throughout this paper, we call a constant is universal if it depends only on $n$, $\alpha$,     $R$, the upper bound of the $C^{2}$ norms of $\partial{D}_{1}$ and $\partial{D}$,  but not on $\epsilon$. For instance, $\kappa$, $\bar R$, $\tilde R$ and $C$ are such universal constants.

We denote 
$$\underset{\Omega_r(z)}\osc\,u:=\underset{\Omega_r(z)}\sup\,u-\underset{\Omega_r(z)}\inf\,u,$$
and introduce a $*$-norm:
\begin{equation}\label{*norm1}
\|\phi\|^{*}_{C^{\alpha}\big(\Gamma^{-}_{s}(z)\big)}:=\|\phi\|_{L^{\infty}\big(\Gamma^{-}_{s}(z)\big)}+\eta(z')^{\alpha}[\phi]_{C^{\alpha}\big(\Gamma^{-}_{s}(z)\big)}.
\end{equation}
For $ |z'|<2R$, set
\begin{equation}\label{deltaz}
\delta(z'):=\varepsilon+f(z')-g(z'),\quad \eta(z')=\varepsilon+|z'|^2.
\end{equation}
It is easy to verify that $\delta(z')\sim\eta(z')$. In this paper, $A\sim B$ means that there exists a universal constant $C$, independent of $z$ and $\epsilon$, such that $\frac{1}{C}A\leq B\leq CA$. 

Our main results of this paper are as follows.
 
\begin{prop}\label{prop_mainthm}
For $n\geq3$ and $0<\varepsilon\ll1$, let $u$ be a solution to \eqref{equinfty0}. Suppose that $D_1$ and $D$ satisfy \eqref{fg0}--\eqref{fg}. Then for some $\bar{R}<R$, there exists a universal constant $C$, such that
$$|\nabla u(z)|\leq C\eta(z')^{-1/2}\Big(\,\underset{\Omega_{\frac{1}{4}\sqrt{\eta(z')}}(z)}\osc\,u+\|\phi\|^{*}_{C^{\alpha}\big(\Gamma^{-}_{\frac{1}{4}\sqrt{\eta(z')}}(z)\big)}\Big),\quad\mbox{for}~z\in\Omega_{\bar{R}}.$$
\end{prop}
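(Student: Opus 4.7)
The plan is a bootstrap/reduction argument of the kind sketched at the end of the Introduction. Fix $z\in\Omega_{\bar R}$, write $r:=\tfrac14\sqrt{\eta(z')}$, so $\Omega_r(z)$ is a curved slab of horizontal diameter $\sim r$ and vertical thickness $\delta(z')\sim r^2$. The target is $|\nabla u(z)|\le C r^{-1}(\osc u+\|\phi\|^*)$.

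As a baseline I would first establish $|\nabla u(z)|\le C\eta(z')^{-1}(\osc u+\|\phi\|^*)$ by the rescaling $\xi:=(x-z)/r$ applied to $\tilde u(\xi):=u(z+r\xi)$, together with interior $W^{2,p}$ and boundary $C^{1,\alpha}$ Neumann regularity in the resulting (still narrow) slab. This baseline loses a factor $\sqrt{\eta}$ relative to the target and will serve as the initial input of the bootstrap.

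To close the gap I would reduce to an $(n-1)$-dimensional problem by vertical averaging. Setting $\bar u(x'):=\delta(x')^{-1}\!\int_{g(x')}^{\varepsilon+f(x')}u(x',t)\,dt$ and integrating $\Delta u=0$ in $x_n$ with the Neumann conditions on $\Gamma^\pm$, $\bar u$ satisfies a weighted divergence-form equation
\[
\mathrm{div}_{x'}\bigl(\delta(x')A(x')\nabla_{x'}\bar u\bigr)=-\sqrt{1+|\nabla g|^2}\,\phi(x')+R[u](x'),
\]
with $A(0')=I$ bounded and a remainder $R[u]$ controlled by $u-\bar u$. The weight $\delta(x')\sim\eta(x')$ degenerates at $0'$. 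The $C^{1,\alpha}$/Moser/Harnack theory for this type of degenerate equation (developed in \cites{DLY,DLY2,LZ}, adapted to handle the nonzero source $\phi$ as indicated in the Introduction) yields $|\nabla\bar u(z')|\le C\eta(z')^{-1+\sigma}(\osc u+\|\phi\|^*)$ for some $\sigma>0$. A Poincar\'e--Wirtinger inequality across the thin vertical slice controls $\|u-\bar u\|_{L^\infty(\Omega_r(z))}$ by $C\delta(z')\cdot\|\partial_{x_n}u\|_{L^\infty}$, which together with the baseline above contributes at most $C\eta(z')^{-1/2}(\osc u+\|\phi\|^*)$ to $|\nabla u(z)|$. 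Adding the two contributions gives an improved bound $|\nabla u(z)|\le C\eta(z')^{-1+\sigma}(\osc u+\|\phi\|^*)$; feeding this back into the averaged equation and re-running the bootstrap increases $\sigma$, and after finitely many iterations $\sigma$ saturates at $\tfrac12$, which yields the proposition.

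The main technical obstacle is carrying out the degenerate-equation analysis for $\bar u$ when $|\phi(0')|\sim 1$. As emphasized in the Introduction this is the borderline case $1+\gamma-2s=0$ of \cite[Lemma~2.3]{DLY}, where the uniform global Moser bound is not available. My plan is to substitute a local oscillation estimate obtained from a boundary Harnack inequality, and then iterate these local estimates to propagate enough global control to close the bootstrap.
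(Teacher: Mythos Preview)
Your overall architecture---baseline $\eta^{-1}$ estimate, vertical averaging to $\bar u$, then bootstrap---matches the paper's. But you take an unnecessarily heavy route for the $\bar u$ step, and you have misplaced the main difficulty.

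The observation you are missing is that on the ball $B'_{\frac14\sqrt{\eta(z')}}(z')$ the weight $\delta(x')$ satisfies $\delta(x')\sim\delta(z')$ uniformly (this is \eqref{ryz} in Lemma~\ref{lem21}). Hence on that scale the averaged equation for $\bar u$ is \emph{uniformly} elliptic, not degenerate, and ordinary interior $C^{1,\alpha}$ estimates (after rescaling $B'_{\sqrt{\eta(z')}/32}(z')$ to a unit ball) already give
\[
\|\nabla\bar u\|_{L^\infty(B'_{\sqrt{\eta(z')}/64}(z'))}\le C\eta(z')^{-1/2}\bigl(\osc u+\|\phi\|^*\bigr)+C\eta(z')^{(1-\mu)/2}\|\nabla u\|^*_{C^\mu},
\]
the last term arising from the source $\bar F^i$ that carries $\partial_n u$ (Lemma~\ref{lem34}). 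No degenerate Moser/Harnack machinery from \cite{DLY,DLY2,LZ} is invoked anywhere in the proof of this proposition. The paper's bootstrap is then a concrete two-step iteration of the single self-improving inequality
\[
\|\nabla u\|^*_{C^\mu(\Omega_{\eta(z')/16}(z))}\le C\eta(z')^{\frac{1-\mu}{2}}\|\nabla u\|^*_{C^\mu(\Omega_{\sqrt{\eta(z')}/32}(z))}+C\eta(z')^{-1/2}\bigl(\osc u+\|\phi\|^*\bigr),
\]
closed at the second step by the baseline from Remark~\ref{u-1}; with $\mu\le\tfrac12$ the final exponent is $-1/2$. There is no open-ended iteration in an undetermined $\sigma$.

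Your concern about $|\phi(0')|\sim 1$ and the borderline case of \cite[Lemma~2.3]{DLY} is genuine, but it belongs to the proof of Proposition~\ref{ubound} (controlling $\osc u$ itself), not to Proposition~\ref{prop_mainthm}. Here $\osc u$ sits harmlessly on the right-hand side and needs no independent control; the degenerate-equation analysis enters only later, in Section~4.
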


\begin{remark}
We can not conclude that $\eta(z')^{-1/2}$ is a pointwise upper bound for $\nabla u$ from Proposition
\ref{prop_mainthm}. This is because, as $\varepsilon\to0$, the uniform boundedness of $\underset{\Omega_{\frac{1}{4}\sqrt{\eta(z')}}(z)}\osc\,u$ is not  obvious. In Corollary \ref{cor11}, we will show that in a bigger region $\Omega_{\tilde R}$, $\underset{\Omega_{\tilde R} }\osc\,u$ may blow up as $\varepsilon\to0$.  To complete the proof for the upper bound estimate, the following Proposition is necessary.
\end{remark}

\begin{prop}\label{ubound}
For $n\geq3$ and $0<\varepsilon\ll1$, let $u$ be a solution to \eqref{equinfty0}. Suppose that $D_1$ and $D$ satisfy \eqref{fg0}--\eqref{fg}. Then there exists a constant $\tilde{R}<\bar{R}$, such that  
\begin{align}\label{uinfty}
\Big\|\,u-\fint_{\Omega\backslash\Omega_{\tilde{R}/2}}u\,\Big\|_{L^{\infty}({\Omega}\backslash\Omega_{3\tilde{R}/4})}\leq C\|\phi\|_{C^{\alpha}(\partial D)},
\end{align}
and 
\begin{align}\label{oscu}
\underset{\Omega_{\frac{1}{4}\sqrt{\eta(z')}}(z)}\osc\,u\leq C\|\phi\|_{C^{\alpha}(\partial D)},\quad\mbox{for}~z\in\Omega_{\tilde{R}}.
\end{align}
\end{prop}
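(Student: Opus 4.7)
For \eqref{uinfty}, the plan is to exploit the fact that $\Omega \setminus \Omega_{\tilde R/2}$ has geometry uniform in $\varepsilon$: the narrow gap is excluded, so all constants can be chosen independent of $\varepsilon$. Setting $v := u - \fint_{\Omega \setminus \Omega_{\tilde R/2}} u$, the function $v$ is harmonic in $\Omega$ with the same Neumann data as $u$ and has zero mean on $\Omega \setminus \Omega_{\tilde R/2}$. Poincar\'e on this uniformly regular set gives $\|v\|_{L^{2}(\Omega \setminus \Omega_{\tilde R/2})} \leq C\|\nabla v\|_{L^{2}(\Omega \setminus \Omega_{\tilde R/2})}$; testing $\Delta v = 0$ against $v\chi^{2}$ with a cutoff $\chi$ vanishing on $\Omega_{\tilde R/2}$ and absorbing the boundary term $\int_{\partial D}\phi v\chi^{2}$ via the trace inequality then yields $\|\nabla v\|_{L^{2}(\Omega \setminus \Omega_{2\tilde R/3})} \leq C\|\phi\|_{L^{2}(\partial D)}$. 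A boundary De Giorgi--Moser iteration for the harmonic $v$ (with $C^{\alpha}$ Neumann data on $\partial D$ and zero Neumann data on $\partial D_{1}$, both uniformly away from the gap) upgrades this to the $L^{\infty}$ bound \eqref{uinfty}.

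For \eqref{oscu}, the immediate obstacle is that \eqref{uinfty} does not apply directly to the balls $\Omega_{\frac{1}{4}\sqrt{\eta(z')}}(z)$ with $z\in\Omega_{\tilde R}$, since these generally dip into the narrow region where no uniform $L^{\infty}$ bound is available; moreover, feeding Proposition \ref{prop_mainthm} into the trivial bound $\osc \leq \text{diam}\cdot\sup|\nabla u|$ is circular. The plan is instead to pass through the reduction developed earlier in the paper to the $(n-1)$-dimensional degenerate elliptic equation satisfied by an averaged quantity $\bar u(x')$ on $B'_{\tilde R}$. By \eqref{uinfty}, $\osc_{B'_{r}}\bar u$ is uniformly controlled at the outer scale $r\sim\tilde R$. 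One then proves a uniform local oscillation-decay inequality of the form $\osc_{B'_{r}(z')}\bar u \leq \theta\,\osc_{B'_{2r}(z')}\bar u + C\|\phi\|_{C^{\alpha}}$ with some $\theta<1$, via the Harnack inequality and local near-boundary estimates for the degenerate equation, and iterates it inward from $r\sim\tilde R$ down to $r\sim\sqrt{\eta(z')}$. This gives a uniform bound on $\osc_{B'_{\sqrt{\eta(z')}}(z')}\bar u$, which transfers back to $u$ through the estimates on $\nabla(u-\bar u)$ in the narrow region (also established earlier in the paper), yielding \eqref{oscu}.

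The hard part is the local oscillation-decay inequality for the degenerate equation itself. As emphasized in the introduction, in the boundary regime $|\phi(0')|\sim 1$ the exponent condition $1+\gamma-2s>0$ underlying the interior theory of \cite{DLY} degenerates to $1+\gamma-2s=0$, so the uniform global Moser boundedness used there does not transfer. One circumvents this by combining the Harnack inequality with local boundary estimates and iterating those local estimates into a global oscillation bound, which is precisely what closes the bootstrap above and completes the proof of Proposition \ref{ubound}.
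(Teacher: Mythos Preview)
Your plan to prove \eqref{uinfty} first and independently, then feed it into \eqref{oscu}, does not close. Testing $\Delta v=0$ against $v\chi^{2}$ with $\chi$ vanishing on $\Omega_{\tilde R/2}$ gives
\[
\int_{\Omega}|\nabla v|^{2}\chi^{2}\;\leq\;\Big|\int_{\partial D}\phi\,v\chi^{2}\Big|\;+\;C\int_{\Omega}v^{2}|\nabla\chi|^{2},
\]
and the last term lives on the annulus $\Omega_{2\tilde R/3}\setminus\Omega_{\tilde R/2}$, where you have no a priori control of $v$. Poincar\'e on $\Omega\setminus\Omega_{\tilde R/2}$ only bounds $\int v^{2}|\nabla\chi|^{2}$ by $C\int_{\Omega\setminus\Omega_{\tilde R/2}}|\nabla v|^{2}$, which strictly contains the left side and therefore cannot be absorbed. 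There is no smallness available here: $|\nabla\chi|\sim\tilde R^{-1}$ and the Poincar\'e constant on this region is $\sim\tilde R$, so the product is of order one. The same obstruction blocks any attempt to localize the energy identity away from the narrow region.

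The paper's proof shows that \eqref{uinfty} and \eqref{oscu} are genuinely coupled and must be handled simultaneously. One sets $\mathcal U_{\tilde R}:=\|u-\fint_{\Omega\setminus\Omega_{\tilde R/2}}u\|_{L^{\infty}(\Omega\setminus\Omega_{3\tilde R/4})}$ and $\mathcal O_{\tilde R}:=\sup_{z\in\Omega_{\tilde R}}\osc_{\Omega_{\frac14\sqrt{\eta(z')}}(z)}u$. The source term $\tilde F$ in the degenerate equation for $\bar u$ involves $\partial_{n}u$, which by the maximum principle is controlled by $C_{0}\mathcal O_{\tilde R}+C\mathcal U_{\tilde R}+C\|\phi\|_{C^{\alpha}}$; feeding this into the degenerate-equation estimates (Propositions~\ref{zero} and~\ref{nonzero}) yields $\mathcal O_{\tilde R}\leq C_{0}\tilde R^{2}\mathcal O_{\tilde R}+C\mathcal U_{\tilde R}+C\|\phi\|_{C^{\alpha}}$, so choosing $\tilde R$ small absorbs the first term and gives $\mathcal O_{\tilde R}\leq C\mathcal U_{\tilde R}+C\|\phi\|_{C^{\alpha}}$. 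Only then can one control $\int_{\Gamma^{-}_{\tilde R}}u^{2}$ (via $\bar u=u_{1}+u_{2}$ and \eqref{blowupu2}) and hence $\int_{\partial D}u^{2}$, which plugs into the \emph{global} energy identity $\int_{\Omega}|\nabla u|^{2}=\int_{\partial D}u\phi$ to bound $\|\nabla u\|_{L^{2}(\Omega)}$ and finally $\mathcal U_{\tilde R}$. Your outline for \eqref{oscu} via oscillation decay for the degenerate equation is in the right direction, but it too must acknowledge that the source terms carry $\mathcal O_{\tilde R}$, so the argument cannot be run downstream of an already-established \eqref{uinfty}; the two inequalities have to be derived as a closed system.
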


By Proposition \ref{prop_mainthm}, Proposition \ref{ubound} and by applying the maximum principle to $\partial_n u$, we can immediately obtain the main theorem of this paper. 
\begin{theorem}\label{cor1}
For $n\geq3$ and $0<\varepsilon\ll1$, let $u$ be a solution to \eqref{equinfty0}. Suppose that $D_1$ and $D$ satisfy \eqref{fg0}--\eqref{fg}. Then for some $\tilde{R}$ ($\tilde{R}<\bar{R}$ as in Proposition \ref{ubound}), we have
\begin{equation}\label{upperbd}
|\nabla u(z)|\leq C\eta(z')^{-1/2}\|\phi\|_{C^{\alpha}(\partial D)},\quad\mbox{for}~z
\in\Omega_{\tilde{R}},
\end{equation}
and $
\|\nabla u\|_{L^{\infty}(\Omega\backslash\Omega_{\tilde{R}})}\leq C\|\phi\|_{C^{\alpha}(\partial D)}$. Moreover, 
\begin{equation}\label{upperbdun}
\|\partial_{n} u\|_{L^{\infty}(\Omega)}\leq C\|\phi\|_{C^{\alpha}(\partial D)}.
\end{equation}
\end{theorem}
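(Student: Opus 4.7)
The plan is to extract Theorem \ref{cor1} directly from Propositions \ref{prop_mainthm} and \ref{ubound}, supplemented by a maximum principle argument on the normal derivative $\partial_n u$. For $z\in\Omega_{\tilde R}$ I would substitute the oscillation bound \eqref{oscu} and the trivial domination $\|\phi\|^{*}_{C^{\alpha}(\Gamma^{-}_{s}(z))}\leq C\|\phi\|_{C^{\alpha}(\partial D)}$ (from \eqref{*norm1}) into the pointwise estimate of Proposition \ref{prop_mainthm}; this yields \eqref{upperbd} immediately. On the complement $\Omega\setminus\Omega_{\tilde R}$, the two boundaries $\partial D$ and $\partial D_1$ are separated by a universal positive distance, so the mixed-boundary problem is non-degenerate, and standard interior and boundary Schauder estimates for harmonic functions with $C^{\alpha}$ Neumann data give
\begin{equation*}
\|\nabla u\|_{L^{\infty}(\Omega\setminus\Omega_{\tilde R})}\leq C\Bigl(\|\phi\|_{C^{\alpha}(\partial D)}+\Bigl\|u-\textstyle\fint_{\Omega\setminus\Omega_{\tilde R/2}}u\Bigr\|_{L^{\infty}(\Omega\setminus\Omega_{3\tilde R/4})}\Bigr),
\end{equation*}
with the second term controlled by \eqref{uinfty}.

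It remains to prove the global bound \eqref{upperbdun}. Differentiating the harmonic equation shows that $\partial_n u$ is itself harmonic in $\Omega$, so by the maximum principle it suffices to estimate $\partial_n u$ on $\partial\Omega$. Along the upper boundary $\Gamma^{+}$, parametrized by $x_n=\varepsilon+f(x')$, the outward unit normal is $\nu=(-\nabla f,1)/\sqrt{1+|\nabla f|^2}$, so the homogeneous Neumann condition $\partial_\nu u=0$ simplifies to $\partial_n u=\nabla_{x'}u\cdot\nabla f(x')$. By \eqref{fg1} one has $|\nabla f(x')|\leq C|x'|$, and combining this with the estimate from Step 1 and the trivial inequality $|x'|/\sqrt{\varepsilon+|x'|^2}\leq 1$ yields $|\partial_n u|\leq C\|\phi\|_{C^{\alpha}(\partial D)}$ on $\Gamma^{+}\cap\Omega_{\tilde R}$. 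The lower boundary $\Gamma^{-}$ is treated identically, the only difference being an additional term of size $\|\phi\|_{L^\infty(\partial D)}$ produced by the inhomogeneous Neumann datum. Away from the narrow region Step 2 already bounds $|\partial_n u|\leq|\nabla u|$ directly, and patching the boundary estimates via the maximum principle yields \eqref{upperbdun}.

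The entire strategy is essentially bookkeeping, since all of the real analysis has been absorbed into Propositions \ref{prop_mainthm} and \ref{ubound}. The one substantive observation, and the only potential point of confusion, is the geometric cancellation in the treatment of $\partial_n u$: the $O(|x'|)$ vanishing of $\nabla f$ and $\nabla g$ at the origin precisely absorbs the $\eta(x')^{-1/2}$ gradient singularity, so that the normal component $\partial_n u$ stays bounded on $\partial\Omega$ even where the full gradient blows up. A minor technical point is verifying that the Schauder constants in Step 2 do not depend on $\varepsilon$, which follows because the geometry of $\Omega\setminus\Omega_{\tilde R}$ is $\varepsilon$-independent up to a $C^2$ perturbation of fixed size.
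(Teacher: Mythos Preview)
Your proposal is correct and follows essentially the same route as the paper: combine Propositions \ref{prop_mainthm} and \ref{ubound} for \eqref{upperbd}, apply standard elliptic regularity (the paper uses boundary $W^{2,p}$ estimates rather than Schauder, but the idea is identical) together with \eqref{uinfty} on $\Omega\setminus\Omega_{\tilde R}$, and then use the geometric cancellation $|\nabla f|,|\nabla g|\leq C|x'|$ plus the maximum principle for the harmonic function $\partial_n u$ to obtain \eqref{upperbdun}. The key observation you flag---that the $O(|x'|)$ vanishing of the boundary normals exactly kills the $\eta(x')^{-1/2}$ singularity---is precisely the mechanism the paper invokes.
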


We would like to point out that by slightly modifying our proof we can deal with the boundary estimates for the insulated problem with variable coefficients. We refer readers to \cite{DLY2}, the proof of Theorem 1.1 and discussion in Section 7  where they have shown the framework for reducing the variable coefficients case to the constant coefficients case \eqref{equinfty0}.

\begin{corollary}\label{cor11} 
Under the assumptions in Theorem \ref{cor1},

 (i) if $\phi(0')\neq0$, then $\underset{\Omega_{\tilde{R}}}\osc\,u$ may blow up at the rate of $|\ln\epsilon|$, that is, 
\begin{equation}\label{blowuposcu}
\underset{\Omega_{\tilde{R}}}\osc\,u\geq C|\phi(0')||\ln\epsilon|-C\|\phi\|_{C^{\alpha}(\partial D)},
\end{equation}
and
\begin{equation}\label{blowuposcu1}
\underset{\Omega_{\tilde{R}}}\osc\,u\leq C|\phi(0')||\ln\epsilon|+C\|\phi\|_{C^{\alpha}(\partial D)}.
\end{equation}

(ii) If $\phi(0')=0$, then the $\underset{\Omega_{\tilde R}}\osc\, u$ will not blow up. Furthermore, in this situation, the upper bound of $\nabla u$ can be improved. There exists a universal constant $\tilde \alpha\in(0,1)$ such that
$$|\nabla u(z)|\leq C\|\phi\|_{C^{\alpha}( \partial \Omega)} \delta(z')^{\frac{\tilde\alpha-1}{2}}
,\quad\mbox{for}~|z'|<\frac{1}{4}\bar{R}.$$
\end{corollary}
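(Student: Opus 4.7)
The plan is to reduce the analysis to the $(n-1)$-dimensional degenerate elliptic equation satisfied by the vertical average
\begin{equation*}
\bar u(x'):=\frac{1}{\delta(x')}\int_{g(x')}^{\varepsilon+f(x')} u(x',x_n)\,dx_n,\qquad x'\in B'_{\tilde R}.
\end{equation*}
Integrating $\Delta u=0$ in $x_n$ and invoking $\partial_{\nu}u=0$ on $\Gamma^{+}_{\tilde R}$ and $\partial_{\nu}u=\phi$ on $\Gamma^{-}_{\tilde R}$ yields, as in the reduction of \cite{DLY},
\begin{equation*}
\nabla_{x'}\!\cdot\!\bigl(\delta(x')\nabla_{x'}\bar u\bigr)=-\phi(x',g(x'))\sqrt{1+|\nabla g|^{2}}+\mathrm{l.o.t.},
\end{equation*}
where the lower order terms involve $\bar u$ and $\nabla_{x'}\bar u$ multiplied by $O(|x'|^{2})$ and $O(|x'|)$ respectively. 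Since $\delta\sim\eta=\varepsilon+|x'|^{2}$, the radial solution of the model equation $\nabla_{x'}\!\cdot\!(\eta\,\nabla_{x'}v)=-\phi(0')$ is $v(r)=-\frac{\phi(0')}{2(n-1)}\ln(\eta(r)/\varepsilon)$, of oscillation $\sim|\phi(0')||\ln\varepsilon|$. This single computation drives both (i) and (ii).

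For the lower bound in (i), I take spherical averages $\tilde u(r):=\fint_{\partial B'_{r}}\bar u\,d\sigma'$ and integrate the reduced equation once in $r$ to get
\begin{equation*}
\partial_{r}\tilde u(r)=-\frac{\phi(0')}{n-1}\cdot\frac{r}{\delta(r)}+O\!\left(\frac{r^{1+\alpha}}{\delta(r)}\right)\|\phi\|_{C^{\alpha}}+\mathrm{l.o.t.}
\end{equation*}
Integrating from $0$ to $\tilde R$ produces $|\tilde u(\tilde R)-\tilde u(0)|\geq c|\phi(0')||\ln\varepsilon|-C\|\phi\|_{C^{\alpha}}$, where the $O(r^{1+\alpha}/\delta)$ correction contributes only $O(1)\|\phi\|_{C^{\alpha}}$ because $\alpha>0$ makes $\int_0^{\tilde R}r^{\alpha-1}\,dr$ finite. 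Since the oscillation of $\bar u$ is controlled by that of $u$, \eqref{blowuposcu} follows. The lower order terms are absorbed using the pointwise gradient bound $|\nabla u|\leq C\eta^{-1/2}\|\phi\|_{C^{\alpha}}$ of Theorem \ref{cor1}.

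For part (ii), the source of the reduced equation satisfies $|\phi(x',g(x'))|\leq C|x'|^{\alpha}\|\phi\|_{C^{\alpha}}$, so the borderline obstruction $1+\gamma-2s=0$ noted in the introduction (which prevents Lemma 2.3 of \cite{DLY} from giving a uniform bound when $|\phi(0')|\sim 1$) is now strictly improved. A De Giorgi/Moser iteration together with a Harnack inequality near the origin and standard boundary Hölder estimates away from the cusp then yields the universal Hölder decay
\begin{equation*}
\underset{\Omega_{r}(z)}\osc\,u\leq C r^{\tilde\alpha}\|\phi\|_{C^{\alpha}},\qquad |z'|\leq r\leq \tfrac{1}{4}\bar R,
\end{equation*}
for some $\tilde\alpha\in(0,1)$; in particular the oscillation does not blow up. Substituting into Proposition \ref{prop_mainthm},
\begin{equation*}
|\nabla u(z)|\leq C\eta(z')^{-1/2}\Bigl(\underset{\Omega_{\sqrt{\eta}/4}(z)}\osc\,u+\|\phi\|^{*}_{C^{\alpha}(\Gamma^{-}_{\sqrt{\eta}/4}(z))}\Bigr)\leq C\delta(z')^{(\tilde\alpha-1)/2}\|\phi\|_{C^{\alpha}},
\end{equation*}
which is the improved gradient estimate in (ii).

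Finally, for \eqref{blowuposcu1} I split $\phi=\phi(0')\psi+\tilde\phi$, where $\psi\in C^{\alpha}(\partial D)$ is a fixed function with $\psi(0')=1$ and $\int_{\partial D}\psi=0$ (for example, a bump centered at $0'$ minus a bump supported away from $0'$, suitably scaled). By linearity $u=u_{1}+u_{2}$, where $u_{1}$ solves the problem with data $\phi(0')\psi$ (so $\|\phi(0')\psi\|_{C^{\alpha}}\leq C|\phi(0')|$) and $u_{2}$ solves the problem with data $\tilde\phi$ vanishing at $0'$. Applying Theorem \ref{cor1} to $u_{1}$ and integrating $|\nabla u_{1}|\leq C\eta^{-1/2}|\phi(0')|$ along a horizontal path from $\sqrt{\varepsilon}$ to $\tilde R$ gives $\osc u_{1}\leq C|\phi(0')||\ln\varepsilon|$; and part (ii) applied to $u_{2}$ gives $\osc u_{2}\leq C\|\phi\|_{C^{\alpha}}$. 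Adding, \eqref{blowuposcu1} follows. The main obstacle is the Moser iteration in (ii) at the degenerate exponent: because the weight $\delta$ degenerates at the origin and the source is only Hölder, the interior scheme of \cite{DLY} must be replaced by a local boundary estimate that is iterated across dyadic shells to produce a global decay with constants uniform as $\varepsilon\to 0$.
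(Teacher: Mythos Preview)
Your overall strategy---reduce to the $(n-1)$-dimensional equation for $\bar u$, extract a $\ln\delta$ behaviour from the constant source $\phi(0')$, and then feed oscillation decay back into Proposition~\ref{prop_mainthm}---matches the paper. The treatment of (ii) and of the upper bound \eqref{blowuposcu1} is essentially correct, though the paper obtains \eqref{blowuposcu1} more directly: it already has the decomposition $\bar u=u_1+u_2$ from the proof of Proposition~\ref{ubound} (with $u_2$ solving $\dv(\delta\nabla u_2)=\psi(0')$, $u_2|_{\partial B'_{\tilde R}}=0$) together with the barrier bounds $C^{-1}|\phi(0')|\,|\ln\delta|\le |u_2|\le C|\phi(0')|\,|\ln\delta|$ and the uniform bound $\|u_1\|_{L^\infty}\le C\|\phi\|_{C^\alpha}$, so no splitting of $\phi$ is needed.

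The real gap is in your lower bound argument for \eqref{blowuposcu}. You write $\delta(r)$ and derive an ODE for the spherical average $\tilde u(r)$, but $\delta(x')=\varepsilon+f(x')-g(x')$ is \emph{not radial}: on $\partial B'_r$ it equals $\varepsilon+\tfrac12 x'^{T}D^2(f-g)(0')x'+o(|x'|^2)$, and $D^2(f-g)(0')$ is not a multiple of the identity in general. When you take spherical averages you pick up an extra term
\[
\frac{1}{\bar\delta(r)}\fint_{\partial B'_r}\bigl(\delta-\bar\delta\bigr)\partial_r\bar u,
\qquad \bar\delta(r):=\fint_{\partial B'_r}\delta,
\]
and the only bounds available are $|\delta-\bar\delta|\le C r^2$ and $|\partial_r\bar u|\le C(\varepsilon+r^2)^{-1/2}\|\phi\|_{C^\alpha}$ from Theorem~\ref{cor1}. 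This gives an error of size $Cr^2(\varepsilon+r^2)^{-3/2}\|\phi\|_{C^\alpha}$, and
\[
\int_0^{\tilde R}\frac{r^2}{(\varepsilon+r^2)^{3/2}}\,dr\;\sim\;|\ln\varepsilon|,
\]
which is the \emph{same order} as the main term. Since $|\phi(0')|\le\|\phi\|_{C^\alpha}$, this error swallows the signal and no lower bound survives. (A secondary inaccuracy: the ``l.o.t.'' in the reduced equation are the fluxes $\tilde F^i$ of \eqref{DIVF}, which involve $\partial_n u$, not $\bar u$ or $\nabla_{x'}\bar u$.)

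The paper avoids this entirely by comparing $u_2$ directly with $\ln\delta(x')$ via the maximum principle: since $\dv(\delta\nabla\ln\delta)=\Delta\delta$ and $1/M\le\Delta\delta\le M$, the functions $M\psi(0')\ln\delta$ and $M^{-1}\psi(0')\ln\delta$ are super/subsolutions regardless of radial symmetry, giving $|u_2(0')|\ge C^{-1}|\phi(0')||\ln\varepsilon|$, and then $|\bar u(0')|\ge |u_2(0')|-\|u_1\|_{L^\infty}\ge C^{-1}|\phi(0')||\ln\varepsilon|-C\|\phi\|_{C^\alpha}$ yields \eqref{blowuposcu}.
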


By Corollary \ref{cor11} (ii) , it is shown that the boundary estimates and interior estimates are consistent when $\phi(0')=0$. Next, we show that the blow-up order $\varepsilon^{-1/2}$ in Theorem \ref{cor1} is optimal in the following sense.
 
\begin{theorem}\label{lowerbound}
For $n\geq3$ and $0<\varepsilon\ll1$, let $u$ be a solution to \eqref{equinfty0}. Suppose that $D_1$ and $D$ are of $C^{2,\gamma}$ satisfying \eqref{fg0}--\eqref{fg}.
If $\phi(0')\neq0$, then there exists a fixed universal constant $C_0$ such that for sufficiently small $\varepsilon$ satisfying $$  |\ln \varepsilon| \varepsilon^{\frac{\tau}{2}} \tilde R^{-\tau} \leq \frac{ |\phi(0')|}{\,\,\,\,C_0\|\phi\|_{C^{\alpha}(\partial D)}},$$
where $\tau\in(0,1)$ is a universal constant, we have 
\begin{equation*}
\|\nabla u\|_{L^{\infty}(\Omega)}\geq \frac{1}{C_0\sqrt\epsilon}|\phi(0')|.
\end{equation*}
\end{theorem}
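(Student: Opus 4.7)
The plan is to exploit the divergence structure of the problem: integrate $\Delta u=0$ over a cylindrical subregion $\Omega_r$ for a carefully chosen radius $r\sim\sqrt{\varepsilon}$ and use the Neumann boundary conditions to convert the flux balance into a lower bound for $M:=\|\nabla u\|_{L^{\infty}(\Omega)}$ in terms of $|\phi(0')|$. The point is that at scale $r\sim\sqrt{\varepsilon}$, the narrow region has thickness $\delta\sim\varepsilon$, so its lateral "surface-to-source" ratio is $\sqrt{\varepsilon}$ — exactly the desired blow-up order.

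\emph{Step 1 (flux balance).} Since $u$ is harmonic on $\Omega_r$, the divergence theorem gives
\[
0=\int_{\Omega_r}\Delta u=\int_{\Gamma_r^{+}}\partial_{\nu}u\,d\sigma+\int_{\Gamma_r^{-}}\partial_{\nu}u\,d\sigma+\int_{\Sigma_r}\partial_{\nu}u\,d\sigma,
\]
where $\Sigma_r:=\partial\Omega_r\setminus(\Gamma_r^{+}\cup\Gamma_r^{-})$ is the lateral side. Using $\partial_{\nu}u=0$ on $\Gamma_r^{+}\subset\partial D_1$ and $\partial_{\nu}u=\phi$ on $\Gamma_r^{-}\subset\partial D$, I obtain
\[
\int_{\Sigma_r}\partial_{\nu}u\,d\sigma=-\int_{B'_r}\phi(x',g(x'))\sqrt{1+|\nabla g(x')|^{2}}\,dx'.
\]

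\emph{Step 2 (quantitative estimates on both sides).} Because $\phi\in C^{\alpha}(\partial D)$ and $g\in C^{2}$ with $g(0')=0$, $\nabla g(0')=0$, one has $|\phi(x',g(x'))-\phi(0')|\le C\|\phi\|_{C^{\alpha}(\partial D)}|x'|^{\alpha}$ on $B'_r$, hence
\[
\Big|\int_{B'_r}\phi(x',g(x'))\sqrt{1+|\nabla g|^{2}}\,dx'\Big|\ge c_n|\phi(0')|r^{n-1}-C\|\phi\|_{C^{\alpha}(\partial D)}r^{n-1+\alpha}.
\]
On the lateral side, the parametrization $(r\omega,x_n)$ with $\omega\in S^{n-2}$ and $g(r\omega)\le x_n\le\varepsilon+f(r\omega)$ yields $|\Sigma_r|\le Cr^{n-2}(\varepsilon+r^{2})$, since $\delta(x')\sim\varepsilon+r^{2}$ on $|x'|=r$. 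Therefore
\[
\Big|\int_{\Sigma_r}\partial_{\nu}u\,d\sigma\Big|\le CM\,r^{n-2}(\varepsilon+r^{2}).
\]

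\emph{Step 3 (choice of $r$ and conclusion).} Combining Steps 1 and 2 and dividing by $r^{n-2}$ gives
\[
c_n|\phi(0')|r-C\|\phi\|_{C^{\alpha}(\partial D)}r^{1+\alpha}\le CM(\varepsilon+r^{2}).
\]
Setting $r=\sqrt{\varepsilon}$ (which is $\le\tilde R$ for small $\varepsilon$) and dividing by $\sqrt{\varepsilon}$ yields
\[
c_n|\phi(0')|-C\|\phi\|_{C^{\alpha}(\partial D)}\varepsilon^{\alpha/2}\le 2CM\sqrt{\varepsilon}.
\]
The smallness hypothesis on $\varepsilon$ — with $\tau\in(0,\alpha]$ — ensures $C\|\phi\|_{C^{\alpha}(\partial D)}\varepsilon^{\alpha/2}\le\tfrac{1}{2}c_n|\phi(0')|$, and the desired bound $M\ge|\phi(0')|/(C_0\sqrt{\varepsilon})$ follows after redefining $C_0$.

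\emph{Main obstacle.} The delicate part is ensuring that the $C^{\alpha}$ error in the source integral is strictly smaller than the main term $c_n|\phi(0')||B'_r|$ at the critical scale $r=\sqrt{\varepsilon}$; this is precisely what the smallness hypothesis buys. The slightly stronger form of the hypothesis stated in the theorem, involving the factor $|\ln\varepsilon|\tilde R^{-\tau}$, likely arises from an alternative implementation where the lateral flux on $\Sigma_r$ is split into a "near" piece and a "far" piece and estimated using the sharp gradient bound of Theorem \ref{cor1} — the logarithm then comes from integrating $(\varepsilon+|x'|^{2})^{-1/2}$ from $\sqrt{\varepsilon}$ to $\tilde R$ — but the essential mechanism remains the flux balance of Step 1.
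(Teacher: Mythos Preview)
Your flux-balance argument is correct and in fact considerably more elementary than the paper's own proof. The paper does not use the divergence theorem over $\Omega_r$; instead it works with the averaged function $\bar u(x')=\fint u(x',x_n)\,dx_n$, which solves the degenerate equation \eqref{barv2}, and subtracts off the explicit singular profile $\frac{\phi(0')}{\sum_i b^{ii}(0')}\ln\delta(x')$. The remainder $\tilde u=\bar u-\frac{\phi(0')}{\sum b^{ii}(0')}\ln\delta$ then satisfies a degenerate equation whose right-hand side vanishes at the origin to order $|x'|^{\alpha\gamma}$; applying the oscillation decay of Proposition \ref{nonzero}(ii) to $\tilde u$ gives $\osc_{B'_{\sqrt{\varepsilon}/4}}\tilde u\le C\|\phi\|_{C^\alpha}|\ln\varepsilon|\varepsilon^{\tau/2}\tilde R^{-\tau}$, and since $\osc_{B'_{\sqrt{\varepsilon}/4}}\ln\delta\sim1$, this forces $\osc_{B'_{\sqrt{\varepsilon}/4}}\bar u\gtrsim|\phi(0')|$, whence the lower bound on $\nabla u$ by the mean value theorem.

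The comparison is instructive. Your approach needs only $C^2$ boundaries, only the harmonicity of $u$ and the boundary conditions, and yields the result under the cleaner smallness condition $\varepsilon^{\alpha/2}\|\phi\|_{C^\alpha}\lesssim|\phi(0')|$ --- the extra factor $|\ln\varepsilon|\tilde R^{-\tau}$ and the $C^{2,\gamma}$ assumption in the stated theorem are artifacts of the paper's route through Proposition \ref{nonzero}(ii), not intrinsic to the lower bound. The paper's method, on the other hand, reuses machinery already built for the upper bound and identifies the precise logarithmic profile $\ln\delta(x')$ responsible for the blow-up, which is informative beyond the bare inequality. Your speculation in the final paragraph about where the $|\ln\varepsilon|$ comes from is off the mark: it arises from bounding $\|\tilde u\|_{L^\infty(\partial B'_{\tilde R})}$ via Proposition \ref{ubound} and Corollary \ref{cor11}, not from integrating the pointwise gradient bound.
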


Finally, for the Dirichlet problem \eqref{equinfty-0}, we have the boundedness of $|\nabla u|$.

\begin{theorem}\label{thDir}
For $n\geq3$ and $0<\varepsilon\ll1$, let $u$ be a solution to \eqref{equinfty-0}. Suppose that $D_1$ and $D$ satisfy \eqref{fg0}--\eqref{fg}. Then
$$\|\nabla u\|_{L^{\infty}(\Omega)}\leq C\|\varphi\|_{C^{1,\alpha}(\partial D )}.$$
\end{theorem}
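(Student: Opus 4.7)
The plan is to follow the scaling/iteration framework used to prove Proposition \ref{prop_mainthm}, but exploiting the Dirichlet condition $u = \varphi$ on $\partial D$, which provides much stronger local control of $u$ on the bottom $\Gamma^-$ of the narrow region than the Neumann data $\phi$ does for \eqref{equinfty0}. Away from the touching region, the distance between $\partial D$ and $\partial D_1$ is bounded below uniformly in $\varepsilon$, so standard mixed boundary elliptic regularity yields $\|\nabla u\|_{L^\infty(\Omega\setminus\Omega_{\bar{R}})} \le C\|\varphi\|_{C^{1,\alpha}}$, and the remaining task is to bound $\nabla u$ in $\Omega_{\bar{R}}$.

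For $z = (z',z_n) \in \Omega_{\bar{R}}$, I would first establish a Dirichlet analog of the gradient-to-oscillation estimate of Proposition \ref{prop_mainthm},
\[
|\nabla u(z)| \le C\, \eta(z')^{-1/2}\, \osc_{\Omega_{\frac{1}{4}\sqrt{\eta(z')}}(z)} u,
\]
whose proof mirrors that of Proposition \ref{prop_mainthm} via $W^{2,p}$ and boundary $C^{1,\mu}$ estimates after the natural anisotropic rescaling of Section 3, but with no Neumann-data term on the right since $\partial D$ carries only Dirichlet data.

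The new ingredient is the local oscillation bound
\[
\osc_{\Omega_{\frac{1}{4}\sqrt{\eta(z')}}(z)} u \le C \sqrt{\eta(z')}\, \|\varphi\|_{C^{1,\alpha}(\partial D)},
\]
which, combined with the previous display, immediately yields $|\nabla u(z)| \le C\|\varphi\|_{C^{1,\alpha}}$. To prove the oscillation bound I would subtract the constant $\varphi(z',g(z'))$: the resulting function $\tilde u := u - \varphi(z',g(z'))$ satisfies $|\tilde u| \le C\sqrt{\eta(z')}\|\varphi\|_{C^1}$ on $\Gamma^-_{\frac{1}{4}\sqrt{\eta(z')}}(z')$ by Taylor expansion of $\varphi$ along $\partial D$, is bounded by $2\|\varphi\|_{L^\infty}$ on all of $\Omega$ by the global maximum principle, and satisfies $\partial_\nu \tilde u = 0$ on $\Gamma^+$. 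A superharmonic barrier built from $(\varepsilon+f(x')-x_n)$, $-|x'-z'|^2$, and a small multiple of $\sqrt{\eta(z')}$ then propagates the sharp bottom bound into the slab while respecting the correct sign of the normal derivative on the top face.

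The main obstacle will be this oscillation bound: the zero trace of $\tilde u$ on $\Gamma^-$ must be propagated into the cusp uniformly in $\varepsilon$, matching simultaneously the Dirichlet control on the bottom, the Neumann sign condition on the top, and the bounded behavior on the lateral (artificial) boundary of $\Omega_{\frac{1}{4}\sqrt{\eta(z')}}(z)$. In contrast to the Neumann problem, where the oscillation may grow logarithmically (Corollary \ref{cor11}(i)), here the zero trace of $\tilde u$ on $\Gamma^-$ entirely precludes such growth and produces the uniform gradient bound, in analogy with the improved regime $\phi(0')=0$ of Corollary \ref{cor11}(ii).
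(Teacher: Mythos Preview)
Your approach is genuinely different from the paper's, and the barrier step contains a real gap.

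The paper does not go through an analog of Proposition~\ref{prop_mainthm} at all. Instead it extends $\varphi$ to $\Omega$, sets $w=u-\varphi$ so that $w=0$ on $\Gamma^-_R$, and runs a Caccioppoli iteration directly on $w$. The key point is that the zero Dirichlet trace on $\Gamma^-$ gives the Poincar\'e inequality
\[
\int_{\Omega_s(z)} w^2 \le C\,\eta(z')^{2}\int_{\Omega_s(z)}|\nabla w|^2,
\]
which plays the same role that the inequality $\int|v-\bar v|^2\le C\delta^2\int|\nabla(v-\bar v)|^2$ played in Lemma~\ref{lem_v-barv}. This yields
\[
\int_{\Omega_t(z)}|\nabla w|^2 \le \frac{C\eta(z')^2}{(s-t)^2}\int_{\Omega_s(z)}|\nabla w|^2 + Cs^{n-1}\eta(z'),
\]
and iterating from scale $\eta(z')$ out to scale $\sqrt{\eta(z')}$ gives $\int_{\Omega_{\eta(z')}(z)}|\nabla w|^2\le C\eta(z')^n$, after which rescaling plus $W^{2,p}$ finishes. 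No oscillation bound and no reduction to $\bar u$ is needed.

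Your barrier argument, as written, does not close. On the lateral face of $\Omega_{\frac14\sqrt{\eta(z')}}(z)$ you only have $|\tilde u|\le 2\|\varphi\|_{L^\infty}=O(1)$, and you want a supersolution $B$ with $B\ge O(1)$ there, $\partial_\nu B\le 0$ on $\Gamma^+$, $B\ge C\sqrt{\eta}$ on $\Gamma^-$, and $B(z)\le C\sqrt{\eta}$. A barrier that is a linear combination of $(\varepsilon+f(x')-x_n)$, $|x'-z'|^2$, and a constant cannot satisfy all of these: dominating the $O(1)$ lateral values forces the constant part of $B$ to be $O(1)$, which then forces $B(z)\ge O(1)$, not $O(\sqrt\eta)$. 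The true mechanism is that in a slab of thickness $\eta$ with Dirichlet data on one face, harmonic functions decay like $e^{-c|x'-z'|/\eta}$ from the lateral boundary; a single polynomial barrier does not see this, whereas the paper's iteration captures it in $L^2$ form. Separately, the claim that ``the proof mirrors that of Proposition~\ref{prop_mainthm}'' is optimistic: the averaging $\bar v=\fint v\,dy_n$ and the whole $v-\bar v$ machinery of Section~3 are tailored to Neumann data on both faces, and do not carry over unchanged to the mixed Dirichlet--Neumann situation.
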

 
This paper is organized as follows. In Section 2,  we provide some preliminary calculations. In Section 3, we demonstrate the proof of  Proposition \ref{prop_mainthm} by establishing the local $L^2$ estimates for $\nabla u$, providing the $L^{\infty}$ estimates for the gradient of the solution to the related equations and using the bootstrap argument. In section 4, we make use of Proposition \ref{prop_mainthm} and some results on elliptic equations with degenerate coefficients to prove Proposition \ref{ubound}. Here we employ the Harnack inequality and the local estimates near the boundary  to obtain the global and local property of the solution to the degenerate elliptic equation. We prove Theorem \ref{cor1}, Corollary \ref{cor11} and Theorem \ref{lowerbound} in Section 5. Especially, the auxiliary function plays a very important role in the proof of the lower bound estimate of the gradient. Finally, the proof of Theorem \ref{thDir} is presented in Section 6.

\section{Prelimilaries}

In this section we prove some preliminary results. We focus on the following problem on the narrow region:
\begin{equation}\label{equinfty01}
\begin{cases}
\Delta{u}=0 & \mbox{in}~ \Omega_{2R},\\
\partial_{\nu}u=0&\mbox{on}~\Gamma_{2R}^{+},\\
\partial_{\nu}u=\phi(x)&\mbox{on}~\Gamma_{2R}^{-}.
\end{cases}
\end{equation}
 By means of a change of variables
\begin{equation}\label{changevariables} 
\begin{cases}
y'=x',\\
y_n=2\delta(z')\left(\frac{x_n-g(x')}{\delta(x')}-\frac{1}{2}\right),\\
\end{cases}
\quad\mbox{for}~(x',x_n)\in \Omega_{2R},
\end{equation}
where $\delta(z')$ is defined in \eqref{deltaz}, we transform the narrow region $\Omega_{2R}$ to a cylinder $\mathtt{Q}_{\delta,2R}$, with a height of  $2\delta(z')$, where $\mathtt{Q}_{\delta,s}$ is defined as follows:
$$\mathtt{Q}_{\delta,s}(z')=\mathtt{Q}_{\delta,s}(z):=\{y=(y',y_n)\in\R^n\big|~ y'\in\,B'_{s}(z'), |y_n|< \delta(z')\},$$
with corresponding top and bottom boundaries
$$\mathtt{\Gamma}_{\delta,s}^{\pm}(z')=\mathtt{\Gamma}_{\delta,s}^{\pm}(z):=\{y=(y',y_n)\in\R^{n}\big|~y'\in\,B'_{s}(z'),y_n=\pm\delta(z')\}.$$
For simplicity of notations, we also use $\mathtt{Q}_{\delta,s}:=\mathtt{Q}_{\delta,s}(0)$ and $\mathtt{\Gamma}_{\delta,s}^{\pm}:=\mathtt{\Gamma}_{\delta,s}^{\pm}(0).$

Setting $v(y)=u(x)$, then $v(y)$ satisfies
\begin{equation}\label{equ_v1}
\begin{cases}
\partial_i(a^{ij}(y)\partial_{j}v(y))=0\quad &\mbox{in}\ \mathtt{Q}_{\delta,2R} ,\\
a^{nj}(y)\partial_jv(y)=0\quad&\mbox{on}\ \mathtt{\Gamma}_{\delta,2R}^{+} ,\\
-a^{nj}(y)\partial_jv(y)=\psi(y')\quad&\mbox{on}\ \mathtt{\Gamma}_{\delta,2R}^{-} ,
\end{cases}
\end{equation}
where 
$$\psi(y')=\phi(y',g(y'))\sqrt{1+|\nabla_{y'}g(y')|^{2}},\quad~~~\mbox{for}~ |y'|<2R.$$ 
The coefficient matrix $(a^{ij}(y))$ in \eqref{equ_v1} is given by
\begin{align*}
(a^{ij}(y))=&\, \frac{(\partial_x y)(\partial_x y)^t}{\det(\partial_xy)}\\
=&\,\frac{\delta(y')}{2\delta(z')}
\left(
\begin{array}{ccccc}
1 &0 &\cdots &0 &  e^1(y)\\
0 &1 &\cdots &0 &  e^2(y)\\
\vdots & \vdots &\ddots &\vdots & \vdots\\
0 &0 &\cdots &1 &  e^{n-1}(y)\\
e^1(y)  & e^2(y) &\cdots  & e^{n-1}(y)  &\sum_{i=1}^{n} e^i(y)^2
\end{array}
\right),
\end{align*}
where 
\begin{align}\label{def_ei}
e^i(y)=\partial_{y_i}g\frac{y_n-\delta(z')}{\delta(y')}-\partial_{y_i}f\frac{y_n+\delta(z')}{\delta(y')},~ 1\leq i\leq n-1,~
\mbox{and}~
e^n(y)=\frac{2\delta(z')}{\delta(y')}.
\end{align}
In the sequel, $\lambda$, $\Lambda$, $R$, $\bar{R}$, $\tilde{R}$, $C_{0}$ are some fixed universal constants, while the universal constant $C$ may vary from line to line. 

First, we have the following lemma.
\begin{lemma}\label{lem21}
There exist universal constants $\lambda,\Lambda$, and $C$, such that for $|z'|<R$ and for $y\in\mathtt{Q}_{\delta,\frac{1}{4}\sqrt{\eta(z')}}(z)$, the following estimates hold:
\begin{align}\label{ryz}
\frac{1}{8}\eta(y')\leq\eta(z')\leq8\eta(z')
\end{align}
\begin{align}\label{aii}
\lambda<|a^{ii}(y')|<\Lambda,\quad 1\leq i\leq n;
\end{align}
\begin{align}\label{ani1}
|a^{nj}(y)|=|a^{jn}(y)|\leq C\eta(z')^{1/2}, \quad 1\leq j\leq n-1;
\end{align}
and for $ 0<\mu<1$, 
\begin{align}\label{aii2}
 [a^{ii}]_{C^{\mu}(\mathtt{Q}_{\delta,\frac{1}{4}\sqrt{\eta(z')}}(z'))} \leq C\eta(z')^{-\mu/2},\quad 1\leq i\leq n-1,
\end{align}
and
\begin{align}\label{ani2}
 [a^{nj}]_{C^{\mu}(\mathtt{Q}_{\delta,\frac{1}{4}\sqrt{\eta(z'))}}(z)} \leq C\eta(z')^{\frac{1-2\mu}{2}}, \quad1\leq j\leq n-1.
\end{align}
\end{lemma}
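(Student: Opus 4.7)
All five estimates follow from the explicit form of the coefficient matrix together with three elementary ingredients valid throughout $\mathtt{Q}_{\delta,\frac{1}{4}\sqrt{\eta(z')}}(z')$: (a) $\delta(y')\sim\delta(z')\sim\eta(z')$, which is essentially \eqref{ryz}; (b) $|\nabla_{y'}f(y')|,|\nabla_{y'}g(y')|\leq C|y'|\leq C\sqrt{\eta(z')}$, by Taylor at the origin combined with \eqref{fg1} and the $C^2$ bounds on $f,g$; and (c) $|y_n|\leq\delta(z')\leq C\eta(z')$ by the definition of the cylinder. I would first establish \eqref{ryz} by directly comparing $|y'|^2$ to $|z'|^2$ using $|y'-z'|\leq\tfrac14\sqrt{\eta(z')}$ and $|z'|^2\leq\eta(z')$, then invoking $\delta\sim\eta$.

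For the $L^\infty$ bounds \eqref{aii}--\eqref{ani1}: the diagonal entries $a^{ii}=\delta(y')/(2\delta(z'))$, $1\leq i\leq n-1$, are comparable to $1$ by (a). For $a^{nn}$, observe that $(e^n)^2=(2\delta(z')/\delta(y'))^2\sim 1$ dominates the remaining $|e^j|^2\leq C\eta(z')$ terms for $j\leq n-1$, so $a^{nn}\sim 1$. For the off-diagonal $a^{nj}$ with $j\leq n-1$, the explicit formula simplifies (after the $\delta(y')$ cancellation) to
\begin{equation*}
a^{nj}(y)=\frac{y_n}{2\delta(z')}\bigl(\partial_{y_j}g(y')-\partial_{y_j}f(y')\bigr)-\frac{1}{2}\bigl(\partial_{y_j}g(y')+\partial_{y_j}f(y')\bigr),
\end{equation*}
and (b)--(c) then yield $|a^{nj}|\leq C\sqrt{\eta(z')}$.

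For the H\"older bound \eqref{aii2}, the estimate $|\nabla a^{ii}|=|\nabla(f-g)(y')|/(2\delta(z'))\leq C\eta(z')^{-1/2}$, together with the diameter $\sim\sqrt{\eta(z')}$ of the cylinder and the standard inequality $[h]_{C^\mu}\leq\|\nabla h\|_{L^\infty}(\mathrm{diam})^{1-\mu}$ on convex sets, delivers exactly $C\eta(z')^{-\mu/2}$.

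The main obstacle is \eqref{ani2}: the naive gradient estimate gives only $C\eta(z')^{-\mu/2}$, whereas the target $C\eta(z')^{(1-2\mu)/2}$ is strictly smaller when $\eta(z')\ll 1$ and $\mu<1$. The extra factor $\eta(z')^{1/2}$ must be extracted from the anisotropy of the cylinder, whose height $\sim\eta(z')$ is much smaller than the base radius $\sim\sqrt{\eta(z')}$. Using the simplified formula above, I would split
\begin{equation*}
a^{nj}(y_1)-a^{nj}(y_2)=\bigl[a^{nj}(y_1)-a^{nj}(y_2', y_{1,n})\bigr]+\bigl[a^{nj}(y_2',y_{1,n})-a^{nj}(y_2)\bigr].
\end{equation*}
The first (pure-$y'$) difference is controlled by $|\nabla_{y'}(\partial_{y_j}g)|,|\nabla_{y'}(\partial_{y_j}f)|\leq C$ and produces the harmless contribution $C\eta(z')^{(1-\mu)/2}$. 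The critical (pure-$y_n$) difference equals $\frac{y_{1,n}-y_{2,n}}{2\delta(z')}\bigl(\partial_{y_j}g(y_2')-\partial_{y_j}f(y_2')\bigr)$; applying $|\partial_{y_j}g(y_2')-\partial_{y_j}f(y_2')|\leq C\sqrt{\eta(z')}$ by (b) and $|y_{1,n}-y_{2,n}|\leq 2\delta(z')\leq C\eta(z')$ by (c), dividing by $|y_1-y_2|^\mu$, and bounding $|y_{1,n}-y_{2,n}|^{1-\mu}\leq C\eta(z')^{1-\mu}$, yields precisely the claimed $C\eta(z')^{(1-2\mu)/2}$. Summing the two contributions and absorbing the smaller term finishes the proof.
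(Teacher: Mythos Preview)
Your proposal is correct and follows essentially the same argument as the paper: the paper likewise reduces \eqref{aii}--\eqref{aii2} to the pointwise bounds $\delta(y')\sim\delta(z')$, $|\nabla f|,|\nabla g|\leq C|y'|\leq C\sqrt{\eta(z')}$, and for the critical estimate \eqref{ani2} uses the identical simplified formula $a^{nj}=\partial_{y_j}g\,\frac{y_n-\delta(z')}{2\delta(z')}-\partial_{y_j}f\,\frac{y_n+\delta(z')}{2\delta(z')}$ together with the same anisotropic split into a pure-$y'$ difference (controlled by $|\nabla_{y'}a^{nj}|\leq C$) and a pure-$y_n$ difference (controlled by $|\partial_{y_n}a^{nj}|\leq C\eta(z')^{-1/2}$ and $|\bar y_n-\tilde y_n|^{1-\mu}\leq C\eta(z')^{1-\mu}$). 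The only cosmetic difference is that the paper handles \eqref{ryz} by a case split $|z'|\gtrless 2\sqrt{\varepsilon}$ rather than a single inequality chain.
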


\begin{proof}
Without loss of generality, we assume that for $|z'|<R$, there holds 
\begin{align}\label{fg_R}
R\,\Big(\|f\|_{C^{2 }(B'_{R}(0))}+\|g\|_{C^{2 }(B'_{R}(0))}\Big)<1/4.
\end{align}
Hence this $R$ depends only on the upper bound of the $C^{2}$ norms of $\partial{D}_{1}$ and $\partial{D}$. By a direct calculation, for $|z'|<R$ and $y\in\mathtt{Q}_{\delta,\frac{1}{4}\sqrt{\eta(z')}}(z)$,
$$\eta(y')\leq\epsilon+2(|z'|^{2}+|y'-z'|^{2})\leq(2+\frac{1}{4})\eta(z')\leq8\eta(z').$$
On the other hand, for the lower bound, if $|z'|\geq2\sqrt{\epsilon}$, then $|y'|\geq|z'|-|y'-z'|\geq|z'|-\frac{1}{2}|z'|\geq\frac{1}{2}|z'|$, thus
$$\eta(y')\geq\epsilon+\frac{1}{4}|y'|^{2}\geq\frac{1}{8}\eta(z'),$$
while if $|z'|\leq2\sqrt{\epsilon}$, then $\frac{\eta(y')}{\eta(z')}\geq\frac{\epsilon}{8\epsilon}\geq\frac{1}{8}$. Therefore, for $|z'|<R$ and $y\in\mathtt{Q}_{\delta,\frac{1}{4}\sqrt{\eta(z')}}(z)$, we have
\begin{align}\label{ryz1}
 \frac{1}{8}\leq\frac{\eta(z')}{\eta(y')}\leq8.
\end{align}
Combining the fact that $\delta(x')\sim\eta(x')$ and \eqref{ryz1}, we have 
\begin{equation}
\lambda<a^{ii}(y')=\frac{\delta(y')}{2\delta(z')}<\Lambda, \quad \mbox{for} 1\leq i\leq n-1.
\end{equation}

Because $|\nabla f(0')|=|\nabla g(0')|=0$, then using the mean value theorem yields
$$
|\nabla f(y')|\leq \|\nabla^2f\|_{L^{\infty}(B'_{2R}(0))}|y'|\leq C|y'|,\quad \mbox{and}\quad |\nabla g(y')|\leq C|y'|.$$
Hence, for $y\in\mathtt{Q}_{\delta,\frac{1}{4}\sqrt{\eta(z')}}(z)$,
\begin{equation}\label{eiy}
|e_i(y)|\leq C(|\nabla g(y)|+|\nabla f(y)|)\leq\,C|y'|\leq\,C(|z'|+|y'-z'|)\leq\,C\eta(z')^{1/2},
\end{equation}
and thus, $\sum_{i=1}^{n-1}(e_i)^2\leq C\eta(z')$. Therefore, using \eqref{ryz1} again,
$$\lambda<\,e_{n}(y)\leq\,a^{nn}=\frac{\delta(y')}{2\delta(z')}\sum_{i=1}^{n-1}(e_i)^2+\frac{2\delta(z')}{\delta(y')}<\Lambda.$$
While, for $1\leq\,j\leq n-1$, by virtue of \eqref{eiy},
$$|a^{nj}(y)|=\frac{\delta(y')}{2\delta(z')}|e_j(y)|\leq\,C\eta(z')^{1/2}.$$
So, \eqref{aii} and \eqref{ani1} hold.

For $|z'|<R$ and $y\in\mathtt{Q}_{\delta,\frac{1}{4}\sqrt{\eta(z')}}(z)$, since 
\begin{equation}\label{ndeltay}
|\partial_{y_{i}}\delta(y)|\leq\,\|\nabla^{2}(f-g)\|_{L^{\infty}(B'_{R}(0))}|y'|\leq\,C|y'|\leq\,C\eta(z')^{1/2},
\end{equation}
it follows that, for $1\leq\,i\leq n-1$, 
$$[a^{ii}]_{C^{\mu}(Q_{\delta , \frac{1}{4}\sqrt{\eta(z')} }(z))}=\frac{1}{2\delta(z')}[\delta(y)]_{C^{\mu}(Q_{\delta , \frac{1}{4}\sqrt{\eta(z')} }(z))}\leq C\eta(z')^{-\frac{\mu}{2}}.$$
That is, \eqref{aii2} holds true.

For $a^{nj}(y),~ 1\leq j\leq n-1$, since
$$a^{nj}(y)=\partial_{y_j}g\frac{y_n-\delta(z')}{2\delta(z')}-\partial_{y_j}f\frac{y_n+\delta(z')}{2\delta(z')},$$
it is direct to check that
$$|\nabla_{y'}a^{nj}(y)|\leq\,C,\quad |\partial_{y_n}a^{nj}(y)|\leq C\eta(z')^{-\frac{1}{2}}$$
So that, for $\bar y, \tilde y\in Q_{\delta , \frac{1}{4}\sqrt{\eta(z')} }(z)$, we have $|\tilde y-\bar y|\leq C\eta(z')^{1/2}$, and
\begin{equation*}
\begin{aligned}
 \frac{|a^{nj}(\bar y)-a^{nj}(\tilde y)|}{|\bar y-\tilde y|^{\mu}} 
\leq&\, \frac{|a^{nj}(\bar y',\bar y_n)-a^{nj}(\bar y',\tilde y_n)|}{|\bar y-\tilde y|^{\mu}} +\frac{|a^{nj}(\bar y',\tilde y_n)-a^{nj}(\tilde y',\tilde y_n)|}{|\bar y-\tilde y|^{\mu}}\\
\leq&\, |\partial_{y_{n}}a^{nj}||\bar y_n-\tilde y_n|^{1-\mu}+|\nabla_{y'}a^{nj}||\bar y'-\tilde y'|^{1-\mu}\\
\leq&\, C\eta(z')^{-1/2}\eta(z')^{1-\mu}+C\eta(z')^{\frac{1-\mu}{2}}\leq C\eta(z')^{\frac{1-2\mu}{2}}.
\end{aligned}
\end{equation*}
Thus, \eqref{ani2} is proved. The proof of Lemma \ref{lem21} is completed.
\end{proof}

We now define the norms:
\begin{align*}
\|\nabla v\|_{C^{\mu}(\mathtt{Q}_{\delta,s}(z'))}^*&:=\|\nabla v\|_{L^{\infty}(\mathtt{Q}_{\delta,s}(z'))}+\eta(z')^{\mu}[\nabla v]_{C^{\mu}(\mathtt{Q}_{\delta,s}(z'))},\end{align*}
and
\begin{align*}
\|\nabla u\|_{C^{\mu}(\Omega_{r}(z))}^*&:=\|\nabla u\|_{L^{\infty}(\Omega_{r}(z))}+\eta(z')^{\mu}[\nabla u]_{C^{\mu}(\Omega_{r}(z))}.
\end{align*}
Then
\begin{lemma}\label{uvsqrtdelta}
 For $|z'|<R$ and for $0\leq s\leq \frac{1}{4}\sqrt{\eta(z')}$,  
\begin{align}
\|\nabla v\|_{C^{\mu}(\mathtt{Q}_{\delta, s}(z))}^*\sim \|\nabla u\|_{C^{\mu}(\Omega_{s}(z))}^*.
\end{align}
 \end{lemma}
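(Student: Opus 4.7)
The proof reduces to careful bookkeeping for the change of variables \eqref{changevariables} together with its inverse
$$x'=y',\qquad x_n=g(y')+\delta(y')\left(\frac{y_n+\delta(z')}{2\delta(z')}\right).$$
First I would compute the Jacobian $\partial_x y$: the upper-left $(n-1)\times(n-1)$ block is the identity, $\partial_{x_n}y_n=2\delta(z')/\delta(x')$, and for $1\leq i\leq n-1$, $\partial_{x_i}y_n=-2\delta(z')\partial_i g(x')/\delta(x')-2\delta(z')(x_n-g(x'))\partial_i\delta(x')/\delta(x')^2$. Using $\delta(x')\sim\delta(z')\sim\eta(z')$ on $\Omega_s(z)$ (which follows from \eqref{ryz}), together with $|\partial_i g|,|\partial_i\delta|\lesssim|x'|\lesssim\eta(z')^{1/2}$ (as in \eqref{eiy}, \eqref{ndeltay}) and $|x_n-g(x')|\leq\delta(x')$, every entry of $\partial_x y$ is uniformly $O(1)$. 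The same bounds hold for the inverse Jacobian $\partial_y x$. Hence the map is bi-Lipschitz with constants independent of $z'$, so $|y-\tilde y|\sim|x-\tilde x|$ for corresponding pairs in $\mathtt{Q}_{\delta,s}(z)$ and $\Omega_s(z)$.

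Since $v(y)=u(x)$, the chain rule yields $\nabla_x u=(\partial_x y)^T\nabla_y v$, and invertibility gives $\nabla_y v=((\partial_x y)^T)^{-1}\nabla_x u$. The uniform bounds on both Jacobians then force $|\nabla u(x)|\sim|\nabla v(y(x))|$ pointwise, producing the equivalence of the $L^\infty$ parts of the two $\ast$-norms. The weight $\eta(z')^\mu$ in the definition is the same on both sides, since it depends only on the fixed base point $z'$, which is preserved by the transformation.

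For the weighted Hölder seminorm I would write
$$\nabla u(x)-\nabla u(\tilde x)=(\partial_x y)^T(x)\bigl[\nabla v(y)-\nabla v(\tilde y)\bigr]+\bigl[(\partial_x y)^T(x)-(\partial_x y)^T(\tilde x)\bigr]\nabla v(\tilde y),$$
and bound the second term via a $C^\mu$ estimate on the Jacobian itself. Differentiating the formulas above and arguing exactly as in \eqref{aii2}--\eqref{ani2} gives $[\partial_x y]_{C^\mu(\Omega_s(z))}\lesssim\eta(z')^{-\mu/2}$, so when multiplied by $\eta(z')^\mu$ this contribution is controlled by $\eta(z')^{\mu/2}\|\nabla v\|_{L^\infty}\lesssim\|\nabla v\|_{L^\infty}$. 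Combining with the bi-Lipschitz change of distances in the first term yields
$$\eta(z')^\mu[\nabla u]_{C^\mu(\Omega_s(z))}\lesssim\eta(z')^\mu[\nabla v]_{C^\mu(\mathtt{Q}_{\delta,s}(z))}+\|\nabla v\|_{L^\infty(\mathtt{Q}_{\delta,s}(z))},$$
and the reverse inequality is proved identically by using $\partial_y x$ instead. Adding the $L^\infty$ equivalence finishes the proof. The only genuinely technical point is controlling the extra term produced by the non-constant Jacobian in the Hölder estimate, but the required regularity $[\partial_x y]_{C^\mu}\lesssim\eta(z')^{-\mu/2}$ is precisely of the form already established in Lemma \ref{lem21}, so no new idea is needed beyond those computations.
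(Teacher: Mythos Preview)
Your proposal is correct and follows essentially the same approach as the paper: both establish the bi-Lipschitz property of the change of variables, use the chain rule to compare $\nabla u$ and $\nabla v$ pointwise, and control the extra term in the H\"older difference via a $C^\mu$ bound on the Jacobian of order $\eta(z')^{-\mu/2}$. The only cosmetic difference is that the paper verifies $|\bar x-\tilde x|\sim|\bar y-\tilde y|$ by a direct distance computation (exploiting $x'=y'$ and expanding $\bar x_n-\tilde x_n$), whereas you deduce it from uniform Jacobian bounds; and the paper explicitly computes $|\nabla^2_y x_n|\leq C\eta(z')^{-1/2}$ to obtain the Jacobian's H\"older regularity, which is exactly what your reference to the computations in Lemma~\ref{lem21} amounts to.
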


\begin{proof}
Under transform \eqref{changevariables}, we write $\bar y=y(\bar x)$ and $\tilde y=y(\tilde x)$. We claim that 
\begin{equation}\label{rxy}
|\tilde x-\bar x|\sim |\tilde y- \bar y|,\quad\mbox{for}~\tilde x,\bar x\in\Omega_{s}(z),~s\leq \frac{1}{4}\sqrt{\eta(z')}.
\end{equation}
Indeed, it suffices to prove that
\begin{equation}\label{rxy1}
|\tilde x'-\bar x'|+|\tilde x_n-\bar x_n|\sim |\tilde y'-\bar y'|+|\tilde y_n-\bar y_n|.
\end{equation}
It is clear that $|\tilde x'-\bar x'|=|\tilde y'-\bar y'|$. We only need to estimate $|\tilde x_n-\bar x_n|$. For this, by definitions,
\begin{equation}\label{xn}
\begin{aligned}
\bar x_n-\tilde x_n 
=&\,\Big((\frac{\bar y_n}{2\delta(z')}+\frac{1}{2})\delta(\bar y')+g(\bar y')-\frac{\epsilon}{2}\Big)-\Big((\frac{\tilde y_n}{2\delta(z')}+\frac{1}{2})\delta(\tilde y')+g(\tilde y')-\frac{\epsilon}{2}\Big) \\
=&\,\frac{\bar y_n}{2\delta(z')}\delta(\bar y')-\frac{\tilde y_n}{2\delta(z')}\delta(\tilde y')+\frac{1}{2}(f(\bar y')+g(\bar y'))-\frac{1}{2}(f(\tilde y')+g(\tilde y')) \\
=&\,\frac{\delta(\bar y')}{2\delta(z')}(\bar y_n-\tilde y_n)+\frac{\tilde y_n}{2\delta(z')}(\delta(\bar y')-\delta(\tilde y'))+\frac{1}{2}((f+g)(\bar y')-(f+g)(\tilde y')).
\end{aligned}
\end{equation}
By means of the assumption \eqref{fg_R}, we have
$$|f(\bar y')-f(\tilde y')|\leq|\nabla f(\theta\bar y'+(1-\theta)\tilde y')||\bar y'-\tilde y'|\leq\|\nabla^{2} f\|_{L^{\infty}(B'_{R}(0))}R_{1}|\bar y'-\tilde y'|\leq\frac{1}{4}|\bar y'-\tilde y'|,$$
$$|g(\bar y')-g(\tilde y')|\leq\frac{1}{4}|\bar y'-\tilde y'|,\quad\mbox{and}~
|\delta(\bar y')-\delta(\tilde y')|\leq\,\frac{1}{4}|\bar y'-\tilde y'|.$$
This, together with \eqref{ryz} and \eqref{xn}, yields
\begin{equation*}
\begin{aligned}
|\bar x_n-\tilde x_n|\leq&\, C|\bar y_n-\tilde y_n|+\frac{1}{2}|\bar y'-\tilde y'|,\quad\mbox{and}~
|\bar x_n-\tilde x_n|\geq C|\bar y_n-\tilde y_n|-C|\bar y'-\tilde y'|.
\end{aligned}
\end{equation*}
Thus, \eqref{rxy1} is proved.

By the chain rule, $\partial_{y_{i}}v(y)=\partial_{x_{i}}u(x)+\partial_{x_{n}}u(x)\partial_{y_{i}}x_{n}$, we have
\begin{align*}
\frac{|\partial_{y_{i}}v(\bar{y})-\partial_{y_{i}}v(\tilde{y})|}{|\bar{y}-\tilde{y}|^{\mu}}
\leq&\,\frac{|\partial_{x_{i}}u(\bar{x})-\partial_{x_{i}}u(\tilde{x})|}{|\bar{y}-\tilde{y}|^{\mu}}
+\frac{|\partial_{x_{n}}u(\bar{x})\partial_{y_{i}}x_{n}(\bar{y})-\partial_{x_{n}}u(\tilde{x})\partial_{y_{i}}x_{n}(\tilde{y})|}{|\bar{y}-\tilde{y}|^{\mu}}.
\end{align*}
To estimate it, we need to calculate $\nabla^{2}_{y}\,x$. For $1\leq i\leq n-1$, $1\leq i\leq n-1$, 
\begin{equation*}
\begin{aligned}
\partial_{y_i}x_{n}=&\,\Big(\frac{y_n}{2\delta(z')}+\frac{1}{2}\Big)\partial_{y_i}\delta(y')+\partial_{y_i}g(y'),\\
\partial^2_{y_{i}y_{j}}x_n=&\,\Big(\frac{y_n}{2\delta(z')}+\frac{1}{2}\Big)\partial^2_{y_{i}y_{j}}\delta(y')+\partial^2_{y_{i}y_{j}} g(y'),\\
\partial_{y_{n}} x_n=&\,\frac{\delta(y')}{2\delta(z')},\quad\mbox{and}~
\partial^2_{y_{n}y_{j}} x_n=\,\frac{1}{2\delta(z')}\partial_{y_{j}}\delta(y').
\end{aligned}
\end{equation*}
Because $|\nabla f(y')|+|\nabla g(y')|\leq C|y'|\leq C\sqrt{\eta(y')}\leq C\sqrt{\eta(z')}$, we have   
\begin{equation}\label{xny1}
|\nabla x_n(y)|\leq\,C,\quad |\nabla^2 x_n(y)|\leq C\eta(z')^{-1/2}.
\end{equation}
This implies 
\begin{equation}\label{xny2}
[\nabla x_n(y)]_{C^{\mu}(\mathtt{Q}_{\delta,s}(z))}\leq \|\nabla^2 x_n(y)\|_{L^{\infty}(\mathtt{Q}_{\delta,s}(z))}\sqrt{\eta(z')}^{1-\mu}.
\end{equation}
Thus, by using \eqref{rxy}, \eqref{xny1}, \eqref{xny2} and the chain rule, we have
 $$\|\nabla v\|_{C^{\mu}(\mathtt{Q}_{\delta, s}(z))}^*\leq C \|\nabla u\|_{C^{\mu}(\Omega_{s}(z))}^*.$$ 
The other direction is similar. Thus, Lemma \ref{uvsqrtdelta} is proved. 
\end{proof}
Similar as \eqref{*norm1}, we define
$$\|\psi\|_{C^{\alpha}(\mathtt{\Gamma}^{-}_{\delta, s}(z))}^*:=\|\psi\|_{L^{\infty}(\mathtt{\Gamma}^{-}_{\delta, s}(z))}+\eta(z')^{\alpha}[\psi]_{C^{\alpha}(\mathtt{\Gamma}^{-}_{\delta, s}(z))}.$$
By a direct calculation, it is easy to see  that 
\begin{equation}
\|\phi\|_{C^{\alpha}(\Gamma^{-}_{s}(z))}^*\sim \|\psi\|_{C^{\alpha}(\mathtt{\Gamma}^{-}_{\delta, s}(z))}^*, \quad\mbox{for}~ s>0.
\end{equation}

\section{proof of Proposition \ref{prop_mainthm}}\label{section3}

In this section we are dedicated to proving Proposition \ref{prop_mainthm}. As mentioned in Section 1, the proofs of interior estimates for insulated conductivity problem do not directly apply to the boundary case. Our strategies in this section are as follows.  Firstly, we establish the estimate $\nabla (u-\bar u)$ in the $L^2$ sense by using an iteration technique, where $\bar u$ is defined as \eqref{barequ_v11} below. Then by utilizing the $L^{\infty}$ estimate for $\nabla\bar u$ and employing a bootstrap argument, we improve the pointwise upper bound for $\nabla u$ from $\eta(x')^{-1}$ to $\eta(x')^{-1/2}$.

\subsection{$C^{\mu}$ estimates of $\nabla v$}

\begin{lemma}\label{lem_estv}
Let $v$ be the solution to \eqref{equ_v1}. Then for $|z'|<R$, and for some $\mu\in(0,\frac{1}{2}]$, we have
\begin{equation}\label{vdeltabarv}
\|\nabla v\|_{C^{\mu}(\mathtt{Q}_{\delta,\eta(z')/16}(z))} ^*\leq  C\eta(z')^{-n/2}\|\nabla v\|_{L^2(\mathtt{Q}_{\delta,\eta(z')/8}(z))}+ C\|\psi\|_{C^{\alpha}(\mathtt{\Gamma}^{-}_{\delta, \eta(z')/8}(z))}^*,
\end{equation}
and  
 \begin{equation}\label{vdelta}
\|\nabla v\|_{C^{\mu}(\mathtt{Q}_{\delta,\eta(z')/16}(z))} ^*\leq C\eta(z')^{-1}\underset{\mathtt{Q}_{\delta,\eta(z')/8}(z)}\osc~ v +C\|\psi\|_{C^{\alpha}(\mathtt{\Gamma}^{-}_{\delta,  \eta(z')/8}(z))}^*.
\end{equation}

\end{lemma}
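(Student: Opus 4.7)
The plan is to reduce both estimates to a boundary $C^{1,\mu}$ Schauder estimate on a unit-size cylinder via an affine rescaling, and to deduce \eqref{vdelta} from \eqref{vdeltabarv} through a Caccioppoli-type energy bound for $v-c$.

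For \eqref{vdeltabarv}, fix $z'\in B'_R$, set $\eta:=\eta(z')$, and perform the dilation $\tilde y=(y-(z',0))/\eta$. Because $\delta(z')\sim\eta$, this maps $\mathtt{Q}_{\delta,\eta/8}(z)$ onto a cylinder $\tilde Q$ of universally bounded aspect ratio. Set $\tilde v(\tilde y):=v(y)$ and $\tilde a^{ij}(\tilde y):=a^{ij}(y)$. By Lemma \ref{lem21}, $(\tilde a^{ij})$ is uniformly elliptic on $\tilde Q$ for all small $\eta$: the diagonal entries stay in $[\lambda,\Lambda]$ with rescaled $C^\mu$ seminorm at most $\eta^{\mu}\cdot C\eta^{-\mu/2}=C\eta^{\mu/2}$, while the off-diagonal entries $\tilde a^{nj}$ are of size $O(\eta^{1/2})$ both in $L^\infty$ and in $C^\mu$ (via the exponent identity $\eta^{\mu}\cdot\eta^{(1-2\mu)/2}=\eta^{1/2}$). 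The Neumann condition on $\tilde\Gamma^-$ becomes $-\tilde a^{nj}\partial_{\tilde y_j}\tilde v=\tilde\psi$ with $\tilde\psi(\tilde y'):=\eta\,\psi(y')$, so $\|\tilde\psi\|_{C^\alpha(\tilde\Gamma^-)}=\eta\|\psi\|^*_{C^\alpha(\mathtt{\Gamma}^-_{\delta,\eta/8}(z))}$, while $\tilde\Gamma^+$ remains zero-Neumann.

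Now apply the standard boundary $C^{1,\mu}$ Schauder estimate for divergence-form uniformly elliptic equations with $C^\mu$ coefficients and $C^\mu$ Neumann data on the flat top and bottom faces of $\tilde Q$:
\[
\|\nabla\tilde v\|_{L^\infty(\tilde Q_{1/2})}+[\nabla\tilde v]_{C^\mu(\tilde Q_{1/2})}\leq C\bigl(\|\nabla\tilde v\|_{L^2(\tilde Q)}+\|\tilde\psi\|_{C^\alpha(\tilde\Gamma^-)}\bigr).
\]
Reverting to $y$-coordinates via $\nabla_y v=\eta^{-1}\nabla_{\tilde y}\tilde v$, $[\nabla_y v]_{C^\mu}=\eta^{-1-\mu}[\nabla_{\tilde y}\tilde v]_{C^\mu}$, and $\|\nabla_{\tilde y}\tilde v\|_{L^2(\tilde Q)}=\eta^{1-n/2}\|\nabla_y v\|_{L^2(\mathtt{Q}_{\delta,\eta/8}(z))}$, and dividing through by $\eta$, produces \eqref{vdeltabarv}.

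For \eqref{vdelta}, run a cutoff Caccioppoli on a slightly larger cylinder: testing \eqref{equ_v1} against $(v-c)\zeta^2$, with $c$ the mean of $v$ and $\zeta$ equal to $1$ on the inner cylinder with $|\nabla\zeta|\leq C\eta^{-1}$, and absorbing the Neumann contribution $\int_{\mathtt{\Gamma}^-}\psi(v-c)\zeta^2$ by Young's inequality, gives
\[
\|\nabla v\|_{L^2(\mathtt{Q}_{\delta,\eta/8}(z))}\leq C\eta^{(n-2)/2}\underset{\mathtt{Q}_{\delta,\eta/8}(z)}\osc\,v+C\eta^{n/2}\|\psi\|^*_{C^\alpha(\mathtt{\Gamma}^-_{\delta,\eta/8}(z))},
\]
and substituting into \eqref{vdeltabarv} together with $\eta^{-n/2}\cdot\eta^{n/2}=1$ yields \eqref{vdelta}. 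The principal obstacle is verifying the uniformity of the Schauder constants in the rescaled picture: although the unrescaled $C^\mu$ seminorm of the off-diagonal coefficient $a^{nj}$ is of order $\eta^{(1-2\mu)/2}$ (which blows up when $\mu>1/2$), the cancellation $\eta^{\mu}\cdot\eta^{(1-2\mu)/2}=\eta^{1/2}$ on the unit cylinder is exactly what renders the off-diagonal perturbation uniformly small. This is the quantitative engine behind the choice of rescaling factor $\eta$ (rather than $\sqrt{\eta}$) in both spatial directions, and, together with the boundedness of $\delta(z')/\eta$, ensures every Schauder constant on $\tilde Q$ is independent of $\epsilon$.
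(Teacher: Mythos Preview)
Your proposal is correct and follows the same rescaling-plus-regularity strategy as the paper. The paper differs only in implementation: it uses an anisotropic rescaling (horizontal factor $\eta(z')/8$, vertical factor $\delta(z')$) and reaches the $C^{1,\mu}$ bound through a $W^{2,p}$ bootstrap with Sobolev embedding (Theorem~6.27 in \cite{L1}) rather than a direct Schauder estimate; for \eqref{vdelta} it bypasses Caccioppoli entirely and just bounds $\|\hat v-a\|_{L^{2}}\le C\,\osc\,v$ in the same bootstrap. One minor wrinkle in your Caccioppoli step: the cutoff must live on a strictly larger cylinder than $\mathtt{Q}_{\delta,\eta/8}(z)$, so the oscillation you obtain is over that larger set, not the one stated in \eqref{vdelta}; this is harmless and is fixed either by first proving \eqref{vdeltabarv} with slightly shrunk radii, or by following the paper and avoiding Caccioppoli altogether.
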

\begin{proof}
We first rescale domain $\mathtt{Q}_{\delta,\eta(z')/8}(z)$ to a unit size. Set
\begin{align*}
\hat{a}^{ij}(w):=&a^{ij}(z'+ \frac{\eta(z')}{8}w',\delta(z')w_n),\quad 1\leq\,i,j\leq n-1,\\
 \hat{a}^{in}(w):=&\frac{\eta(z')}{8\delta(z')}a^{in}(z'+ \frac{\eta(z')}{8}w',\delta(z')w_n),\\
 \hat{a}^{nn}(w):=&\frac{\eta(z')^2}{64\delta(z')^2}a^{nn}(z'+ \frac{\eta(z')}{8}w',\delta(z')w_n),
\end{align*}
and
$$\hat{v}(w):=v(z'+\frac{\eta(z')}{8}w',\delta(z')w_n),\quad\hat{\psi}(w'):=\psi(z'+\frac{\eta(z')}{8}w'),$$
for $w=(w',w_n)\in Q_{1,1}$, where
$$Q_{1,s}=Q_{1,s}(0):=\{y=(y',y_n)\in\R^n,\big| y'\in\,B'_{s}(0'), |y_n|< 1\},\quad\mbox{for}~s>0.$$ 
Then $\hat{v}(w)$ satisfies
\begin{equation}\label{equ_w}
\begin{cases}
\partial_i(\hat{a}^{ij}(w)\partial_{j}\hat{v}(w))=0\quad &\mbox{in}\ Q_{1,1},\\
\quad a_{\delta}^{nj}(w)\partial_{j}\hat{v}(w)=0\quad&\mbox{on}\ \Gamma_{1,1}^{+}, \\
~~-a_{\delta}^{nj}(w)\partial_{j}\hat{v}(w)= \frac{\eta(z')^2}{64\delta(z')}\hat{\psi}(w)\quad&\mbox{on}\ \Gamma_{1,1}^{-},\\
\end{cases}
\end{equation}
where $\Gamma_{1,1}^{\pm}:=\{y=(y',y_n)\in\R^n,\big| y'\in\,B'_{s}(0'), y_n=\pm 1\}.$ 

Define the norm, as in \cite{L1},
$$\|\psi\|_{\alpha,p;\partial{D}}:=\|\psi\|_{L^{p}(\partial{D})}+\langle \psi\rangle_{\alpha,p;\partial D},\quad\alpha\in(0,1),$$
and
$$\langle \psi\rangle_{\alpha,p;\partial D}:=\Big(\int_{\partial D}\int_{\partial D}\frac{|\psi(x)-\psi(y)|^{p}}{|x-y|^{n-2+p\alpha}}dS_{x}dS_{y}\Big)^{1/p}.$$
Then, for $p>0$, 
$$\|\hat{\psi}\|_{L^p(\Gamma_{1,1}^{-})}\leq C\|\hat{\psi}\|_{L^{\infty}(\Gamma_{1,1}^{-})} \leq C\|\psi\|_{L^{\infty}(\mathtt{\Gamma}^{-}_{\delta, \frac{1}{8}\eta(z')}(z))}\leq C\|\psi\|_{L^{\infty}(\Gamma^{-}_{ \frac{1}{8}\eta(z')}(z'))},$$
and for $p<\frac{2}{1-\alpha}$,
\begin{align*}
\langle\hat{\psi}\rangle_{1-\frac{1}{p},p;\Gamma^{-}_{1,1}}=&\,\Big(\int_{\Gamma^{-}_{1,1}}\int_{\Gamma^{-}_{1,1}}\frac{|\hat{\psi}(w_1)-\hat{\psi}(w_2)|^{p}}{|w_1-w_2|^{n-3+p}}dS_{w_1}dS_{w_2}\Big)^{1/p} \\
\leq&\,[\hat{\psi}]_{C^{\alpha}(\Gamma_{1,1}^{-})}\Big(\int_{|w_2'|\leq1}\int_{|w_1'|\leq1}|w_1'-w_2'|^{-n+3+(\alpha-1)p}dS_{w_1'}dS_{w_2'}\Big)^{1/p} \\
\leq&\,C[\hat{\psi}]_{C^{\alpha}(\Gamma_{1,1}^{-})}\leq\, C\eta(z')^{\alpha}[\psi]_{C^{\alpha}(\Gamma^{-}_{\eta(z')}(z'))}.
\end{align*}
Thus, for $p<\frac{2}{1-\alpha}$,
\begin{equation}\label{phiCalpha}
\|\hat{\psi}\|_{\alpha,p;\Gamma_{1,1}^{-}}
\leq\,C\|\hat{\psi}\|_{C^{\alpha}(\Gamma_{1,1}^{-})}\leq\,C \|\psi\|_{C^{\alpha}(\Gamma^{-}_{\delta(z')}(z))}^*.
\end{equation}

By using the local boundary $W^{2,p}$ estimates for domains with a $C^{1,\gamma}$ boundary portion, for elliptic equations with Neumann boundary condition, (see, e.g. Theorem 6.27 in \cite{L1}), we have, for any given constant $a$, and together with \eqref{phiCalpha},
\begin{align}\label{bootstrap0}
\| \hat{v}-a\|_{W^{2,p_{i}}(Q_{1,\frac{1}{2}+\frac{i}{2k+2}})}\leq&\, C\left(\| \hat{v}-a\|_{L^{p_{i}}(Q_{1,\frac{1}{2}+\frac{i+1}{2k+2}})}+\eta(z')\|\hat{\psi}(w)\|_{1-\frac{1}{p_{i}},p_{i}; \Gamma_{1,1}^{-}}\right)\nonumber\\
\leq&\,C\left(\| \hat{v}-a\|_{L^{p_{i}}(Q_{1,\frac{1}{2}+\frac{i+1}{2k+2}})}+\eta(z')\|\hat{\psi}\|_{C^{\alpha}(\Gamma_{1,1}^{-})}\right),
\end{align}
where $p_{i}<\frac{2}{1-\alpha}$, $i=0,1,2,\dots, k(n)$, and $k(n)$ is some finite integer depending only on $n$. For $n=3$, we take $k=1$ and $p_{1}=2<\frac{2}{1-\alpha}$ in \eqref{bootstrap0}, and obtain
\begin{equation}\label{bootstrap1}
\begin{split}
 \| \hat{v}-a_{1}\|_{W^{2,2}(Q_{1,\frac{3}{4}})}\leq C\left(\| \hat{v}-a_{1}\|_{L^{2}(Q_{1,1})}+\eta(z')\|\hat{\psi}\|_{C^{\alpha}(\Gamma_{1,1}^{-})}\right).
\end{split}
\end{equation}
Since for $n=3$, by the embedding theorem, $W^{1,2}\hookrightarrow L^{6}$, we choose $n<p_{0}<\frac{2}{1-\alpha}$, such that
\begin{equation}\label{bootstrap11}
\| \nabla(\hat{v}-a_{1})\|_{L^{p_{0}}(Q_{1,\frac{3}{4}})}\leq \| \hat{v}-a_{1}\|_{W^{2,2}(Q_{1,\frac{3}{4}})}.
\end{equation}
For this $p_{0}$, we apply \eqref{bootstrap0} again and use the embedding theorem $W^{2,p_{0}}\hookrightarrow C^{1,\mu}$ with $\mu= \min\{1-\frac{n}{p_{0}},\frac{1}{2}\}$, to derive
\begin{equation}\label{bootstrap2}
\begin{split}
&\|\nabla  \hat{v}\|_{C^{0,\mu}(Q_{1,\frac{1}{2}})}=\,\|\nabla ( \hat{v}-a_{2})\|_{C^{0,\mu}(Q_{1,\frac{1}{2}})}\\
\leq&\| \hat{v}-a_{2}\|_{W^{2,p_{0}}(Q_{1,\frac{1}{2}})}\leq C\left(\| \hat{v}-a_{2}\|_{L^{p_{0}}(Q_{1,\frac{3}{4}})}+\eta(z')\|\hat{\psi}\|_{C^{\alpha}(\Gamma_{1,1}^{-})}\right).
\end{split}
\end{equation}
Let $( \hat{v})_{Q_{1,s}}=\frac{1}{|Q_{1,s}|}\int_{Q_{1,s}} \hat{v}$ be the average of $\hat{v}$ over $Q_{1,s}$. Taking $a_{2}=(\hat{v})_{Q_{1,\frac{3}{4}}}$ in \eqref{bootstrap2},  by virtue of the Poincar\'{e} inequality, 
\begin{align}\label{boot11}
\|\hat{v}-(\hat{v})_{Q_{1,\frac{3}{4}}}\|_{L^{p_0}(Q_{1,\frac{3}{4}})}\leq&\, C\|\nabla(\hat{v}-(\hat{v})_{Q_{1,\frac{3}{4}}})\|_{L^{p_0}(Q_{1,\frac{3}{4}})}
=\,  C\|\nabla(\hat{v}-(\hat{v})_{Q_{1,1}})\|_{L^{p_0}(Q_{1,\frac{3}{4}})}.
\end{align}
Now taking $a_{1}=(\hat{v})_{Q_{1,1}}$ in \eqref{bootstrap1} and \eqref{bootstrap11}, combining with \eqref{bootstrap1}--\eqref{bootstrap2} leads to
\begin{align}\label{boot1}
\|\nabla  \hat{v}\|_{C^{0,\mu}(Q_{1,\frac{1}{2}})}\leq&\, C\left(\| \hat{v}-(\hat{v})_{Q_{1,1}}\|_{L^{2}(Q_{1,1})}+\eta(z')\|\hat{\psi}\|_{C^{\alpha}(\Gamma_{1,1}^{-})}\right).
\end{align}
For higher dimensions $n\geq4$, we choose $\frac{1}{p_{i+1}}=\frac{1}{p_{i}}+\frac{2}{n}$, $i=0,1,\dots,k(n)$, where $k(n)$ is the smallest integer such that $p_{k(n)}\geq2$. By using \eqref{bootstrap0} and a bootstrap argument, we finally have \eqref{boot1} holds. Then using the Poincar\'e inequality again and rescaling back to $v$, we can obtain \eqref{vdeltabarv}.
\end{proof}

\begin{remark}\label{u-1}
By Lemma \ref{uvsqrtdelta}, \eqref{ryz} and \eqref{vdelta}, we have
\begin{equation}\label{lem1-1}
\|\nabla u\|^{*}_{C^{\mu}(\mathtt{Q}_{\delta,\frac{1}{8}\sqrt{\eta(z')}}(z))}\leq  C\eta(z')^{-1}\underset{\Omega_{ {\frac{1}{4}\sqrt{\eta(z')}}}(z)}\osc~ u+C \|\phi\|^*_{C^{\alpha}(\Gamma^{-}_{\frac{1}{4}\sqrt{\eta(z')}}(z))}. 
\end{equation}
\end{remark}

We rewrite the equation in \eqref{equ_v1} as 
\begin{equation}\label{v1}
\sum_{i=1}^{n-1}\partial_i\Big(a^{ii}(y)\partial_{i}v(y)\Big)+\sum_{i=1}^{n-1}\partial_iF^i+\sum_{j=1}^{n}\partial_n\Big(a^{nj}(y)\partial_{j}v(y)\Big)=0\quad \mbox{in}\ \mathtt{Q}_{\delta,R}(z),
\end{equation}
where $a^{ii}=\frac{1}{2}\frac{\delta(y')}{\delta(z')}$ and $F^i=a^{in}\partial_nv$. Set
\begin{equation}\label{barequ_v1}
\bar v(y'):=\fint_{-\delta(z')}^{\delta(z')}v(y',y_n)dy_n,\quad|y'|<R.
\end{equation}
In fact, 
\begin{equation}\label{barequ_v11}
\bar v(y')=\fint_{-\delta(z')}^{\delta(z')}v(y',y_n)dy_n=\fint_{g(x')}^{\epsilon+f(x')}u(x',x_n)dx_n:=\bar u(x').
\end{equation}

Using the boundary condition in \eqref{equ_v1}, 
$$\sum_{j=1}^{n}a^{nj}\partial_jv|_{\mathtt{\Gamma}_{\delta,R}^{+}(z)}-a^{nj}\partial_jv|_{\mathtt{\Gamma}_{\delta,R}^{-}(z)}=\psi(y'),$$
it follows that
$$\sum_{j=1}^{n}\fint_{-\delta(z')}^{\delta(z')}\partial_n (a^{nj}\partial_jv)dy_n=\,\frac{\psi(y')}{2\delta(z')}.$$
Hence, taking average with respect to $y_{n}$ to equation \eqref{v1}, we have $\bar v(y')$ satisfies
\begin{align}\label{barv}
\sum_{i=1}^{n-1}\partial_i(a^{ii}\partial_i\bar v)+ \sum_{i=1}^{n-1}\partial_i \bar F^i
=\,\frac{\psi(y')}{2\delta(z')},\quad\mbox{where}~\bar F^i=\fint_{-\delta(z')}^{\delta(z')}a^{in}\partial_nvdy_n.
\end{align}
Because $\partial_n\bar v=0$, then \eqref{barv} can also be rewritten as
\begin{equation}\label{barv1}
\sum_{i=1}^{n}\partial_i\big(a^{ii}\partial_i\bar v\big)+ \sum_{i=1}^{n-1}\partial_i\bar F^i=\frac{\psi(y')}{2\delta(z')}.
\end{equation}

In \eqref{vdeltabarv}, since
$$\|\nabla v\|_{L^2(\mathtt{Q}_{\delta,\eta(z')/8}(z))}\leq\,\|\nabla (v-\bar v)\|_{L^2(\mathtt{Q}_{\delta,\eta(z')/8}(z))}+\|\nabla \bar v\|_{L^2(\mathtt{Q}_{\delta,\eta(z')/8}(z))},$$
we next estimate the $L^{2}$ norms of $\nabla (v-\bar v)$ and $\nabla \bar v$ in the following two lemmas.

\subsection{$L^{2}$ estimates of $\nabla(v-\bar{v})$}
We substracte \eqref{v1} from \eqref{barv1} to obtain
\begin{equation}\label{vminusbarv}
-\sum_{i=1}^{n}\partial_i\big(a^{ii}\partial_i (v-\bar v)\big)=\sum_{i=1}^{n-1}\partial_i\big(F^i- \bar F^i\big)+
\sum_{j=1}^{n-1}\partial_n(a^{nj}\partial_jv)+\frac{\psi(y')}{2\delta(z')},
\end{equation}
where $a^{ii}=\frac{\delta(y')}{2\delta(z')}$ and $a^{nn}=\frac{\delta(y')}{2\delta(z')}\sum_{i=1}^{n}(e_i)^2$.

To estimate $\nabla(v-\bar v)$, we first estimate $|F^i-\bar F^i|$. Since $\partial_n\bar v=0$, then
$$
F^i-\bar F^i=a^{in}\partial_n(v-\bar v)+\fint_{-\delta(z')}^{\delta(z')}a^{in}\partial_n(\bar{v}-v)dy_n.
$$
By using \eqref{ani1}, 
$$|a^{in}\partial_n(v-\bar v)|\leq C\delta(z')^{1/2}|\nabla(v-\bar v)|,$$
and furthermore,
\begin{align*} 
\Big|\fint_{-\delta(z')}^{\delta(z')}a^{in}\partial_n(\bar{v}-v)dy_n\Big|^2
\leq&\,\frac{1}{4\delta(z')^2}\Big(\int_{-\delta(z')}^{\delta(z')}|a^{in}\partial_n(\bar{v}-v)|dy_n\Big)^2\\
\leq&\,\frac{1}{2\delta(z')} \int_{-\delta(z')}^{\delta(z')}|a^{in}\partial_n(\bar{v}-v)|^2dy_n 
\leq\, C\int_{-\delta(z')}^{\delta(z')}|\nabla(v-\bar v)|^2dy_n.
\end{align*}
Thus, 
 \begin{align}\label{F-barF}
|F^i-\bar F^i|\leq C\delta(z')^{1/2}|\nabla(v-\bar v)|+C\Big(\int_{-\delta(z')}^{\delta(z')}|\nabla(v-\bar v)|^2dy_n \Big)^{1/2}.
 \end{align}

For the $L^{2}$ estimate of $|\nabla v-\nabla\bar v|$, we have

\begin{lemma}\label{lem_v-barv}
Let $v$ and $\bar v$ be the corresponding solutions to \eqref{equ_v1} and \eqref{barv1}.
Then there exists $\bar R$ $(<R)$, such that for $|z'|<\bar R$, 
  \begin{align}\label{v-barv}
 \|\nabla(v-\bar v)\|_{L^2({\mathtt{Q}_{\delta,\eta(z')/8}(z')})}
 \leq&\, C\eta(z')^{n/2}\Big(\|\nabla v\|_{L^{2}(\mathtt{Q}_{\delta, \sqrt{\eta(z')}/64}(z))}\nonumber\\&+\eta(z')^{1/2}\|\nabla\bar v\|_{L^{\infty}(B'_{ \sqrt{\eta(z')}/64}(z'))}+\|\psi\|_{L^{\infty}(\mathtt{\Gamma}^{-}_{\delta,  \sqrt{\eta(z')}/64}(z))}\Big).
 \end{align}
\end{lemma}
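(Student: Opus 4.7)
The plan is to prove \eqref{v-barv} by a Caccioppoli-type energy identity, combined with a vertical Poincar\'e inequality, and iterated over a geometric sequence of radii running from $\eta(z')/8$ up to $\sqrt{\eta(z')}/64$. I would test \eqref{vminusbarv} against $(v-\bar v)\zeta^2$ for a smooth cutoff $\zeta=\zeta(y')$ depending only on the horizontal variables, with $\zeta\equiv 1$ on $B'_{r}(z')$, supported in $B'_{2r}(z')$, and satisfying $|\nabla_{y'}\zeta|\leq C/r$. Three structural observations drive the computation. First, the source term $\tfrac{\psi(y')}{2\delta(z')}(v-\bar v)\zeta^2$ integrates to zero, since $\psi$ and $\zeta$ depend only on $y'$ while $\int_{-\delta(z')}^{\delta(z')}(v-\bar v)\,dy_n=0$ by definition of $\bar v$. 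Second, the four surface integrals on $\mathtt{\Gamma}_{\delta,2r}^{\pm}(z)$ produced by integration by parts in $y_n$ — two from the $\partial_n(a^{nn}\partial_n(v-\bar v))$-piece of the LHS and two from $\sum_{j\leq n-1}\partial_n(a^{nj}\partial_j v)(v-\bar v)\zeta^2$ on the RHS — cancel pairwise after invoking the Neumann conditions $\sum_{j}a^{nj}\partial_j v=0$ on $\mathtt{\Gamma}^+$ and $-\sum_{j}a^{nj}\partial_j v=\psi$ on $\mathtt{\Gamma}^-$ together with $\partial_n\bar v=0$, leaving only the genuinely controlled boundary integral $\int_{\mathtt{\Gamma}_{\delta,2r}^-}\psi(v-\bar v)\zeta^2\,dy'$.

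With the identity reduced in this way, I would bound the surviving right-hand sides as follows: the $F^i-\bar F^i$ term via \eqref{F-barF} and Cauchy's inequality (both the pointwise $\delta(z')^{1/2}|\nabla(v-\bar v)|$ piece and the $(\int|\nabla(v-\bar v)|^2 dy_n)^{1/2}$ piece are absorbable once $\bar R$ is small); the interior piece $\sum_{j\leq n-1}\int a^{nj}\partial_j v\,\partial_n v\,\zeta^2$ via the bound $|a^{nj}|\leq C\eta(z')^{1/2}$ of Lemma \ref{lem21}; and the surviving surface integral via the trace-type inequality $|(v-\bar v)(y',-\delta(z'))|^2\leq 2\delta(z')\int_{-\delta(z')}^{\delta(z')}|\partial_n v|^2\,dy_n$, which holds because the zero vertical average of $v-\bar v$ forces it to vanish at some intermediate height $y_n^*\in(-\delta(z'),\delta(z'))$. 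The vertical Poincar\'e inequality $\int_{-\delta(z')}^{\delta(z')}|v-\bar v|^2\,dy_n\leq C\delta(z')^2\int_{-\delta(z')}^{\delta(z')}|\partial_n v|^2\,dy_n$ then handles the cutoff cross term $\int(v-\bar v)^2|\nabla\zeta|^2$. Absorbing all gradient pieces into the coercive LHS $\int a^{ii}|\partial_i(v-\bar v)|^2\zeta^2$ produces a one-step estimate of the form
$$\int_{\mathtt{Q}_{\delta,r}(z)}|\nabla(v-\bar v)|^2 \leq C\Big(\tfrac{\delta(z')^2}{r^2}+\eta(z')^{1/2}\Big)\int_{\mathtt{Q}_{\delta,2r}(z)}|\nabla v|^2 + C\delta(z')\,r^{n-1}\|\psi\|_{L^{\infty}(\mathtt{\Gamma}_{\delta,2r}^-(z))}^2.$$

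Finally, I would iterate this one-step bound along the dyadic sequence $r_k=2^k(\eta(z')/8)$, stopping at the largest $N$ with $r_N\leq\sqrt{\eta(z')}/64$, so $N\sim|\log\varepsilon|$. Decomposing $|\nabla v|^2\leq 2|\nabla(v-\bar v)|^2+2|\nabla\bar v|^2$ and using the elementary bound $\int_{\mathtt{Q}_{\delta,2r_k}}|\nabla\bar v|^2\leq C\delta(z')\,r_k^{n-1}\|\nabla\bar v\|_{L^{\infty}(B'_{2r_k}(z'))}^2$ closes the recursion. After choosing $\bar R<R$ small enough that the multiplicative factor $C[(\delta(z')/r_k)^2+\eta(z')^{1/2}]$ stays strictly below a fixed constant less than $1$ for every $k$ and every $|z'|<\bar R$, the accumulated product over $N$ steps decays super-polynomially in $\eta(z')$ and is thus dominated by $C\eta(z')^{n}$; a parallel telescoping absorbs the error-term sum into the $\eta(z')^{n}\|\nabla\bar v\|_{L^{\infty}}^2$ and $\eta(z')^{n}\|\psi\|_{L^{\infty}}^2$ contributions. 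Taking square roots delivers \eqref{v-barv}. The main obstacle I expect is the careful bookkeeping of the four surface contributions in the second observation above: the complete pairwise cancellation of all $a^{nn}\partial_n v$-terms requires simultaneous use of \emph{both} Neumann boundary conditions together with the identity $\partial_n\bar v=0$, and missing any one of these would leave a non-coercive surface integral that cannot be absorbed by the remaining terms.
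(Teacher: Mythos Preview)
Your overall strategy matches the paper's: test \eqref{vminusbarv} against $(v-\bar v)\zeta^2$ with a horizontal cutoff, use the vertical Poincar\'e inequality \eqref{poincare} and the bound \eqref{F-barF}, and iterate in the radius from $\eta(z')/8$ out to $\sqrt{\eta(z')}/64$. Your observation that the volume source term $\int\frac{\psi}{2\delta(z')}(v-\bar v)\zeta^2$ vanishes identically (because $\psi,\zeta$ depend only on $y'$ while $\int_{-\delta(z')}^{\delta(z')}(v-\bar v)\,dy_n=0$) is correct and slightly cleaner than the paper, which instead bounds that term via \eqref{poincare}.

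The iteration step, however, has a gap. You claim that by choosing $\bar R$ small the factor $C[(\delta(z')/r_k)^2+\eta(z')^{1/2}]$ can be kept below a fixed constant less than $1$ for \emph{every} $k$. But at $k=0$ you have $r_0=\eta(z')/8$ and $\delta(z')\sim\eta(z')$, so $(\delta(z')/r_0)^2$ is of order $64$ regardless of $\bar R$; the first several dyadic steps carry a factor much larger than $1$, and no choice of $\bar R$ changes that. The paper sidesteps this by iterating along an \emph{arithmetic} sequence $t_i=\eta(z')/8+2\sqrt{\bar C}\,i\,\eta(z')$ with step size a fixed multiple of $\eta(z')$: then $2\bar C\,\delta(z')^2/(t_i-t_{i-1})^2=\tfrac{1}{2}$ at every step, and the number of steps is $k(z')\sim\eta(z')^{-1/2}$, so $(1/2)^{k(z')}\le\eta(z')^n$ once $\bar R$ is small enough. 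Your dyadic scheme can in fact be rescued---the product $\prod_{k=0}^{N-1}C[(\delta/r_k)^2+\eta^{1/2}]$ does decay super-polynomially, since $(\delta/r_k)^2\sim 64\cdot 4^{-k}$ and the geometrically shrinking later factors eventually dominate the first few large ones---but that requires a different bookkeeping than the uniform-contraction argument you sketched.
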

\begin{proof}
For $0<t<s<\frac{\sqrt{\eta(z')} }{8} $, $|z'|<R$, let $\xi$ be a cutoff function satisfying $0\leq\xi(x')\leq1$, 
$$\xi(y')=1, ~\mbox{if}~ |y'-z'|<t,\quad\xi(y')=0, ~\mbox{if}~|y'-z'|>s,$$ and $|\nabla_{x'}\xi(x')|\leq\frac{2}{s-t}$. Multiplying the equation in \eqref{vminusbarv} by $ (v-\bar v)\xi^{2}$, and integrating by parts, yields
\begin{equation}\label{integratebyparts}
\begin{split}
&\int_{\mathtt{Q}_{\delta,s}(z')}\sum_{i=1}^{n}a^{ii}\partial_i(v-\bar v)\partial_i\Big((v-\bar v)\xi ^2\Big)\\
=&-\sum_{j=i}^{n-1}\int_{\mathtt{Q}_{\delta,s}(z')}(F^i-\bar F^i)\partial_{i}\Big((v-\bar v)\xi ^2\Big)
-\sum_{j=1}^{n-1}\int_{\mathtt{Q}_{\delta,s}(z')}a^{nj}\partial_jv\partial_n(v-\bar v)\xi^2\\
&+\frac{1}{2\delta(z')}\int_{\mathtt{Q}_{\delta,s}(z')}\psi(v-\bar v)\xi^2+\int_{\mathtt{\Gamma}^-_{\delta,s}(z')}\psi(v-\bar v)\xi^2,
\end{split}
\end{equation}
here we used
$$-\int_{\mathtt{\Gamma}^-_{\delta,s}(z')}\Big(a^{nn}\partial_{n}(v-\bar{v})+\sum_{j=1}^{n-1}a^{nj}\partial_{j}v\Big)(v-\bar v)\xi^2=\int_{\mathtt{\Gamma}^-_{\delta,s}(z')}\psi(v-\bar v)\xi^2.$$

For the left hand side of \eqref{integratebyparts}, by using \eqref{aii}, we have 
\begin{align*}\label{ellipticity}
\int_{\mathtt{Q}_{\delta,s}(z')}\sum_{i=1}^{n}a^{ii}\partial_i(v-\bar v)\partial_i(v-\bar v)\xi ^2\geq\lambda\int_{\mathtt{Q}_{\delta,s}(z')}|\nabla(v-\bar v)|^2\xi^2.
\end{align*}
By virtue of the Cauchy inequality and the following inequality
\begin{equation}\label{poincare}
\int_{\mathtt{Q}_{\delta,s}(z')}|v-\bar v|^2\leq\,C\delta(z')^{2}\int_{\mathtt{Q}_{\delta,s}(z')}|\nabla(v-\bar v)|^2,
\end{equation}
we have
\begin{align*}
&\Big|\int_{\mathtt{Q}_{\delta,s}(z')}\sum_{i=1}^{n}2\xi (v-\bar v)a^{ii}\partial_i(v-\bar v)\partial_i\xi\Big| \nonumber\\
\leq&\, \frac{\lambda}{16}\int_{\mathtt{Q}_{\delta,s}(z')}|\nabla(v-\bar v)|^2\xi^2+C\int_{\mathtt{Q}_{\delta,s}(z')}|v-\bar v|^2|\nabla\xi|^2 \nonumber\\
\leq&\,\frac{\lambda}{16}\int_{\mathtt{Q}_{\delta,s}(z')}|\nabla(v-\bar v)|^2\xi^2+C\frac{\delta(z')^2}{(s-t)^2}\int_{\mathtt{Q}_{\delta,s}(z')}|\nabla(v-\bar v)|^2.
\end{align*}

For the first term on the right hand side of \eqref{integratebyparts}, using \eqref{F-barF} and the H\"older inequality leads to
\begin{align*}
\int_{\mathtt{Q}_{\delta,s}(z')}|F^i-\bar F^i|&|\nabla_i(v-\bar v)|\xi ^2
\leq\, C \delta(z')^{1/2}\int_{\mathtt{Q}_{\delta,s}(z')}|\nabla(v-\bar v)|^2\xi^2\nonumber\\
&+C \int_{\mathtt{Q}_{\delta,s}(z')}\left(\int_{-\delta(z')}^{\delta(z')}|\nabla(v-\bar v)|^2dy_n \right)^{1/2} |\nabla(v-\bar v)|\xi^2.
\end{align*}
Further, by using the H\"older inequality,
\begin{align*}
&\int_{\mathtt{Q}_{\delta,s}(z')}\left(\int_{-\delta(z')}^{\delta(z')}|\nabla(v-\bar v)|^2dy_n \right)^{1/2} |\nabla(v-\bar v)|\xi^2 \nonumber\\
\leq&\, C\delta(z')^{-1/2}\int_{\mathtt{Q}_{\delta,s}(z')} \left(\int_{-\delta(z')}^{\delta(z')}|\nabla(v-\bar v)|^2dy_n\right)\xi^2 +C\delta(z')^{1/2}\int_{\mathtt{Q}_{\delta,s}(z')}|\nabla(v-\bar v)|^2\xi^2\nonumber\\
\leq&\, C\delta(z')^{1/2}\int_{\mathtt{Q}_{\delta,s}(z')}|\nabla(v-\bar v)|^2\xi^2.
\end{align*}
For another term,
\begin{align*}
\int_{\mathtt{Q}_{\delta,s}(z')}|F^i-\bar F^i|&|v-\bar v|\xi |\partial_{i}\xi|
\leq\, C\int_{\mathtt{Q}_{\delta,s}(z')}|F-\bar F|^2\xi^2+C\int_{\mathtt{Q}_{\delta,s}(z')}|v-\bar v|^2|\nabla\xi|^2\nonumber\\
\leq&\, C \delta(z')\int_{\mathtt{Q}_{\delta,s}(z')}|\nabla(v-\bar v)|^2\xi^2+\frac{C\delta(z')^2}{(s-t)^2}\int_{\mathtt{Q}_{\delta,s}(z')}|\nabla(v-\bar v)|^2.
\end{align*}

While, for the second term on the right hand side of \eqref{integratebyparts}, by virtue of \eqref{ani1}, 
\begin{align*}
&\Big|\int_{\mathtt{Q}_{\delta,s}(z')}\sum_{j=1}^{n-1}a^{nj}\partial_jv\partial_n(v-\bar v)\xi^2\Big| \nonumber\\
\leq&\, \Big|\int_{\mathtt{Q}_{\delta,s}(z')}\sum_{j=1}^{n-1}a^{nj}\partial_j(v-\bar v)\partial_n(v-\bar v)\xi^2\Big|+\Big|\int_{\mathtt{Q}_{\delta,s}(z')}\sum_{j=1}^{n-1}a^{nj}\partial_j\bar v\partial_n(v-\bar v)\xi^2\Big|\nonumber\\
\leq&\, \Big(C\delta(z')^{1/2}+\frac{\lambda}{16}\Big)\int_{\mathtt{Q}_{\delta,s}(z')}|\nabla(v-\bar v)|^2\xi^2+C \delta(z')\int_{\mathtt{Q}_{\delta,s}(z')}|\nabla\bar v|^2\xi^2.
\end{align*}
For the third term, by means of \eqref{poincare},
\begin{align*}
\Big|\frac{1}{2\delta(z')}\int_{\mathtt{Q}_{\delta,s}(z')}\psi(v-\bar v)\xi^2\Big|\leq \frac{\lambda}{16}\int_{\mathtt{Q}_{\delta,s}(z')}|\nabla(v-\bar v)|^2\xi^2+C\int_{\mathtt{ \mathtt{Q}_{\delta,s}(z')}}\psi^2\xi^2.
\end{align*}
For the fourth term, choosing a function $0\leq\zeta(y_n)\leq1$ such that $\zeta(\delta(z'))=0$, $\zeta(-\delta(z'))=1$, and $|\nabla\zeta(y_n)|\leq\frac{4}{\delta(z')}$, and using \eqref{poincare} again, we obtain
\begin{align*}
\Big|\int_{\mathtt{\Gamma}^{-}_{\delta,s}(z)}\psi(v-\bar v)\xi^2\Big|=&\,\Big|\int_{\mathtt{\Gamma}^{-}_{\delta,s}(z)}\int_{-\delta(z')}^{\delta(z')}\psi(y')\xi^2\partial_n((v-\bar v)\zeta)dy_ndS\Big|\\
\leq&\,\frac{\lambda}{16}\int_{\mathtt{Q}_{\delta,s}(z')}|\nabla(v-\bar v)|^2\xi^2+C\int_{\mathtt{Q}_{\delta,s}(z')}\psi^2\xi^2.
\end{align*}
Substituting these estimates above into \eqref{integratebyparts} leads to
\begin{equation}\label{estimatevbarv}
\begin{split}
\int_{\mathtt{Q}_{\delta,s}(z')}&|\nabla(v-\bar v)|^2\xi^2 
\leq\, \Big(\frac{1}{4}+C \delta(z')^{1/2}\Big)\int_{\mathtt{Q}_{\delta,s}(z')}|\nabla(v-\bar v)|^2\xi^2 \\
&+\frac{C\delta(z')^2}{(s-t)^2}\int_{\mathtt{Q}_{\delta,s}(z')}|\nabla(v-\bar v)|^2+C\delta(z') \int_{\mathtt{Q}_{\delta,s}(z')}|\nabla\bar v|^2+C\int_{\mathtt{Q}_{\delta,s}(z')}\psi^2\xi^2.
\end{split}
\end{equation}

We now fix the constant $C$ in \eqref{estimatevbarv} to be $\bar{C}$, and then there exist a universal constant $R_{1}< R$, such that $\bar{C} \delta(z')^{1/2}<\frac{1}{4}$, if $|z'|<R_{1}$. (When $\delta(z')^{1/2}\geq\frac{1}{4\bar{C} }$, we can apply the standard elliptic theory.) Then \eqref{estimatevbarv} can be rewritten as 
\begin{equation}\label{iteration0}
\begin{split}
\int_{\mathtt{Q}_{\delta,s}(z')}|\nabla(v-\bar v)|^2\xi^2 \leq&\,\frac{2\bar{C}\delta(z')^{2}}{(s-t)^2}\int_{\mathtt{Q}_{\delta,s}(z')}|\nabla(v-\bar v)|^2 \\
&+2\bar{C}\Big(\delta(z') \int_{\mathtt{Q}_{\delta,s}(z')}|\nabla\bar v|^2+ \int_{\mathtt{Q}_{\delta,s}(z')}\psi^2\xi^2\Big).
\end{split}
\end{equation}
We choose $$t_0=\frac{\eta(z')}{8},\quad t_i=\frac{\eta(z')}{8}+2\sqrt{\bar{C}}i\eta(z'),\quad\,i=1,2,\dots,k,$$
such that
$$|t_{i}-t_{i-1}|^{2}=4\bar{C}\eta(z')^{2},$$
and denote $G(t_{i}):=\int_{\mathtt{Q}_{\delta,t_{i}}(z')}|\nabla(v-\bar v)|^2$. We have  the following iteration formula from \eqref{iteration0},  
\begin{equation}\label{iteration2}
\begin{split}
G(t_{i-1}) \leq&\frac{1}{2}G(t_{i})+2\bar{C}\Big(\delta(z') \int_{\mathtt{Q}_{\delta,t_i}(z')}|\nabla\bar v|^2+ \int_{\mathtt{Q}_{\delta,t_i}(z')}\psi^2\xi^2\Big).
\end{split}
\end{equation}

Applying this iteration formula $k(z')$ times, where $k(z')=\left[\frac{\frac{\sqrt{\eta(z')}}{64} -\frac{\eta(z')}{8}}{2\sqrt{\bar{C}}\eta(z')}\right]$, yields
\begin{align*}
G(t_0)\leq&\,(\frac{1}{2})^{k(z')}G(t_{k(z')})
+C_1\delta(z')^n\Big(\delta(z') \|\nabla\bar v\|_{L^{\infty}(B'_{\sqrt{\eta(z')}/64}(z'))}^2+\|\psi\|^2_{L^{\infty}(\Gamma^{-}_{\delta,  \sqrt{\eta(z')}/64 }(z))}\Big),
\end{align*}
where $C_1=4\bar{C}S_n\frac{\delta(z')^{n-1}}{\delta(z')^{n-1}}\sum_{i=1}^{k(z')}\frac{1}{2^i}\big(1+2(i+1)\sqrt{\bar{C}}\big)^{n-1}$ is another universal constant, and $S_n$ is the volume of unit sphere in $\R^{n-1}$. Choosing $\bar R <R_{1}$ such that $(\frac{1}{2})^{k(z')}\leq\eta(z')^n$ if $|z'|<\bar R$, we have
\begin{align*}
G(t_0)
\leq&\, \eta(z')^n\Big(G(t_{k(z')})+C_1\eta(z') \|\nabla\bar v\|_{L^{\infty}(B'_{\sqrt{\eta(z')}/64}(z'))}^2+C_1\|\psi\|^2_{L^{\infty}(\Gamma^{-}_{\eta, \sqrt{\eta(z')}/64}(z))}\Big).
\end{align*}
 By the definition of $\bar v$,  
$$\int_{\mathtt{Q}_{\delta,\sqrt{\eta(z')}/64}(z')}|\nabla\bar v|^2\leq\int_{\mathtt{Q}_{\delta, \sqrt{\eta(z')}/64}(z)}|\nabla v|^2.$$
So that
\begin{align*}
G(t_{k(z')})=\int_{\mathtt{Q}_{\delta,t_{k(z')}}(z')}|\nabla(v-\bar v)|^2\leq&\, C\int_{\mathtt{Q}_{\delta, \sqrt{\eta(z')}/64}(z)}|\nabla v|^2.
\end{align*}
Thus, the proof of Lemma \ref{lem_v-barv} is completed.
\end{proof}

\subsection{$L^{\infty}$ estimate of $\nabla\bar v$}

For $\nabla\bar v$, we have the following estimate.
 
\begin{lemma}\label{lem34}
Let $\bar v$ be the solution to \eqref{barv}. Then
\begin{equation}\label{estbarvv1}
\begin{split}
\|\nabla\bar v\|_{L^{\infty}(B'_{\sqrt{\eta(z')}/64}(z'))}\leq& C\eta(z')^{\frac{1-\mu}{2}}\|\nabla v\|_{C^{\mu}(\mathtt{Q}_{\delta,\sqrt{\eta(z')}/32}(z))}^* \\
&+C\eta(z')^{-1/2}\Big(\underset{\mathtt{Q}_{\delta,\sqrt{\eta(z')}/32}(z)}\osc~v+ C\|\psi\|_{L^{\infty}(\Gamma^{-}_{\delta,\sqrt{\eta(z')}/32}(z))}\Big),
\end{split}
\end{equation}
where $\mu$ is the same as Lemma \ref{lem_estv}. 
\end{lemma}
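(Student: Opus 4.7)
\emph{Plan of proof.} The equation \eqref{barv} is a \emph{uniformly elliptic} divergence-form equation for $\bar v$ on the $(n-1)$-dimensional disk $B'_{\sqrt{\eta(z')}/32}(z')$, with flux $\bar F^i$ and right-hand side $\psi/(2\delta(z'))$. My plan is to rescale this disk to unit size, apply a standard divergence-form $C^{1,\mu}$ Schauder estimate (or $W^{2,p}$ plus Sobolev embedding) to the rescaled equation, and then undo the rescaling. Since the equation is nondegenerate, all the blow-up in $\nabla\bar v$ as $\eta(z')\to 0$ comes purely from the scaling factors.

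Setting $r:=\sqrt{\eta(z')}/32$ and $\hat v(w'):=\bar v(z'+rw')$, and analogously $\hat a^{ii}$, $\hat F^i$, $\hat\psi$, multiplication of \eqref{barv} by $r^2$ yields on $B'_1$
$$\sum_{i=1}^{n-1}\partial_{w_i}(\hat a^{ii}\partial_{w_i}\hat v)+r\sum_{i=1}^{n-1}\partial_{w_i}\hat F^i=\frac{r^2}{2\delta(z')}\hat\psi.$$
By Lemma \ref{lem21}, the coefficients $\hat a^{ii}$ are uniformly elliptic with $[\hat a^{ii}]_{C^\mu(B'_1)}\le C$, and $r^2/\delta(z')\le C$ since $\delta(z')\sim\eta(z')$. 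The main ingredient is the $C^\mu$-estimate of the flux $\bar F^i=\fint_{-\delta(z')}^{\delta(z')}a^{in}\partial_n v\,dy_n$: splitting $a^{in}(y_1',y_n)\partial_n v(y_1',y_n)-a^{in}(y_2',y_n)\partial_n v(y_2',y_n)$ in the usual way and using \eqref{ani1}, \eqref{ani2}, together with $[\nabla v]_{C^\mu}\le\eta(z')^{-\mu}\|\nabla v\|^*_{C^\mu}$, one obtains
$$\|\bar F^i\|_\infty\le C\eta(z')^{1/2}\|\nabla v\|^*_{C^\mu},\qquad [\bar F^i]_{C^\mu}\le C\eta(z')^{(1-2\mu)/2}\|\nabla v\|^*_{C^\mu},$$
so that after rescaling $\|r\hat F^i\|_{C^\mu(B'_1)}\le C\eta(z')^{1-\mu/2}\|\nabla v\|^*_{C^\mu}$. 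A standard divergence-form Schauder estimate on $B'_{1/2}\subset B'_1$ then gives
$$\|\nabla\hat v\|_{L^\infty(B'_{1/2})}\le C\Bigl(\underset{B'_1}\osc\,\hat v+\|r\hat F^i\|_{C^\mu(B'_1)}+\tfrac{r^2}{\delta(z')}\|\hat\psi\|_\infty\Bigr),$$
and since $\bar v$ is the $y_n$-average of $v$, $\underset{B'_1}\osc\,\hat v\le\underset{\mathtt{Q}_{\delta,r}(z)}\osc\,v$.

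Undoing the rescaling via $\nabla_y\bar v=r^{-1}\nabla_w\hat v$ and $r^{-1}\sim\eta(z')^{-1/2}$ converts the three terms above into $\eta(z')^{-1/2}\underset{\mathtt{Q}_{\delta,r}(z)}\osc\,v$, $\eta(z')^{(1-\mu)/2}\|\nabla v\|^*_{C^\mu}$, and $\eta(z')^{-1/2}\|\psi\|_\infty$, which is exactly \eqref{estbarvv1}. The main technical obstacle is the careful bookkeeping of powers of $\eta(z')$ produced by the anisotropic geometry (vertical size $\delta(z')\sim\eta(z')$ versus horizontal size $\sqrt{\eta(z')}$). In particular, recovering the precise exponent $(1-\mu)/2$ in front of the starred norm $\|\nabla v\|^*_{C^\mu}$ requires combining the $C^\mu$-estimate \eqref{ani2} of the off-diagonal entries $a^{in}$ with the starred norm on $\nabla v$; a cruder bound using only $\|\nabla v\|_\infty$ would cost an extra factor of $\eta(z')^{\mu/2}$ and would be insufficient for the bootstrap step that follows in Section \ref{section3}.
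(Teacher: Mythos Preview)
Your proposal is correct and follows essentially the same approach as the paper's own proof: rescale $B'_{\sqrt{\eta(z')}/32}(z')$ to unit size, use \eqref{ani1}--\eqref{ani2} to bound $[\bar F^i]_{C^\mu}\le C\eta(z')^{(1-2\mu)/2}\|\nabla v\|^*_{C^\mu}$, apply a standard interior $C^{1,\mu}$ estimate to the rescaled divergence-form equation, and scale back. The only cosmetic difference is that the paper writes the zeroth-order term as $\|\bar v_{\sqrt\eta}-a\|_{L^2(B'_1)}$ with $a=\inf_{\mathtt{Q}_{\delta,\sqrt{\eta(z')}/32}(z)}v$ and then bounds this by the oscillation, whereas you invoke $\osc\,\hat v\le\osc\,v$ directly; the exponent bookkeeping you outline matches the paper's exactly.
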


\begin{proof}
We rewrite \eqref{barv} in $\R^{n-1}$ as
$$\partial_i(a^{ii}\partial_i\bar v)=-\partial_i\bar{F}^i+G,\quad\mbox{where}~~ G(y'):=\frac{\psi(y')}{2\delta(z')}.$$  
 We set 
$$\bar{v}_{\sqrt{\eta}}(w'):=\bar v(z'+\frac{\sqrt{\eta(z')}}{32}w'),\quad |w'|\leq1,$$
to rescale  $\bar v$ in $B'_{ \sqrt{\eta(z')}/32}(z')$. Then $\bar{v}_{\sqrt{\eta}}$ satisfies 
$$\partial_i(a_{\sqrt{\eta}}^{ii}\partial_i\bar{v}_{\sqrt{\eta}})=-\partial_i\bar{F}_{\sqrt{\eta}}^i+G_{\sqrt{\eta}},$$
where 
$$a_{\sqrt{\eta}}^{ii}(w')=a^{ii}(z'+ \frac{\sqrt{\eta(z')}}{32}w'),\quad
\bar{F}_{\sqrt{\eta}}^i(w')=\frac{\sqrt{\eta(z')}}{32}\bar{F}^i(z'+\frac{\sqrt{\eta(z')}}{32}w'),$$
and$$\quad G_{\sqrt{\eta}}(w')=\frac{\ \eta(z')}{32^2}G(z'+ \frac{\sqrt{\eta(z')}}{32}w').$$

By the definition of $\bar{F}^i$, \eqref{ani1} and \eqref{ani2}, we have 
\begin{align*}
&[\bar{F}^i]_{C^{\mu}(B'_{\sqrt{\eta(z')}/32}(z'))}\leq C [a^{in}\partial_nv]_{C^{\mu}(\mathtt{Q}_{\delta,\sqrt{\eta(z')}/32}(z))}\\
\leq&\, C \eta(z')^{1/2}[\partial_nv]_{C^{\mu}(\mathtt{Q}_{\delta,\sqrt{\eta(z')}/32}(z))}+\eta(z')^{\frac{1-2\mu}{2}}\|\partial_nv\|_{L^{\infty}(\mathtt{Q}_{\delta,\sqrt{\eta(z')}/32}(z))}\\
\leq&\, C \eta(z')^{\frac{1-2\mu}{2}}\|\nabla v\|^{*}_{C^{\mu}(\mathtt{Q}_{\delta,\sqrt{\eta(z')}/32}(z))}.
\end{align*}
Hence
\begin{align*}
&[\bar{F}_{\sqrt{\eta}}^i]_{C^{\mu}(B'_{1}(0'))}\leq C \eta(z')^{\frac{1+\mu}{2}}[\bar{F}^i]_{C^{\mu}(B'_{\sqrt{\eta(z')}/32}(z'))}\leq C \eta(z')^{1-\frac{\mu}{2}}\|\nabla v\|^{*}_{C^{\mu}(\mathtt{Q}_{\delta,\sqrt{\eta(z')}/32}(z))}, 
\end{align*}
and
$$\|G_{\sqrt{\eta}}\|_{L^{\infty}(B'_{1}(0'))}\leq C\|\psi\|_{L^{\infty}(\Omega_{\sqrt{\eta(z')}/32}(z))}.$$

Since \eqref{aii}, it follows that $\lambda\leq\,a_{\sqrt{\eta}}^{ii}(w')\leq\Lambda$, for $|w'|\leq1$, and $[a_{\sqrt{\eta}}^{ii}]_{C^{\mu}(Q_1)}\leq C$. By the standard elliptic theory as before,  
\begin{align}\label{Calpha_est}
&\|\nabla(\bar{v}_{\sqrt{\eta}}-a)\|_{C^{\mu}(B'_{1/2}(0'))}\nonumber\\
\leq&\, C\left(\|(\bar{v}_{\sqrt{\eta}}-a)\|_{L^{2}(B'_{1}(0'))}+[\bar{F}_{\sqrt{\eta}}^i]_{C^{\mu}(B'_{1}(0'))}+\|G_{\sqrt{\eta}}\|_{L^{\infty}(B'_{1}(0'))}\right).
\end{align}
Taking $a=\underset{\mathtt{Q}_{\delta,\sqrt{\eta(z')}/32}(z)}\inf~v$, and by rescaling, 
\begin{align}\label{a}
&\sqrt{\eta(z')} \|\nabla\bar{v}\|_{L^{\infty}(B'_{\sqrt{\eta(z')}/64}(z'))}+\sqrt{\eta(z')}^{1+\mu} [\nabla\bar{v}]_{C^{\mu}(B'_{\sqrt{\eta(z')}/64}(z'))} \nonumber\\
\leq&\, C\Big(\,\underset{\mathtt{Q}_{\delta,\sqrt{\eta(z')}/32}(z)}\osc~v+\eta(z')^{1-{\mu}/{2}}\|\nabla v\|^{*}_{C^{\mu}(\mathtt{Q}_{\delta,\sqrt{\eta(z')}/32}(z))}+\|\psi\|_{L^{\infty}(\Gamma^{-}_{\delta,\sqrt{\eta(z')}/32}(z))}\Big).
\end{align}
 This implies that  \eqref{estbarvv1} holds.
\end{proof}

\subsection{Proof of Proposition \ref{prop_mainthm}}

 By using Lemma \ref{lem_v-barv} and
 $$\|\nabla \bar v\|_{L^2(\mathtt{Q}_{\delta,\eta(z')/8}(z))}\leq\,C\eta(z')^{n/2}\|\nabla \bar v\|_{L^{\infty}(\mathtt{Q}_{\delta,\eta(z')/8}(z))},$$
 we have
\begin{align*}
&\|\nabla v\|_{L^2(\mathtt{Q}_{\delta,\eta(z')/8}(z))}\nonumber\\
\leq &\,\|\nabla (v-\bar v)\|_{L^2(\mathtt{Q}_{\delta,\eta(z')/8}(z))}+\|\nabla \bar v\|_{L^2(\mathtt{Q}_{\delta,\eta(z')/8}(z))}\\
\leq&\,C\eta(z')^{n/2}\Big(\|\nabla v\|_{L^{2}(\mathtt{Q}_{\delta, \sqrt{\eta(z')}/64}(z))} +\|\nabla\bar v\|_{L^{\infty}(B'_{ \sqrt{\eta(z')}/64}(z'))}+\|\psi\|_{L^{\infty}(\mathtt{\Gamma}^{-}_{\delta,  \sqrt{\eta(z')}/64}(z))}\Big).
\end{align*}
Then, by Lemma \ref{lem_estv},
 \begin{align}\label{vbarv}
 \begin{split}
 &\|\nabla v\|_{C^{\mu}(\mathtt{Q}_{\delta, \eta(z')/16}(z))}^*\\
\leq &\,  C\eta(z')^{-n/2}\|\nabla v\|_{L^2(\mathtt{Q}_{\delta,\eta(z')/8}(z))}+C \|\psi\|_{C^{\alpha}(\mathtt{\Gamma}^{-}_{\delta, \eta(z')/8}(z))}^*\\
\leq &\,C\Big(\|\nabla v\|_{L^{2}(\mathtt{Q}_{\delta, \sqrt{\eta(z')}/64}(z))}  +\|\nabla\bar v\|_{L^{\infty}(B'_{ \sqrt{\eta(z')}/64}(z'))}+\|\psi\|_{C^{\alpha}(\mathtt{\Gamma}^{-}_{\delta,  \sqrt{\eta(z')}/64}(z))}^* \Big).
\end{split}
\end{align}

For the first term on the right hand side of \eqref{vbarv}, we have 
\begin{lemma}\label{lem35}
\begin{align}\label{lem35-1}
\|\nabla v\|_{L^{2}(\mathtt{Q}_{\delta, \sqrt{\eta(z')}/64}(z))} \leq
\,C\Big(\underset{\Omega_{ \sqrt{\eta(z')}/32}(z)}\osc\,u+\|\phi\|^{*}_{C^{\alpha}(\Gamma^{-}_{\delta,  \sqrt{\eta(z')}/32}(z))}\Big).
\end{align}
\end{lemma}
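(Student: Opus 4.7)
The plan is to prove Lemma \ref{lem35} via a standard Caccioppoli-type energy estimate for the equation \eqref{equ_v1} satisfied by $v$, handling the inhomogeneous Neumann data on $\mathtt{\Gamma}^-$ by an auxiliary cut-off in the $y_n$ direction (the $\zeta$-function trick already used in Lemma \ref{lem_v-barv}). First I would take a cut-off $\xi = \xi(y')$ with $\xi \equiv 1$ on $B'_{\sqrt{\eta(z')}/64}(z')$, $\xi \equiv 0$ outside $B'_{\sqrt{\eta(z')}/32}(z')$ and $|\nabla_{y'}\xi| \leq C/\sqrt{\eta(z')}$, set $c := \inf_{\Omega_{\sqrt{\eta(z')}/32}(z)} u$ so that $|v - c| \leq \underset{\Omega_{\sqrt{\eta(z')}/32}(z)}\osc\,u$ on $\mathrm{supp}\,\xi$, and test the equation against $(v - c)\xi^2$. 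Integration by parts over $\mathtt{Q}_{\delta,\sqrt{\eta(z')}/32}(z)$, using $a^{nj}\partial_j v = 0$ on $\mathtt{\Gamma}^+$ and $-a^{nj}\partial_j v = \psi$ on $\mathtt{\Gamma}^-$, gives
\begin{equation*}
\int_{\mathtt{Q}} a^{ij}\partial_j v\,\partial_i v\,\xi^2\,dy + 2\int_{\mathtt{Q}} a^{ij}\partial_j v\,(v-c)\xi\,\partial_i\xi\,dy = \int_{\mathtt{\Gamma}^-}\psi\,(v-c)\xi^2\,dS.
\end{equation*}

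For the left hand side I would verify ellipticity on the relevant range of $z$: by Lemma \ref{lem21}, $a^{ii} \in [\lambda,\Lambda]$ while $|a^{nj}| \leq C\sqrt{\eta(z')}$ for $1 \leq j \leq n-1$, so Cauchy–Schwarz absorbs the off-diagonal contribution and $a^{ij}\xi_i\xi_j \geq (\lambda/2)|\xi|^2$ whenever $|z'| \leq \bar R$ (this is where the smallness of $\bar R$ enters). Then Young's inequality applied to the cross term absorbs $\tfrac{\lambda}{8}\int |\nabla v|^2 \xi^2$ into the LHS and leaves $C\int |v-c|^2|\nabla\xi|^2 \leq C\eta(z')^{-1}\bigl(\underset{\Omega_{\sqrt{\eta(z')}/32}(z)}\osc\,u\bigr)^2 |\mathtt{Q}_{\delta,\sqrt{\eta(z')}/32}(z)| \leq C\eta(z')^{(n-1)/2}(\osc\,u)^2$, using that $|\mathtt{Q}_{\delta,s}| \sim \eta(z') s^{n-1}$.

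For the boundary integral I would reuse the device from the proof of Lemma \ref{lem_v-barv}: choose $\zeta = \zeta(y_n)$ with $\zeta(-\delta(z')) = 1$, $\zeta(\delta(z')) = 0$, $|\zeta'| \leq C/\delta(z')$, and observe that since $\psi$ and $\xi$ are independent of $y_n$,
\begin{equation*}
\int_{\mathtt{\Gamma}^-}\psi(v-c)\xi^2\,dS = -\int_{\mathtt{Q}} \partial_n\bigl[\psi\,\zeta\,(v-c)\,\xi^2\bigr]dy = -\int_{\mathtt{Q}}\psi\xi^2\zeta\,\partial_n v\,dy - \int_{\mathtt{Q}}\psi\xi^2\zeta'(v-c)dy.
\end{equation*}
The first volume term is controlled by Young's inequality (absorbing $\tfrac{\lambda}{8}\int|\nabla v|^2\xi^2$ and leaving $C\|\psi\|_\infty^2|\mathtt{Q}| \leq C\eta(z')^{(n+1)/2}\|\phi\|_\infty^2$), while the second is bounded directly by $C\delta(z')^{-1}\|\psi\|_\infty \osc\,u \cdot |\mathtt{Q}| \leq C\eta(z')^{(n-1)/2}\|\phi\|_\infty\osc\,u$. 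Combining everything yields
\begin{equation*}
\|\nabla v\|_{L^2(\mathtt{Q}_{\delta,\sqrt{\eta(z')}/64}(z))}^2 \leq C\,\eta(z')^{(n-1)/2}\Bigl(\bigl(\underset{\Omega_{\sqrt{\eta(z')}/32}(z)}\osc\,u\bigr)^2 + \|\phi\|_{L^\infty(\Gamma^-_{\sqrt{\eta(z')}/32}(z))}^2\Bigr),
\end{equation*}
and since $n \geq 3$ and $\eta(z') \leq 1$, the prefactor $\eta(z')^{(n-1)/2}$ is bounded, giving \eqref{lem35-1} after noting $\|\phi\|_\infty \leq \|\phi\|_{C^\alpha}^*$.

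The main technical obstacle is the boundary term: without the $\zeta$-trick one cannot directly absorb $\int_{\mathtt{\Gamma}^-}\psi(v-c)\xi^2$ into the energy, and the competing scales ($\delta(z') \sim \eta(z')$ in the thin direction versus $\sqrt{\eta(z')}$ transversally) must balance so that the factor $\delta(z')^{-1}$ produced by $\zeta'$ cancels one power of $\delta(z')$ from $|\mathtt{Q}|$; the other delicate point is verifying ellipticity despite the $O(\sqrt{\eta(z')})$ off-diagonal entries, which restricts $z$ to $|z'| < \bar R$.
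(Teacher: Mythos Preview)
Your Caccioppoli argument is correct and actually more self-contained than the paper's proof. The paper works instead in the $x$-variables: it uses $\Delta u = 0$ in $\Omega_{\sqrt{\eta(z')}/64}(z)$ and integrates by parts to write $\int|\nabla u|^2 = \int_{\partial}(\partial_\nu u)(u-\inf u)$, then splits the boundary integral into $\Gamma^+$ (zero), $\Gamma^-$ (controlled by $\|\phi\|_\infty \cdot \osc u$), and the lateral part $\{|x'-z'| = \sqrt{\eta(z')}/64\}$, where it invokes the already-proven pointwise bound $|\nabla u(x)|\leq C\eta(x')^{-1}\osc u + C\|\phi\|^*$ from Remark \ref{u-1} and checks that the lateral surface measure absorbs the $\eta^{-1}$. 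Your route avoids this dependence on Lemma \ref{lem_estv}: the cutoff kills the lateral boundary entirely, and the $\zeta$-trick handles $\mathtt{\Gamma}^-$ internally. The price is that you must verify ellipticity of $(a^{ij})$ on the support of $\xi$ (hence the smallness of $\bar R$), whereas the paper's argument uses only that $\Delta u = 0$. Both give the same final bound, and your estimate in fact comes with the slightly sharper prefactor $\eta(z')^{(n-1)/4}$ before discarding it.
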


\begin{proof}
Since in cylinder $\mathtt{Q}_{\delta, \sqrt{\eta(z')}/64}(z)$, $|\nabla v(y)|\leq C |\nabla u(x)|$, and $|det_x y|\leq C,$ it follows that, for $|z'|<\bar{R}$,  
\begin{align}\label{lem35-2}
\int_{\mathtt{Q}_{\delta, \sqrt{\eta(z')}/64}(z)}|\nabla v(y)|^2dy
\leq\, C\int_{\Omega_{ \sqrt{\eta(z')}/64}(z)}|\nabla u(x)|^2dx.
\end{align}
Since $\Delta u=0$ in $\Omega_{ \sqrt{\eta(z')}/64}(z)$, it follows that
\begin{align*}
&\int_{\Omega_{ \sqrt{\eta(z')}/64}(z)}|\nabla u(x)|^2=\int_{\Omega_{ \sqrt{\eta(z')}/64}(z)}|\nabla (u(x)-\inf_{\Omega_{ \sqrt{\eta(z')}/64}(z)} u)|^2\\
=&\int_{\partial\Omega_{ \sqrt{\eta(z')}/64}(z)}\frac{\partial u}{\partial\nu}(x)\Big(u(x)-\inf_{\Omega_{ \sqrt{\eta(z')}/64}(z)} u\Big)\\
\leq&\,C(\underset{\Omega_{ \sqrt{\eta(z')}/64}(z)}\osc\,u)\Big(\int_{\Gamma^{-}_{ \sqrt{\eta(z')}/64}(z)}|\phi|+\int_{\{|x'-z'|= \sqrt{\eta(z')}/64\}\cap\Omega}|\nabla_{x'}u|\Big).
\end{align*}
For $|x'-z'|\leq\sqrt{\eta(z')}/64$, in order to apply Lemma \ref{lem_estv}, we set $\Omega_{ \eta(x')/8}(x)\subset\Omega_{ \sqrt{\eta(z')}/32}(z)$ for sufficiently small $z$, then by means of \eqref{lem1-1},  
\begin{align*}
|\nabla_{x'}u(x)|\leq&\,C\Big(\eta(x')^{-1}\underset{\Omega_{ \eta(x')/8}(x)}\osc\,u+\|\phi\|^*_{C^{\alpha}(\Gamma^{-}_{ \eta(x')/8}(x))}\Big)\\
\leq&\,C\Big(\eta(z')^{-1}\underset{\Omega_{ \sqrt{\eta(z')}/32}(z)}\osc\,u+\|\phi\|^*_{C^{\alpha}(\Gamma^{-}_{ \sqrt{\eta(z')}/32}(z))}\Big).
\end{align*}
Hence,
\begin{align*}
&\int_{\{|x'-z'|= \sqrt{\eta(z')}/64\}\cap\Omega}|\nabla_{x'}u(x)|\\\leq&\,C(\underset{\Omega_{ \sqrt{\eta(z')}/32}(z)}\osc\,u)\int_{\{|x'-z'|= \sqrt{\eta(z')}/64\}\cap\Omega}\eta(z')^{-1}dS+C\|\phi\|^*_{C^{\alpha}(\Gamma^{-}_{ \sqrt{\eta(z')}/32}(z))}\\
\leq&\,C\Big(\underset{\Omega_{ \sqrt{\eta(z')}/32}(z)}\osc\,u+\|\phi\|^*_{C^{\alpha}(\Gamma^{-}_{ \sqrt{\eta(z')}/32}(z))}\Big).
\end{align*}
Combining this with \eqref{lem35-2} implies that \eqref{lem35-1} holds.
\end{proof}

\begin{proof}[Proof of Proposition \ref{prop_mainthm}]
For $x\in\Omega_{\bar{R}}$, combining \eqref{lem35-1}, \eqref{vbarv} with \eqref{estbarvv1} in Lemma \ref{lem34}, and by virtue of Lemma \ref{uvsqrtdelta}, back to $u$, leads to
\begin{equation}\label{estu1}
\begin{split}
\|\nabla u\|_{C^{\mu}(\Omega_{ \eta(z')/16}(z))}^*\leq& C\eta(z')^{\frac{1-\mu}{2}}\|\nabla u\|_{C^{\mu}(\Omega_{\sqrt{\eta(z')}/32}(z))}^*\\
&+C\eta(z')^{-1/2}\Big(\underset{\Omega_{\sqrt{\eta(z')}/32}(z)}\osc\,u+ \|\phi\|^*_{C^{\alpha}(\Gamma^{-}_{\sqrt{\eta(z')}/32}(z))}\Big).
\end{split}
\end{equation}

Furthermore, it is clear that
\begin{equation}\label{sup0}
\|\nabla u\|_{C^{\mu}(\Omega_{\sqrt{\eta(x')}/32}(x))}^* \leq C \sup_{z\in\Omega_{\sqrt{\eta(x')}/32}(x)}\|\nabla u\|_{C^{\mu}(\Omega_{\eta(z')/16}(z))}^*.
\end{equation}
Then, substituting \eqref{estu1} into \eqref{sup0}, and using \eqref{ryz} yields
\begin{equation}\label{estu2}
\begin{split}
\|\nabla u&\|_{C^{\mu}(\Omega_{\sqrt{\eta(x')}/32}(x))}^*\leq C\eta(x')^{\frac{1-\mu}{2}}\sup_{z\in\Omega_{\sqrt{\eta(x')}/32}(x)}\|\nabla u\|_{C^{\mu}(\Omega_{ \sqrt{\eta(z')}/32}(z))}^*\\
&+C\eta(x')^{-1/2}\sup_{z\in\Omega_{\sqrt{\eta(x')}/32}(x)}\Big(\underset{\Omega_{ \sqrt{\eta(z')}/32}(z)}\osc\,u+ \|\phi\|^*_{C^{\alpha}(\Gamma^{-}_{ \sqrt{\eta(z')}/32}(z))}\Big).
\end{split}
\end{equation}
Putting the estimate of $\|\nabla u\|_{C^{\mu}(\Omega_{\sqrt{\eta(x')}/32}(x))}^*$ in \eqref{estu2} into \eqref{estu1} again, we have
\begin{equation}\label{estu3}
\begin{split}
\|\nabla u\|_{C^{\mu}(\Omega_{ \eta(x')/16}(x))}^*\leq& C\eta(x')^{1-\mu}\sup_{z\in\Omega_{\sqrt{\eta(x')}/32}(x)}\|\nabla u\|_{C^{\mu}(\Omega_{ \sqrt{\eta(z')}/32}(z))}^*\\
&+C\eta(x')^{-1/2}\sup_{z\in\Omega_{\sqrt{\eta(x')}/32}(x)}\Big(\underset{\Omega_{ \sqrt{\eta(z')}/32}(z)}\osc\,u+ \|\phi\|^*_{C^{\alpha}(\Gamma^{-}_{ \sqrt{\eta(z')}/32}(z))}\Big).
\end{split}
\end{equation}

By means of \eqref{ryz}, 
 $$\underset{z\in\Omega_{\sqrt{\eta(x')}/32}(x)}\cup\Omega_{ \sqrt{\eta(z')}/32}(z)\subset \Omega_{\frac{1}{8}\sqrt{\eta(x')}}(x).$$  
Then it follows from Remark \ref{u-1} that
\begin{equation}\label{estu4}
\begin{split}
 \sup_{z\in\Omega_{\sqrt{\eta(x')}/32}(x)}\|\nabla u\|_{C^{\mu}(\Omega_{ \sqrt{\eta(z')}/32}(z))}^*\leq& C\|\nabla u\|_{C^{\mu}(\Omega_{\frac{1}{8}\sqrt{\eta(x')}}(x))}^*\\
\leq&\, \eta(x')^{-1}\Big(\underset{\Omega_{\frac{1}{4}\sqrt{\eta(x')}}(x)}\osc\,u+ \|\phi\|^*_{C^{\alpha}(\Gamma^{-}_{\frac{1}{4}\sqrt{\eta(x')}}(x))}\Big).
\end{split}
\end{equation}
We substitute \eqref{estu4} into \eqref{estu3} to obtain
\begin{equation*}
\|\nabla u\|_{C^{\mu}(\Omega_{ \eta(x')/16}(x))}^*\leq C\Big(\eta(x')^{-\mu}+\eta(x')^{-{1}/{2}}\Big)\Big(\underset{\Omega_{\frac{1}{4}\sqrt{\eta(x')}}(x)}\osc\,u+ \|\phi\|^*_{C^{\alpha}(\Gamma^{-}_{\frac{1}{4}\sqrt{\eta(x')}}(x))}\Big).
\end{equation*}
The proof of Proposition \ref{prop_mainthm} is finished, because of the assumption $0<\mu\leq\frac{1}{2}$.
 
\end{proof}

 \section{Proof of Proposition \ref{ubound} }

In Section \ref{section3}, it has been shown that the estimate of $\nabla u$ is closely related with the estimate of $\nabla \bar u$. 
Recalling \eqref{barequ_v11} and using \eqref{barv}, $\bar u$ satisfies
\begin{equation}\label{barv2}
\sum_{i=1}^{n-1}\partial_i(\delta(x')\partial_i\bar u(x'))+\sum_{i=1}^{n-1}\partial_i\tilde F^{i}(x')= \psi(x'),\quad\mbox{in}~B'_{2R}(0'),
\end{equation}
where $\kappa(\varepsilon+|x'|^2)\leq\delta(x')\leq \frac{1}{\kappa}(\varepsilon+|x'|^2),$
and
\begin{equation}\label{DIVF}
\tilde F^i(x'):=\int_{g(x')}^{\varepsilon+f(x')}\Big(\frac{x_n-\varepsilon-f(x')}{\delta(x')}\partial_{x_i}g(x')-\frac{x_n-g(x')}{\delta(x')}\partial_{x_i}f(x')\Big)\partial_nu(x)dx_n.
\end{equation}
In this section, we first study elliptic equation with degenerate coefficients, which includes the type of \eqref{barv2}. Then combining  Proposition \ref{prop_mainthm} and the global and local properties of the solution to the degenerate elliptic equation, we can prove Proposition \ref{ubound}.

\subsection{Some estimates on the degenerate elliptic equations}

  For simplicity, we now introduce some notations.  For $  t \in \bR$ , we introduce the norm
$$
\| H \|_{\varepsilon, t,B'_{R}}: = \sup_{y' \in B'_{R}} \frac{|H(y')|}{ (\varepsilon + |y'|^2)^{t}}.
$$
We will adapt these notations throught this paper. 

\begin{prop}\label{zero}
For $n\geq3$, let $w\in H^1(B_{\rho})$ be a solution to 
\begin{equation}
\dv\Big[\big(\varepsilon I+A(x')\big)\nabla w(x')\Big]=\dv F+ G \quad\text{in}\,\,B_{\rho} \subset \bR^{n-1},
\end{equation}
where the $(n-1)\times(n-1)$ matrix $A(x')=(A^{ij}(x'))$ is measurable, symmetric and satisfies 
\begin{equation*}
\frac{1}{A}|x'|^2|\xi|^2\leq \xi^{T}A(x')\xi,\quad |A^{ij}(x')|\leq A|x'|^2 \quad\text{for}\,\, \forall \xi\in\bR^{n-1},\,\forall x'\in B_{\rho}
\end{equation*}
for some positive constant $A$.
If $F\in L^{\infty}(B_{\rho})$ and $G\in L^{\infty}(B_{\rho})$ satisfy
\begin{equation*}
\|F\|_{\varepsilon,\frac{\sigma+1}{2},B_{\rho}'}+\|G\|_{\varepsilon,\frac{\sigma }{2},B_{\rho}'}<\infty
\end{equation*}
for some $\sigma>0$, then we have
\begin{equation}
\|w\|_{L^{\infty}(B_{\rho}')}\leq \|w\|_{L^{\infty}(\partial B_{\rho}')}+\frac{C \rho^{\sigma}}{2^{\sigma}-1} (\|F\|_{\varepsilon,\frac{\sigma+1}{2},B_{\rho}'}+\|G\|_{\varepsilon,\frac{\sigma }{2},B_{\rho}'}),
\end{equation}
where the constant $C$ depends only on $A$ and $n$, and is in particular independent of $\varepsilon$ and $\rho$.
\end{prop}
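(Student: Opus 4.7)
My plan is to establish this by a weak comparison argument against an explicit radial supersolution. Setting
$$M := \|F\|_{\varepsilon,(\sigma+1)/2,B'_\rho} + \|G\|_{\varepsilon,\sigma/2,B'_\rho},$$
by linearity and the substitution $w\mapsto -w$ it is enough to show the one-sided estimate $\sup_{B'_\rho}w \le \|w\|_{L^\infty(\partial B'_\rho)} + \frac{C\rho^\sigma}{2^\sigma-1}M$. For the barrier I would take
$$\Phi(x') := \|w\|_{L^\infty(\partial B'_\rho)} + KM\bigl[(\varepsilon+\rho^2)^{\sigma/2}-(\varepsilon+|x'|^2)^{\sigma/2}\bigr],$$
with a universal constant $K$ to be pinned down; by construction $\Phi\ge w$ on $\partial B'_\rho$, $\Phi$ is radial, and $\nabla\Phi=-KM\sigma(\varepsilon+|x'|^2)^{\sigma/2-1}x'$.

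The heart of the argument is the verification that $\Phi$ is a weak supersolution, i.e.,
$$\int_{B'_\rho}(\varepsilon I+A(x'))\nabla\Phi\cdot\nabla\eta \;\ge\; \int_{B'_\rho}F\cdot\nabla\eta - \int_{B'_\rho}G\,\eta \qquad \forall\,\eta\ge 0,\ \eta\in H^1_0(B'_\rho).$$
Using the lower ellipticity $A^{-1}|x'|^2|\xi|^2\le\xi^T A(x')\xi$ one checks that $x'\cdot(\varepsilon I+A)x'\gtrsim(\varepsilon+|x'|^2)|x'|^2$, so that $(\varepsilon I+A)\nabla\Phi$ points inward with magnitude comparable to $KM\sigma|x'|(\varepsilon+|x'|^2)^{\sigma/2}$. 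Combined with the pointwise bounds $|F|\le M(\varepsilon+|x'|^2)^{(\sigma+1)/2}$ and $|G|\le M(\varepsilon+|x'|^2)^{\sigma/2}$, the supersolution inequality reduces to a purely algebraic comparison between powers of $(\varepsilon+|x'|^2)$ that succeeds once $K$ exceeds a universal multiple of $1/(2^\sigma-1)$.

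Once the supersolution property is in hand, I would test it against $\eta=(w-\Phi)^+\in H^1_0(B'_\rho)$ (admissible since $\Phi\ge w$ on $\partial B'_\rho$) and subtract from the weak equation for $w$, obtaining
$$\int_{\{w>\Phi\}}(\varepsilon I+A)\nabla(w-\Phi)\cdot\nabla(w-\Phi)\;\le\;0,$$
which by ellipticity forces $w\le\Phi$ in $B'_\rho$; the bound then follows from $(\varepsilon+\rho^2)^{\sigma/2}\le 2\rho^\sigma$ when $\varepsilon\le\rho^2$, the complementary case being uniformly elliptic and standard. The main obstacle I anticipate is extracting the sharp prefactor $1/(2^\sigma-1)$ rather than the easier $C/\sigma$ that a naive barrier would give: this requires exploiting the precise identity $\sigma\int_0^\rho s(\varepsilon+s^2)^{\sigma/2-1}\,ds=(\varepsilon+\rho^2)^{\sigma/2}-\varepsilon^{\sigma/2}$ and careful bookkeeping of the absorption constants across the dyadic scales $|x'|\sim\rho/2^k$, all while keeping the argument in the weak energy formulation since $A^{ij}$ is only measurable.
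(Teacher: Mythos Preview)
Your barrier approach has a genuine gap at the step where you claim the supersolution inequality ``reduces to a purely algebraic comparison between powers of $(\varepsilon+|x'|^2)$.'' With $\nabla\Phi = h(r)x'$ radial, the flux is $(\varepsilon I+A)\nabla\Phi = h(r)\bigl(\varepsilon x' + A(x')x'\bigr)$, and the weak inequality
\[
\int (\varepsilon I+A)\nabla\Phi\cdot\nabla\eta \;\ge\; \int F\cdot\nabla\eta - \int G\,\eta
\qquad\text{for all }\eta\ge 0
\]
is equivalent to the distributional statement $-\operatorname{div}\bigl[h(r)(\varepsilon x'+A(x')x')\bigr]\ge -\operatorname{div} F + G$. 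The lower ellipticity $\xi^T A\xi\ge A^{-1}|x'|^2|\xi|^2$ only controls the scalar $x'\cdot A(x')x'$; it says nothing about the tangential part of the vector $A(x')x'$, nor about the divergence of $h(r)A(x')x'$, which involves (distributional) derivatives of the merely measurable entries $A^{ij}$. For a generic admissible $A$ this divergence is not a function at all, so no pointwise algebraic comparison is available. Concretely, take $A(x')=|x'|^2\bigl(I + b(x')M\bigr)$ with $M$ symmetric and $b$ bounded but wildly oscillatory: the term $\partial_i\bigl(h(r)|x'|^2 b(x')M^{ij}x_j\bigr)$ contains $\partial_i b$, which is completely uncontrolled.

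The paper circumvents this by never attempting a global barrier. Instead it proves an annular estimate (Lemma~4.3): on $B_\rho\setminus B_{\rho/8}$ the rescaled coefficients $(\varepsilon I + A(\rho z'))/\rho^2$ are uniformly elliptic with ratio depending only on $A$, so one can split $w=w_1+w_2$ with $w_1$ handled by the weak maximum principle and $w_2\in H^1_0$ handled by a weighted energy estimate (via Caffarelli--Kohn--Nirenberg) followed by the De Giorgi--Moser local boundedness theorem for measurable coefficients. Iterating this annular bound over dyadic radii $|x'|,2|x'|,\dots,2^k|x'|\sim\rho$ sums to the geometric factor $1/(2^\sigma-1)$, and the inner ball $B_{\sqrt\varepsilon}$ is uniformly elliptic and treated directly. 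This route never differentiates $A$; your explicit barrier would work only under additional structure (e.g.\ $A(x')x'$ parallel to $x'$, as when $A=a(|x'|)|x'|^2 I$), which is not assumed.
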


If the hypothesis in Proposition \ref{zero} is futher weakened to allow $\sigma\geq0$, we can establish local oscillation estimates as follows.

\begin{prop}\label{nonzero}
For $n\geq3$, let $w\in H^1(B_{\rho})$ be a solution to 
\begin{equation}
\dv\Big[\big(\varepsilon I+A(x')\big)\nabla w(x')\Big]=\dv F+ G \quad\text{in}\,\,B_{\rho} \subset \bR^{n-1},
\end{equation}
where the $(n-1)\times(n-1)$ matrix $A(x')=(A^{ij}(x'))$ is measurable, symmetric and satisfies 
\begin{equation*}
\frac{1}{A}|x'|^2|\xi|^2\leq \xi^{T}A(x')\xi,\quad |A^{ij}(x')|\leq A|x'|^2 \quad\text{for}\,\, \forall \xi\in\bR^{n-1},\,\forall x'\in B_{\rho},
\end{equation*}
for some positive constant $A$.

(i) For $\sigma=0$, if $F\in L^{\infty}(B_{\rho})$ and $G\in L^{\infty}(B_{\rho})$ satisfy 
$$\|F\|_{\varepsilon,\frac{1}{2},B_{\rho}'}+\|G\|_{\varepsilon,0,B_{\rho}'}<\infty,$$ 
then for $x'\in B_{\frac{\rho}{4}}(0')$, we have
\begin{equation}
\underset{B_{\frac{1}{4}\sqrt{\eta(x')}}(x')}\osc\,w\leq C(\|w\|_{L^{\infty}(\partial B_{\rho}')}+\|F\|_{\varepsilon,\frac{1 }{2},B_{\rho}'}+\|G\|_{\varepsilon,0,B_{\rho}'})   ,
\end{equation}
where $C$ depends only on $A$ and $n$, but independent of $\varepsilon$ and $\rho$.

(ii) For $\sigma>0$, if $F\in L^{\infty}(B_{\rho})$ and $G\in L^{\infty}(B_{\rho})$ satisfy 
$$\|F\|_{\varepsilon, \frac{1+\sigma}{2},B_{\rho}'}+\|G\|_{\varepsilon, \frac{\sigma}{2},B_{\rho}'}<\infty,$$  
then for $x'\in B_{\frac{\rho}{4}}(0')$, we have
\begin{equation}
\underset{B_{\frac{1}{4}\sqrt{\eta(x')}}(x')}\osc\,w\leq C(\|w\|_{L^{\infty}(\partial B_{\rho}')}+\|F\|_{\varepsilon, \frac{1+\sigma}{2},B_{\rho}'}+\|G\|_{\varepsilon, \frac{\sigma}{2},B_{\rho}'})\left(\frac{\eta(x')^{\frac{\tau}{2}}}{\rho^{\tau}}\right)  ,
\end{equation}
where $\tau\in(0,1)$ and $C$ depend only on $A$, $n$ and the lower bound of $\sigma$, but independent of $\varepsilon$ and $\rho$.
\end{prop}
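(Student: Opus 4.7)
\medskip

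\noindent\textbf{Proof sketch for Proposition \ref{nonzero}.}
The plan is to convert the degenerate equation into a \emph{uniformly} elliptic one on a unit-size ball via local rescaling at the natural scale $\sqrt{\eta(x')}$, and then to apply the classical De Giorgi--Nash--Moser oscillation estimate. Fix $x_0' \in B_{\rho/4}'$. On $B_{\sqrt{\eta(x_0')}/2}'(x_0')$ the quantities $\eta(y')$ and $|y'|^2$ are comparable to $\eta(x_0')$ (by the same argument as \eqref{ryz1} of Lemma \ref{lem21}), so the matrix $\varepsilon I + A(y')$ has eigenvalues comparable to $\eta(x_0')$. Setting $\tilde w(z') := w(x_0' + \sqrt{\eta(x_0')}\,z')$ and $\hat A(z') := \eta(x_0')^{-1}(\varepsilon I + A(x_0' + \sqrt{\eta(x_0')}\,z'))$ produces, on $B_{1/2}'$, a uniformly elliptic equation
\[
\dv_{z'}\bigl(\hat A(z')\,\nabla_{z'}\tilde w\bigr) = \dv_{z'} \tilde F + \tilde G,
\]
with ellipticity constants depending only on $A$ and $n$, and with $\|\tilde F\|_{L^\infty(B_{1/2}')} \leq C\,\eta(x_0')^{\sigma/2}\|F\|_{\varepsilon,(1+\sigma)/2,B_\rho'}$ and $\|\tilde G\|_{L^\infty(B_{1/2}')} \leq C\,\eta(x_0')^{\sigma/2}\|G\|_{\varepsilon,\sigma/2,B_\rho'}$. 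Standard De Giorgi theory applied to $\tilde w$ (or Harnack's inequality applied to $M-\tilde w$ and $\tilde w - m$ with $M,m$ the local sup and inf) delivers the basic contraction estimate
\[
\osc_{B_r'(x_0')} w \;\leq\; \theta\,\osc_{B_{2r}'(x_0')} w \,+\, C r^{\sigma}\bigl(\|F\|_{\varepsilon,(1+\sigma)/2,B_\rho'} + \|G\|_{\varepsilon,\sigma/2,B_\rho'}\bigr),
\]
valid whenever the concentric balls lie in $B_\rho'$ and the operator is uniformly elliptic at scale $r$ (i.e.\ $r \lesssim \sqrt{\eta(\bar y')}$ for the relevant center $\bar y'$).

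\medskip

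\noindent For part (ii) with $\sigma > 0$, Proposition \ref{zero} supplies the starting global bound
\[
\|w\|_{L^\infty(B_\rho')} \leq \|w\|_{L^\infty(\partial B_\rho')} + C\rho^{\sigma}\bigl(\|F\|_{\varepsilon,(1+\sigma)/2,B_\rho'} + \|G\|_{\varepsilon,\sigma/2,B_\rho'}\bigr).
\]
Iterate the contraction across dyadic radii $r_k = 2^k \sqrt{\eta(x_0')}$ from $k=0$ up to $K$ with $r_K\sim \rho$. At each step the oscillation is reduced by a factor $\theta \in (0,1)$, while the data contribution forms a convergent geometric sum bounded by $C\rho^\sigma(\|F\| + \|G\|)$. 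After $K \sim \log_2(\rho/\sqrt{\eta(x_0')})$ steps one obtains
\[
\osc_{B_{\sqrt{\eta(x_0')}/4}'(x_0')} w \leq \theta^K \osc_{B_\rho'} w + C\bigl(\|F\| + \|G\|\bigr)\rho^\sigma,
\]
which, with $\tau := -\log_2\theta \in (0,1)$, gives exactly the claimed decay $\eta(x_0')^{\tau/2}/\rho^\tau$ once the $L^\infty$ bound from Proposition \ref{zero} is inserted (and the $\rho^\sigma$ factor is absorbed using $\sigma \geq \tau$ after possibly shrinking $\tau$).

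\medskip

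\noindent Part (i), with $\sigma = 0$, is the main obstacle: Proposition \ref{zero} degenerates since $\rho^\sigma/(2^\sigma - 1) \to \infty$, so no global $L^\infty$ bound is available, and the geometric sum of data contributions above no longer telescopes to something small. The remedy exploits the translation invariance $\osc(w) = \osc(w - c)$: subtract a boundary-level constant and estimate $\|w - c\|_{L^2}$ on $B_{\sqrt{\eta(x_0')}/2}'(x_0')$ by a weighted Caccioppoli/Poincar\'e combination, where the weight $\eta$ provides enough control to close the estimate in terms of $\|w\|_{L^\infty(\partial B_\rho')}$ and the data norms. Plugging this $L^2$ bound into the local De Giorgi $L^\infty$ estimate on the unit-rescaled ball yields the desired $\osc_{B_{\sqrt{\eta(x_0')}/4}'(x_0')} w \leq C(\|w\|_{L^\infty(\partial B_\rho')} + \|F\|_{\varepsilon,1/2,B_\rho'} + \|G\|_{\varepsilon,0,B_\rho'})$. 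The delicate point throughout is that the comparison of the oscillation at scale $\sqrt{\eta(x_0')}$ to data at scale $\rho$ must be done in a way compatible with the non-integrable nature of the coefficient at $\sigma = 0$, which is why the bound stays on a ball of radius exactly $\tfrac14\sqrt{\eta(x_0')}$ and does not carry a decay factor.
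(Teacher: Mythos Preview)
Your iteration scheme has a real gap. The contraction
\[
\osc_{B_r'(x_0')} w \leq \theta\,\osc_{B_{2r}'(x_0')} w + Cr^{\sigma}\bigl(\|F\|+\|G\|\bigr)
\]
is only available when the operator is uniformly elliptic on $B_{2r}'(x_0')$, i.e.\ when $r\lesssim\sqrt{\eta(x_0')}$, as you yourself note. But then you iterate over $r_k=2^k\sqrt{\eta(x_0')}$ up to $r_K\sim\rho$. As soon as $r_k\gtrsim|x_0'|$, the ball $B_{r_k}'(x_0')$ contains points $y'$ with $|y'|\ll|x_0'|$ (and the origin once $r_k>|x_0'|$), so on that ball the eigenvalues of $\varepsilon I+A(y')$ range from $\sim\varepsilon$ to $\sim r_k^2$; the ellipticity ratio is $\sim r_k^2/\varepsilon$, not bounded, and De~Giorgi--Moser gives you nothing with constants independent of $\varepsilon$. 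Thus there is no mechanism in your argument to connect the scale $\sqrt{\eta(x_0')}$ to the scale $\rho$, which is exactly the content of the proposition. The same defect undermines your sketch for~(i): the ``weighted Caccioppoli/Poincar\'e'' step is supposed to control $\|w-c\|_{L^2}$ on $B_{\sqrt{\eta(x_0')}/2}'(x_0')$ in terms of $\|w\|_{L^\infty(\partial B_\rho')}$, but you offer no path from local quantities at $x_0'$ to the boundary of $B_\rho'$, and $w$ itself may blow up like $|\ln\varepsilon|$ in the interior.

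The paper avoids this by working not on balls centered at $x_0'$ but on \emph{annuli centered at the origin}. On $B_{2s}'\setminus B_{s/8}'$ one has $|y'|^2\sim s^2$, so after dividing by $s^2$ the equation is uniformly elliptic there with constants depending only on $A$ and $n$. At each dyadic scale $s$ they split $w=w_1+w_2$ on $B_{2s}'$, with $w_2\in H_0^1(B_{2s}')$ absorbing the data $\dv F+G$; an energy estimate plus the local boundary $L^\infty$ bound (their Lemma~\ref{Linftyboundary}) controls $w_2$ on the annulus by $Cs^{\sigma}$, while Harnack on the annulus (Lemma~\ref{oscillationw}) gives $\osc_{B_s'}w_1\leq\beta\,\osc_{\partial B_{2s}'}w_1$. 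Combining yields
\[
\osc_{B_s'\setminus B_{s/2}'} w \;\le\; \beta\,\osc_{B_{2s}'\setminus B_{s}'} w \;+\; Cs^{\sigma},
\]
which iterates along annuli from $s\sim|x_0'|$ up to $s\sim\rho$. For $\sigma=0$ this produces a uniformly bounded oscillation on every dyadic annulus (even though $w$ itself may diverge), and for $\sigma>0$ it gives the $(\eta(x_0')/\rho^2)^{\tau/2}$ decay with $\tau=\min\{-\log_2\beta,\sigma\}$. The passage from annular oscillation at $s\sim|x_0'|$ to oscillation on $B_{\sqrt{\eta(x_0')}/4}'(x_0')$ then uses only the uniformly elliptic local estimate that you already have.
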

 
The proofs of Proposition \ref{zero} and Proposition \ref{nonzero}  are based on following two lemmas. 

\begin{lemma}\label{Linftyboundary}
For $n\geq3$, let $w\in H^1(B_{\rho})$ be a solution to 
\begin{equation}
\dv\Big[\big(\varepsilon I+A(x')\big)\nabla w(x')\Big]=\dv F+ G \quad\text{in}\,\,B_{\rho} \subset \bR^{n-1},
\end{equation}
where the $(n-1)\times(n-1)$ matrix $A(x')=(A^{ij}(x'))$ is measurable, symmetric and satisfies 
\begin{equation*}
\frac{1}{A}|x'|^2|\xi|^2\leq \xi^{T}A(x')\xi,\quad |A^{ij}(x')|\leq A|x'|^2 \quad\text{for}\,\, \forall \xi\in\bR^{n-1},\,\forall y'\in B_{\rho}
\end{equation*}
for some positive constant $A$.
If $F\in L^{\infty}(B_{\rho})$ and $G\in L^{\infty}(B_{\rho})$, then for $\rho>\sqrt\varepsilon$ we have
\begin{equation}
\|w\|_{L^{\infty}(B_{\rho}\backslash B_{\frac{1}{8}\rho})}\leq \|w\|_{L^{\infty}(\partial B_{\rho}')}+C\rho^{\sigma}(\|F\|_{\varepsilon, \frac{\sigma+1}{2},B_{\rho}'}+\|G\|_{\varepsilon,\frac{\sigma}{2},B_{\rho}'}),
\end{equation}
for some $\sigma\geq0$, where the constant $C$ depends only on $A$ and $n$, and is in particular independent of $\varepsilon$ and $\rho$.
\end{lemma}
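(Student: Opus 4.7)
The plan is to reduce to unit scale by rescaling and then run a De~Giorgi truncation argument that exploits the fact that the degeneracy of $\varepsilon I + A$ is confined to an arbitrarily small neighborhood of the origin. Define $v(y')=w(\rho y')$, $\bar F(y')=\rho^{-1}F(\rho y')$, $\bar G(y')=G(\rho y')$, $\tilde\varepsilon=\varepsilon/\rho^{2}$, and $\tilde A^{ij}(y')=\rho^{-2}A^{ij}(\rho y')$. Since $\rho>\sqrt\varepsilon$ one has $\tilde\varepsilon\leq 1$; a direct computation shows that $\tilde A$ inherits the same ellipticity profile (with $|y'|^{2}$), that $v$ satisfies the correspondingly rescaled equation on $B_{1}$, and that $\|\bar F\|_{\tilde\varepsilon,(1+\sigma)/2,B_{1}'}=\rho^{\sigma}\|F\|_{\varepsilon,(1+\sigma)/2,B_{\rho}'}$ with the analogous identity for $\bar G$. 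It therefore suffices to establish the scale-invariant version of the inequality on $B_{1}$; the factor $\rho^{\sigma}$ in the conclusion reappears automatically when the rescaling is undone.

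Write $K:=\|v\|_{L^{\infty}(\partial B_{1})}$, let $\eta$ be a radial cutoff with $\eta\equiv 1$ on $B_{1}\setminus B_{1/8}$, $\eta\equiv 0$ on $B_{1/16}$, and $|\nabla\eta|\leq C$, and for each $k\geq K$ test the weak form of the equation against $\eta^{2}(v-k)^{+}$, which is admissible because $(v-k)^{+}\in H^{1}_{0}(B_{1})$. Using the ellipticity lower bound $(\tilde\varepsilon I+\tilde A)\xi\cdot\xi\geq(\tilde\varepsilon+|y'|^{2}/A)|\xi|^{2}$, a weighted Cauchy--Schwarz on the $\bar F$-contribution, and the pointwise size controls $|\bar F|\leq\|\bar F\|(\tilde\varepsilon+|y'|^{2})^{(1+\sigma)/2}$ and $|\bar G|\leq\|\bar G\|(\tilde\varepsilon+|y'|^{2})^{\sigma/2}$, I would derive a weighted Caccioppoli-type inequality
\begin{equation*}
\int_{B_{1}}\eta^{2}\bigl(\tilde\varepsilon+|y'|^{2}\bigr)|\nabla(v-k)^{+}|^{2}
\;\leq\; C\!\int_{\mathrm{supp}\,\nabla\eta}(v-k)_{+}^{2}
+C\bigl(\|\bar F\|^{2}+\|\bar G\|^{2}\bigr)|A_{k}|^{1-2/2^{*}},
\end{equation*}
where $A_{k}=\{v>k\}\cap B_{1}$. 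On $\mathrm{supp}\,\eta$ one has $|y'|\geq 1/16$, hence $\tilde\varepsilon+|y'|^{2}\sim 1$, so this weighted energy is comparable to $\|\nabla[\eta(v-k)^{+}]\|^{2}_{L^{2}}$. Combined with the Sobolev inequality applied to $\eta(v-k)^{+}\in H^{1}_{0}(B_{1})$, the standard De~Giorgi recursion on the dyadic levels $k_{j}=K+M(1-2^{-j})$ then forces $|A_{K+M}\cap(B_{1}\setminus B_{1/8})|=0$ once $M$ is a universal multiple of $\|\bar F\|+\|\bar G\|$. Running the same argument for $-v$ and undoing the scaling yields the stated inequality.

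The principal obstacle is the intermediate term $\int_{\mathrm{supp}\,\nabla\eta}(v-k)_{+}^{2}$, which lives on the transition annulus $B_{1/8}\setminus B_{1/16}$ where no pointwise bound on $v$ is available a priori. I would handle it by iterating the Caccioppoli estimate through a nested chain of cutoffs $\eta_{0}\geq\eta_{1}\geq\cdots$, each supported slightly farther from the origin; this transports control of $(v-K)^{+}$ inward from $\partial B_{1}$ through a finite sequence of uniformly elliptic annular strips and eventually absorbs the right-hand side into the energy on the left, at which point the De~Giorgi machinery closes. Because $\tilde\varepsilon\leq 1$ throughout, the constants produced depend only on $A$, $n$ and $\sigma$, as the statement requires.
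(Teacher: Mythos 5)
Your route is genuinely different from the paper's. The paper first splits $w = w_1 + w_2$, where $w_1$ is the $(\varepsilon I + A)$-harmonic replacement of $w$ in $B_\rho$ (controlled outright by the maximum principle) and $w_2 \in H_0^1(B_\rho)$ carries the data $\dv F + G$. For $w_2$, after rescaling, they test against $\tilde w_2$ itself (admissible because $\tilde w_2 \in H_0^1(B_1)$, so no cutoff is needed), use the Caffarelli--Kohn--Nirenberg inequality to absorb the $G$-contribution, obtain a \emph{global} weighted energy bound $\int_{B_1}(\tilde\varepsilon + |z'|^2)|\nabla\tilde w_2|^2 \leq C\rho^{2\sigma}$, restrict to $B_1\setminus B_{1/16}$ where the operator is uniformly elliptic, apply Poincar\'e (valid since $\tilde w_2 = 0$ on $\partial B_1$), and finish with the De~Giorgi--Moser local boundedness near the boundary (Theorem~8.25 in Gilbarg--Trudinger). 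You instead keep $w$ intact and run a De~Giorgi truncation with levels $k \geq K$ and a cutoff $\eta$ vanishing near the origin.

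There is a real gap in your argument, and you identify it yourself: the Caccioppoli inequality produced by testing against $\eta^2(v-k)^+$ contains the transition-annulus term $\int_{\mathrm{supp}\,\nabla\eta}(v-k)_+^2$, on which you have no a priori control. Your proposed fix --- ``iterating the Caccioppoli estimate through a nested chain of cutoffs'' --- does not close this. The De~Giorgi recursion needs a finite \emph{starting value}, i.e.\ a bound on $\int_{B_1\setminus B_{1/16}}(v-K)_+^2$, in order to pick $M$ large enough to drive the iterates to zero; shifting the support of the cutoff one thin shell at a time merely relocates the same uncontrolled quantity without ever producing that initial bound. What is actually needed before any truncation/iteration is a \emph{global} weighted energy estimate, which you can get by testing the equation against $(v-K)^+$ itself with \emph{no} cutoff (this is admissible because $(v-K)^+ \in H_0^1(B_1)$), together with CKN (or a comparable weighted inequality) for the $\bar G$ term --- this step is entirely absent from your write-up. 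Once that global energy bound and the resulting Poincar\'e estimate on the annulus are in hand, your De~Giorgi machinery would indeed close and give the stated conclusion; alternatively, the paper's decomposition bypasses truncation altogether by pushing the boundary data into $w_1$ via the maximum principle. As written, your proof is not complete.
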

\begin{proof}
Without loss of generality, we can assume that $$\|F\|_{\varepsilon,\frac{\sigma+1}{2},B_{\rho}'}+\|G\|_{\varepsilon,\frac{\sigma }{2},B_{\rho}'}\leq 1.$$ 
We decompose $w=w_1+w_2$ in $B_{\rho}(0'),$ where $w_2\in H_{0}^1(B_{\rho}(0'))$ satisfies 
\begin{equation}
\dv\Big[\big(\varepsilon I+A(y')\big)\nabla w_2(y')\Big]=\dv F+ G \quad\text{in}\,\,B_{\rho} \subset \bR^{n-1}.
\end{equation}
Then $w_1$ satisfies
\begin{equation}
\begin{cases}
\dv\Big[\big(\varepsilon I+A(y')\big)\nabla w_1(y')\Big]=0 &\quad\text{in}\,\,B_{\rho} \subset \bR^{n-1}\\
w_1(y')=w(y') &\quad\text{on}\,\,\partial B_{\rho} .
\end{cases}
\end{equation}
By the maximum principle, we have
\begin{equation}\label{w1}
\|w_1\|_{L^{\infty}(B_{\rho}(0'))}\leq \|w\|_{L^{\infty}(\partial B_{\rho}(0'))}.
\end{equation}
For $w_2$, we perform a change of variables by setting $z'=\frac{y'}{\rho}$, $\tilde w_2(z')=w_2(y')$, $\tilde F(z')=\frac{F(y')}{\rho}$ and $\tilde G(z') =G(y').$ Then $\tilde w_2\in H_0^1(B_1(0'))$ satisfies 
\begin{equation}\label{tildew2}
\dv\Big[\big(\frac{\varepsilon I+A(\rho z')}{\rho^2}\big)\nabla \tilde w_2(z')\Big]=\dv \tilde F+ \tilde G \quad\text{in}\,\,B_{1} \subset \bR^{n-1}.
\end{equation}
Multiplying $\tilde w_2$ in the equation \eqref{tildew2} and integrating by parts in equation \eqref{tildew2} yield,
\begin{equation}\label{424}
\begin{split}
&\quad\int_{B_1}\big(\frac{\varepsilon+\rho^2|z'|^2}{\rho^2}\big)|\nabla \tilde w_2(z')|^2 dz'\\
&\leq C\int_{B_1}\frac{|F(\rho z')|}{\rho}|\nabla \tilde w(z')|dz'+C\int_{B_1}|G(\rho z')\tilde w(z')|dz'\\
&\leq \beta\int_{B_1}\big(\frac{\varepsilon+\rho^2|z'|^2}{\rho^2}\big)|\nabla \tilde w_2(z')|^2 dz'+C_{\beta}\int_{B_1}\frac{|F(\rho z')|^2}{ \varepsilon+\rho^2|z'|^2}dz'\\
&\quad+\beta\int_{B_1}|z'|^{\frac{2(n-1)}{n+1}}|\tilde w_2(z')|^2 dz'+C_{\beta}\int_{B_1}|z'|^{-\frac{2(n-1)}{n+1}}|G(\rho z')|^2 dz'.
\end{split}
\end{equation}
Because $\tilde w_2\in H_0^1(B_1),$ by H\"older inequality and the following Caffarelli-Kohn-Nirenberg inequality in \cite{CKN} in $\bR^{n-1},$ $$\|w\|_{L^{\frac{2(n+1)}{n-1}}(B_1,|x'|^2dx')}\leq C\|\nabla w\|_{L^2(B_1,|x'|^2dx')}\quad \forall w\in H_0^1(B_1,|x'|^2dx') ,$$ we know that 
\begin{equation}\label{425}
\begin{split}
\int_{B_1}|z'|^{\frac{2(n-1)}{n+1}}|\tilde w_2(z')|^2dz'&\leq C(n)\left( \int_{B_1}|z'|^2|| \tilde w_2(z')|^{\frac{2(n+1)}{n-1}}dz'\right)^{\frac{n-1}{n+1}}\\
&\leq C(n)\int_{B_1}|z'|^2|\nabla \tilde w_2(z')|^2 dz'\\
&\leq C(n)\int_{B_1}(\frac{\varepsilon}{\rho^2}+|z'|^2)|\nabla \tilde w_2(z')|^2 dz'
\end{split}
\end{equation} 
Using \eqref{425}, choosing an appropriate $\beta$ in equation \eqref{424}, we have
\begin{equation}\label{426}
\begin{split}
\int_{B_1}\big(\frac{\varepsilon+\rho^2|z'|^2}{\rho^2}\big)|\nabla \tilde w_2(z')|^2 dz'\leq C \int_{B_1}\frac{|F(\rho z')|^2}{ \varepsilon+\rho^2|z'|^2}dz'+C\int_{B_1} |z'|^{-\frac{2(n-1)}{n+1}}|G(\rho z')|^2 dz'.
\end{split}
\end{equation}
Combining \eqref{426} and the fact that
\begin{equation}
\begin{split}
\int_{B_1}\frac{|F(\rho z')|^2}{ \varepsilon+\rho^2|z'|^2}dz'&\leq C\int_{B_1}(\varepsilon+\rho^2|z'|^2)^{\sigma}dz'\leq C\rho^{2\sigma},\\
\int_{B_1} |z'|^{-\frac{2(n-1)}{n+1}}|G(\rho z')|^2 dz'&\leq C\rho^{2\sigma}\int_{B_1} |z'|^{-\frac{2(n-1)}{n+1}}dz'\leq C \rho^{2\sigma},
\end{split}
\end{equation}
we can obtain
\begin{equation}
\int_{B_1\backslash B_{\frac{1}{16}}}|\nabla \tilde w_2(z')|^2dz'\leq C\rho^{2\sigma}.
\end{equation}
Using Poincar\'e inequality, we know
$$\int_{B_1\backslash B_{\frac{1}{16}}}|\tilde w_2(z')|^2dz'\leq C\int_{B_1\backslash B_{\frac{1}{16}}}|\nabla \tilde w_2(z')|^2dz'\leq C\rho^{2\sigma}.$$
Then by rescaling and making use of the local estimates at the boundary (see Theorem 8.25 in \cite{GT}), we can get
\begin{equation}\label{w2}
\begin{split}
\|w_2\|^2_{L^{\infty}(B_{\rho}\backslash B_{\frac{1}{8}\rho})}&=\|\tilde w_2\|^2_{L^{\infty}(B_{1}\backslash B_{\frac{1}{8} })}\leq C\left(\int_{B_1\backslash B_{\frac{1}{16}}}|\tilde w_2(z')|^2dz'+\rho^{2\sigma}\right)\leq C\rho^{2\sigma}.
\end{split}
\end{equation}
The proof is finished by \eqref{w1} and \eqref{w2}.

\end{proof}

\begin{lemma}\label{oscillationw}
For $n\geq3$, let $w_1\in H^1(B_{\rho})$ be a solution to 
\begin{equation}
\dv\Big[\big(\varepsilon I+A(x')\big)\nabla w_1(x')\Big]=0 \quad\text{in}\,\,B_{\rho}(0') \subset \bR^{n-1},
\end{equation}
where the $(n-1)\times(n-1)$ matrix $A(x')=(A^{ij}(x'))$ is measurable, symmetric and satisfies 
\begin{equation*}
\frac{1}{A}|x'|^2|\xi|^2\leq \xi^{T}A(x')\xi,\quad |A^{ij}(x')|\leq A|x'|^2 \quad\text{for}\,\, \forall \xi\in\bR^{n-1},\,\forall y'\in B_{\rho},
\end{equation*}
for some positive constant $A$. Then for $\rho>\sqrt{\varepsilon} $, we have
\begin{equation}
\underset{B_{\frac{\rho}{2}}}\osc\, w_1\leq \beta\underset{\partial B_{\rho}} \osc\, w_1,
\end{equation}
where $\beta\in(0,1)$ is a constant depending only on $A$ and $n$.
\end{lemma}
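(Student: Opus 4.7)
The plan is to derive the oscillation decay via the classical dichotomy argument combined with a weak Harnack inequality uniform in $\varepsilon$, after reducing to unit scale.

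First I would rescale: set $\tilde w(y) := w_1(\rho y)$ for $y\in B_1$, so that
\begin{equation*}
\dv\bigl[\bigl(\tilde\varepsilon I+\tilde A(y)\bigr)\nabla \tilde w\bigr]=0\quad\text{in }B_1,
\end{equation*}
with $\tilde\varepsilon=\varepsilon/\rho^2<1$ (since $\rho>\sqrt\varepsilon$) and $\tilde A(y)=A(\rho y)/\rho^2$ satisfying the same ellipticity bounds as $A$. So it suffices to prove the statement with $\rho=1$ and $\varepsilon<1$. Since $\varepsilon I+A(y)$ is everywhere positive definite (degeneracy is only at a single point), the weak maximum principle for divergence-form equations gives $\osc_{B_1}w_1=\osc_{\partial B_1}w_1=:\omega$; by affine normalization we may take $\inf_{\partial B_1}w_1=0$ and $\sup_{\partial B_1}w_1=1$.

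The core step is the following weak Harnack inequality, with constants depending only on $A$ and $n$: for any nonnegative solution $v$ in $B_1$,
\begin{equation*}
\Bigl(\fint_{B_{3/4}} v^p\,dy\Bigr)^{1/p}\leq C\inf_{B_{1/2}} v,\qquad\text{for some }p>0.
\end{equation*}
Granted this, the standard dichotomy concludes: one of the functions $v:=w_1$ or $v:=1-w_1$ (both nonnegative in $B_1$ by the maximum principle) exceeds $1/2$ on a set of measure at least $|B_{3/4}|/2$, so the left side above is bounded below by a universal positive constant, yielding $\inf_{B_{1/2}}v\geq c_0>0$. Equivalently, $\osc_{B_{1/2}}w_1\leq 1-c_0=:\beta<1$, which is the claim.

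To prove the weak Harnack uniformly in $\tilde\varepsilon\in(0,1)$, I would run Moser's iteration on $v^q\xi^2$ and $\log v\cdot\xi^2$ test functions, producing Caccioppoli-type inequalities against the weight $\tilde\varepsilon+|y|^2$, then close the iteration with the Caffarelli--Kohn--Nirenberg embedding already invoked in Lemma~\ref{Linftyboundary},
\begin{equation*}
\|u\|_{L^{2(n+1)/(n-1)}(B_1,|y|^2dy)}\leq C(n)\|\nabla u\|_{L^2(B_1,|y|^2dy)}.
\end{equation*}
The pointwise inequality $\tilde\varepsilon+|y|^2\geq |y|^2$ lets one bound the right-hand side of every Caccioppoli estimate by the energy against the natural weight $\tilde\varepsilon+|y|^2$, so the iteration constants are independent of $\tilde\varepsilon$. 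The logarithmic step that bridges positive and negative exponents follows from John--Nirenberg on the weighted space of homogeneous type $(B_1,(\tilde\varepsilon+|y|^2)\,dy)$.

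The main obstacle I anticipate is ensuring true $\varepsilon$-uniformity of the Harnack constant on balls that meet the singular set $\{y=0\}$, since for $n=3$ the weight $|y|^2$ just fails to lie in $A_2(\bR^2)$; the positive $\tilde\varepsilon$ must be used to compensate precisely on the innermost scale $|y|\lesssim\sqrt{\tilde\varepsilon}$, where one can pass to the rescaled variable $z=y/\sqrt{\tilde\varepsilon}$ to see the operator as a uniformly elliptic perturbation of $\Delta$. Away from the origin the equation is uniformly elliptic at each scale, and a finite covering/Harnack chain argument patches the two regimes together.
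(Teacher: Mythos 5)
Your plan is to establish a weak Harnack inequality for the degenerate operator on the full rescaled ball $B_1$, uniformly in $\tilde\varepsilon = \varepsilon/\rho^2 \in (0,1)$, and then run the classical dichotomy. Conceptually this is a valid route, and the dichotomy step is fine (with the weak maximum principle giving $\osc_{B_1}\tilde w_1 = \osc_{\partial B_1}\tilde w_1$ and nonnegativity of $\tilde w_1$ and $1-\tilde w_1$ in $B_1$). But the load-bearing step --- the $\tilde\varepsilon$-uniform weak Harnack inequality $\bigl(\fint_{B_{3/4}} v^p\bigr)^{1/p}\leq C\inf_{B_{1/2}}v$ --- is exactly the hard part, and you leave it as a sketch. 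The issues you yourself flag are real: the weight $|y|^2$ on $\bR^{2}$ (the case $n=3$) is borderline for $A_2$, so the Fabes--Kenig--Serapioni machinery does not apply directly, and proving uniformity of the Moser/John--Nirenberg constants through the degenerate scale $|y|\lesssim\sqrt{\tilde\varepsilon}$ is a nontrivial scaling-and-chaining argument in its own right. As written, the proposal has a genuine gap at that step.

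The paper avoids this difficulty entirely. After the same rescaling to $B_1$, it applies the \emph{classical} Harnack inequality only in the annulus $B_{1/2}\setminus B_{1/4}$, where $\rho>\sqrt\varepsilon$ and $|z'|\geq 1/4$ force the rescaled coefficient $\bigl(\varepsilon I + A(\rho z')\bigr)/\rho^2$ to be uniformly elliptic with constants depending only on $A$ (its smallest eigenvalue is at least $(1/4)^2/A$ there, and its largest is $\leq 1+A$). Applying Harnack separately to $\tilde w_1-\inf_{B_\rho}w_1$ and $\sup_{B_\rho}w_1-\tilde w_1$ on the annulus and adding gives $\osc_{B_{\rho/2}\setminus B_{\rho/4}}w_1\leq \frac{C-1}{C+1}\osc_{B_\rho}w_1$. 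Finally, the weak maximum principle (which holds for the divergence-form operator even though its ellipticity modulus degenerates at the origin as $\varepsilon\to 0$, since $\varepsilon I + A>0$ pointwise) gives $\osc_{B_{\rho/2}}w_1=\osc_{\partial B_{\rho/2}}w_1\leq\osc_{B_{\rho/2}\setminus B_{\rho/4}}w_1$ and $\osc_{B_\rho}w_1=\osc_{\partial B_\rho}w_1$, closing the argument with $\beta=\frac{C-1}{C+1}$. In short, the paper never needs a Harnack inequality that crosses the degenerate center: it reads off the oscillation on the annulus and transports it to the balls via the maximum principle. You should replace the uniform weak Harnack step by this much cheaper annulus-plus-maximum-principle argument, or else actually carry out the $\tilde\varepsilon$-uniform Moser iteration and chaining in detail; in its current form the central lemma of your proof is not established.
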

\begin{proof}
We perform a change of variables by setting $z'=\frac{x'}{\rho}$ and let $\tilde w_1(z')=w_1(x').$
Then $\tilde w_1$ satisfies 
\begin{equation}
\dv\left(\frac{\varepsilon I+A(\rho z')}{\rho^2}\nabla \tilde w_1(z')\right)=0\quad z'\in B_1\backslash B_{\frac{1}{8}}.
\end{equation}
Making use of Harnack inequality (see Corollary 8.21 in \cite{GT}), we have
\begin{equation}\label{equ433}
\begin{aligned}
\underset{z'\in B_{\frac{1}{2}}\backslash B_{\frac{1}{4}}}\sup(\tilde w_1(z')-\underset{B_{\rho}}\inf\, w_1)&\leq C\underset{z'\in B_{\frac{1}{2}}\backslash B_{\frac{1}{4}}}\inf(\tilde w_1(z')-\underset{B_{\rho}}\inf\, w_1),\\
\underset{z'\in B_{\frac{1}{2}}\backslash B_{\frac{1}{4}}}\sup(\underset{B_{\rho}}\sup\, w_1-\tilde w_1(z'))&\leq C\underset{z'\in B_{\frac{1}{2}}\backslash B_{\frac{1}{4}}}\inf(\underset{B_{\rho}}\sup\, w_1-\tilde w_1(z')).
\end{aligned}
\end{equation}
By \eqref{equ433} and rescaling, we can get
\begin{equation}\label{equ434}
\begin{aligned}
\underset{  B_{\frac{\rho}{2}}\backslash B_{\frac{\rho}{4}}}\sup\, w_1 -\underset{B_{\rho}}\inf\, w_1 &\leq C (\underset{  B_{\frac{\rho}{2}}\backslash B_{\frac{\rho}{4}}}\inf\, w_1 -\underset{B_{\rho}}\inf\, w_1 ) ,\\
\underset{B_{\rho}}\sup\, w_1-\underset{  B_{\frac{\rho}{2}}\backslash B_{\frac{\rho}{4}}}\inf w_1 &\leq C ( \underset{B_{\rho}}\sup\, w_1-\underset{  B_{\frac{\rho}{2}}\backslash B_{\frac{\rho}{4}}}\sup w_1  ).
\end{aligned}
\end{equation}
Adding up the above two inequlities in \eqref{equ434}, we know that,
\begin{equation}\label{435}
\underset{  B_{\frac{\rho}{2}}\backslash B_{\frac{\rho}{4}}}\osc\, w_1  \leq \frac{C-1}{C+1}\underset{B_{\rho}}\osc\, w_1. 
\end{equation}
By the weak maximum principle, we have
\begin{equation}\label{436}
\begin{split}
\underset{  B_{\frac{\rho}{2}}\backslash B_{\frac{\rho}{4}}}\osc\, w_1 \leq\underset{B_{\frac{\rho}{2}}}\osc\, w_1&=\underset{\partial B_{\frac{\rho}{2}}}\osc\, w_1\leq \underset{  B_{\frac{\rho}{2}}\backslash B_{\frac{\rho}{4}}}\osc\, w_1,\\
\underset{B_{\rho}}\osc\, w_1&=\underset{\partial B_{\rho}}\osc\, w_1.
\end{split}
\end{equation}
By \eqref{435} and \eqref{436}, we finish the proof.
\end{proof}
Now we are ready to prove Proposition \ref{zero} and Proposition \ref{nonzero}.
\begin{proof}[Proof of Proposition \ref{zero}]
Without loss of generality, we assume that $$\|F\|_{\varepsilon,\frac{\sigma+1}{2},B_{\rho}'}+\|G\|_{\varepsilon,\frac{\sigma }{2},B_{\rho}'}\leq 1.$$ 
It is well know that $w$ is locally H\"older continuous in $B_{\rho}'.$  For $\sqrt\varepsilon<|x'|<\frac{\rho}{4}$, we consider $w$ in $B_{2|x'|}$,
\begin{equation}
\dv\Big[\big(\varepsilon I+A(y')\big)\nabla w(y')\Big]=\dv F+ G \quad\text{in}\,\,B_{2|x'|} \subset \bR^{n-1}.
\end{equation}
Using Lemma \ref{Linftyboundary}, we can know
\begin{equation}
|w(x')|\leq \|w\|_{L^{\infty}(\partial B_{2|x'|})}+C(2|x'|)^{\sigma},
\end{equation}
which implies
\begin{equation}\label{Linftyiteration}
\|w\|_{L^{\infty}(\partial B_{|x'|})}\leq \|w\|_{L^{\infty}(\partial B_{2|x'|})}+C(2|x'|)^{\sigma}.
\end{equation}
Iterating \eqref{Linftyiteration} $k$ times, where $k$ satisfies $\frac{\rho}{4}\leq 2^{k-1}|x'|<2^k|x'|<\rho$, we have
\begin{equation}\label{414}
\begin{split}
\|w\|_{L^{\infty}(\partial B_{|x'|})}&\leq \|w\|_{L^{\infty}(\partial B_{2^k|x'|})}+C\sum_{i=1}^{k}2^{\sigma i}|x'|^{\sigma}\leq \|w\|_{L^{\infty}(B_{\rho}\backslash B_{\frac{\rho}{4}})}+C\frac{2^{(k+1)\sigma}|x'|^{\sigma}}{2^{\sigma}-1}.
\end{split}
\end{equation}
Noting that $2^{k\sigma}|x'|^{\sigma}<\rho^{\sigma}$, by \eqref{414} and Lemma \ref{Linftyboundary} we have proved that 
\begin{equation}\label{Linftyepsilon} 
\|w\|_{L^{\infty}(B_{\rho}\backslash B_{\sqrt\epsilon})}\leq \|w\|_{L^{\infty}(\partial B_{\rho})}+\frac{C \rho^{\sigma}}{2^{\sigma}-1}.
\end{equation}
For $|x'|\leq \sqrt\varepsilon$, considering $w$ in $B_{\sqrt\varepsilon}$, by a scaling argument, Theorem 8.16 in \cite{GT} and \eqref{Linftyepsilon}, we can obtain
\begin{equation}\label{Linftyepsilon1} 
\|w\|_{L^{\infty}( B_{\sqrt\epsilon})}\leq \|w\|_{L^{\infty}(\partial B_{\rho})}+\frac{C \rho^{\sigma}}{2^{\sigma}-1}.
\end{equation}
The proof is finished by \eqref{Linftyepsilon}, \eqref{Linftyepsilon1} and Lemma \ref{Linftyboundary}.
\end{proof}
\begin{proof}[Proof of Proposition \ref{nonzero} ]
In the proof, for $\sigma\geq 0$, we assume that$$\|w\|_{L^{\infty}(\partial B_{\rho}')}+\|F\|_{\varepsilon, \frac{1+\sigma}{2},B_{\rho}'}+\|G\|_{\varepsilon, \frac{\sigma}{2},B_{\rho}'}\leq1.$$
For $\sqrt{\varepsilon}<|x'|<\frac{\rho}{4}$, we decompose $w=w_1+w_2$ in $B_{2|x'|}(0'),$ where $w_2\in H_{0}^1(B_{2|x'|}(0'))$ satisfies 
\begin{equation}
\dv\Big[\big(\varepsilon I+A(y')\big)\nabla w_2(y')\Big]=\dv F+ G \quad\text{in}\,\,B_{2|x'|} \subset \bR^{n-1}.
\end{equation}
Then $w_1$ satisfies
\begin{equation}
\begin{cases}
\dv\Big[\big(\varepsilon I+A(y')\big)\nabla w_1(y')\Big]=0 &\quad\text{in}\,\,B_{2|x'|} \subset \bR^{n-1},\\
w_1(y')=w(y') &\quad\text{on}\,\,\partial B_{2|x'|} .
\end{cases}
\end{equation}
If $\sigma=0$, by Lemma \ref{Linftyboundary} and Lemma \ref{oscillationw}, for some $\beta\in(0,1)$, we have
\begin{equation}\label{iterationosc}
\begin{split}
\underset{B_{|x'|}\backslash B_{\frac{|x'|}{2}}}\osc\,w&\leq \underset{B_{|x'|}\backslash B_{\frac{|x'|}{2}}}\osc\,w_1+ \underset{B_{|x'|}\backslash B_{\frac{|x'|}{2}}}\osc\,w_2\\
&\leq \underset{B_{|x'|}}\osc\,w_1+\underset{B_{2|x'|}\backslash B_{\frac{|x'|}{2}}}\osc\,w_2\\
&\leq \beta \underset{\partial B_{2|x'|}}\osc\,w_1 + C \\
&\leq \beta\underset{B_{2|x'|}\backslash B_{|x'|}}\osc\,w+ C .
\end{split}
\end{equation}
Iterating \eqref{iterationosc} for $k$ times, where $k$ satisfies $\frac{\rho}{4}\leq 2^{k-1}|x'|<2^{k}|x'|<\rho,$ and using Lemma \ref{Linftyboundary}, we have
\begin{equation}\label{Linftyepsilon2}
\begin{split}
 \underset{B_{|x'|}\backslash B_{\frac{|x'|}{2}}}\osc\,w&\leq\beta^{k}\underset{B_{2^k|x'|}\backslash B_{2^{k-1}|x'|}}\osc\,w+C\sum_{i=0}^{k-1}\beta^{i}\leq \underset{B_{\rho}\backslash B_{\frac{\rho}{4}}}\osc\,w+C\leq C.
\end{split}
\end{equation}
For $|x'|\leq \sqrt\varepsilon$, considering $w$ in $B_{\sqrt\varepsilon}$, by a scaling argument, Theorem 8.16 in \cite{GT} and \eqref{Linftyepsilon2}, we can obtain
\begin{equation}\label{Linftyepsilon3} 
\underset{B_{\sqrt\varepsilon}(0')}\osc\,w\leq C.
\end{equation}
The proof of Proposition \ref{nonzero} (i) is finished by \eqref{Linftyepsilon2} and \eqref{Linftyepsilon3}.

If $\sigma>0$, By Lemma \ref{Linftyboundary} and Lemma \ref{oscillationw}, for some $\beta\in(0,1)$, we have
\begin{equation}\label{iterationosc1}
\begin{split}
\underset{B_{|x'|}\backslash B_{\frac{|x'|}{2}}}\osc\,w&\leq \underset{B_{|x'|}\backslash B_{\frac{|x'|}{2}}}\osc\,w_1+ \underset{B_{|x'|}\backslash B_{\frac{|x'|}{2}}}\osc\,w_2\\
&\leq \underset{B_{|x'|}}\osc\,w_1+\underset{B_{2|x'|}\backslash B_{\frac{|x'|}{2}}}\osc\,w_2\\
&\leq \beta \underset{\partial B_{2|x'|}}\osc\,w_1 + C|x'|^\sigma \\
&\leq \beta\underset{B_{2|x'|}\backslash B_{|x'|}}\osc\,w+ C|x'|^{\sigma} .
\end{split}
\end{equation}
We can make $\sigma>0$ smaller such that $\beta2^\sigma\neq1$. Iterating \eqref{iterationosc1} for $k$ times, where k satisfies $\frac{\rho}{4}\leq 2^{k-1}|x'|<2^{k}|x'|<\rho,$ taking $\beta=2^{-\tilde\beta}$ for $\tilde \beta>0$ and using Lemma \ref{Linftyboundary}, we have
\begin{equation}\label{Linftyepsilon4}
\begin{split}
 \underset{B_{|x'|}\backslash B_{\frac{|x'|}{2}}}\osc\,w&\leq\beta^{k}\underset{B_{2^k|x'|}\backslash B_{2^{k-1}|x'|}}\osc\,w+C\sum_{i=0}^{k-1}\beta^{i}2^{i\sigma}|x'|^{\sigma}\\
&\leq \beta^{k}\underset{B_{\rho}\backslash B_{\frac{\rho}{4}}}\osc\,w+C\frac{1-2^{k\sigma}\beta^{k}}{1-2^{\sigma}\beta}|x'|^{\sigma}\\
&\leq C\left(\frac{|x'|}{\rho}\right)^{\tilde\beta}+C|x'|^{\sigma}.
\end{split}
\end{equation}
Taking $\tau=\min\{\tilde\beta,\sigma\}$, we have proved the case for $\sqrt\varepsilon<|x'|<\frac{\rho}{4}$. For $|x'|\leq \sqrt\varepsilon$, considering $w$ in $B_{\sqrt\varepsilon}$, by a scaling argument, Theorem 8.16 in \cite{GT} and \eqref{Linftyepsilon4}, we can obtain
\begin{equation}\label{Linftyepsilon5} 
\underset{B_{\sqrt\varepsilon}(0')}\osc\,w\leq C\left(\frac{\sqrt\varepsilon}{\rho}\right)^{\tau}.
\end{equation}
The proof of Proposition \ref{nonzero} (ii) is finished by \eqref{Linftyepsilon4} and \eqref{Linftyepsilon5}  .
\end{proof}

\subsection{Proof of Proposition \ref{ubound}}
\begin{proof}[Proof of Proposition \ref{ubound}]
Without loss of generality, we can assume that $\phi(0')\geq 0$ and $\fint_{\Omega\backslash\Omega_{\tilde{R}/2}}u=0$.  
Denote $$\mathcal{U}_{\tilde{R}}:=\Big\|u-\fint_{\Omega\backslash\Omega_{\tilde{R}/2}}u\Big\|_{L^{\infty}({\Omega}\backslash\Omega_{3\tilde{R}/4})}=\left\|u \right\|_{L^{\infty}({\Omega}\backslash\Omega_{3\tilde{R}/4})} ,$$
 and
$$\mathcal{O}_{\tilde{R}}:=\sup_{x\in\Omega_{\tilde{R}}}\underset{\Omega_{\frac{1}{4}\sqrt{\eta(x')}}(x)}\osc\,u,$$ 
where the constant $\tilde{R}$ will be determined later, which is at least smaller than $\bar{R}$. In the following, we use $C_{0}$ to denote the constant dependent of $\bar{R}$ in the Proposition \ref{prop_mainthm}, but independent of $\varepsilon$ and $\tilde{R}$. $C$ is a constant dependent of $\tilde R$, but independent of $\varepsilon$.

By Proposition \ref{prop_mainthm}, we have 
\begin{equation}\label{estgradientu}
|\nabla_{x'}u(x)|\leq \left(C_0\mathcal{O}_{\tilde{R}} +C\|\phi\|_{C^{\alpha}(\partial D)}\right)\eta(x')^{-1/2},\quad\mbox{for}~|x'|\leq \tilde{R},
\end{equation}
where $C_{0}$ is the constant in Proposition \ref{prop_mainthm}. Recall $\partial_{\nu}u=0$ on $\partial{D}_{1}$ and $\partial_{\nu}u=\phi$ on $\partial{D}$. Then,
on $\Gamma_{\tilde R}^{\pm}$,  
\begin{equation}\label{estgradientun1}
|\partial_nu(x)|\leq C_0|x'||\nabla_{x'}u(x')|+|\phi(x)|\leq C_0 \mathcal{O}_{\tilde{R}} +C\|\phi\|_{C^{\alpha}(\partial D)};
\end{equation}
On $\partial \Omega\backslash\Gamma_{\tilde R}^{\pm}$, by using the $W^{2,p}$ estimate and the Sobolev embedding theorem, 
\begin{equation}\label{estgradientun2}
|\partial_n u(x)|\leq \|\nabla u\|_{L^{\infty}(\Omega\backslash\Omega_{\tilde R})}\leq C \mathcal{U}_{\tilde{R}}+C\|\phi\|_{C^{\alpha}(\partial D)}.
\end{equation}
Since $\partial_nu$ is harmonic in $\Omega$, by applying the maximum principle, 
\begin{equation}\label{estgradientu2}
|\partial_n u(x)|\leq C_0\mathcal{O}_{\tilde{R}}+C\mathcal{U}_{\tilde{R}}+C\|\phi\|_{C^{\alpha}(\partial D)},\quad\mbox{in}~\Omega.
\end{equation}

Recalling \eqref{barv2}, we decompose $\bar u=u_1+u_2$, where $u_1$ verifies 
\begin{equation}
\begin{cases}
\dv(\delta(y')\nabla u_1)=-\dv \tilde{F}+\psi(y')-\psi(0')\quad &\mbox{in}\, B'_{\tilde{R}},\\
u_1=\bar u\quad&\mbox{on}\, \partial B'_{\tilde{R}},
\end{cases}
\end{equation}
while $u_2$ verifies 
\begin{equation}
\begin{cases}
\dv(\delta(y')\nabla u_2)=\psi(0')\quad &\mbox{in}\, B'_{\tilde{R}},\\
u_2=0\quad&\mbox{on}\, \partial B'_{\tilde{R}}.
\end{cases}
\end{equation}
By \eqref{DIVF}, \eqref{estgradientu2} and $\psi(y')=\phi(y')\sqrt{1+|\nabla_{y'}g(y')|^2}$, $\phi\in C^{\alpha}(\partial D)$, we have 
\begin{equation}\label{estFG}
\begin{split}
& \|\tilde F  \|_{\varepsilon, \frac{3}{2},B'_{\tilde{R}}}\leq \, C_0\mathcal{O}_{\tilde{R}}+C\mathcal{U}_{\tilde{R}}+C\|\phi\|_{C^{\alpha}(\partial D)},\\
&\| \psi -\psi(0')\|_{\varepsilon,\frac{\alpha}{2},B'_{\tilde{R}}}\leq \, C\|\phi\|_{C^{\alpha}(\partial D)} .
\end{split}
\end{equation}
By applying Proposition \ref{zero}, we have
\begin{equation}\label{normv1}
\begin{aligned}
\|u_1\|_{L^{\infty}(B'_{\tilde{R}}(0'))}&\leq\|u\|_{L^{\infty}(\partial B'_{\tilde{R}}(0'))}+C_0\tilde R^2 \|\tilde F  \|_{\varepsilon, \frac{3}{2},B'_{\tilde{R}}}+C\|\psi -\psi(0')\|_{\varepsilon,  \frac{\alpha}{2},B'_{\tilde{R}}}\\
&\leq C_0\tilde R^2\mathcal{O}_{\tilde{R}}+C\mathcal{U}_{\tilde{R}}+C\|\phi\|_{C^{\alpha}(\partial D)}.
\end{aligned}
\end{equation}
For $u_2$, making use of Prposition \ref{nonzero}, for $x'\in B_{\frac{\tilde R}{4}}(0')$, we have
\begin{equation}\label{oscu2}
\underset{{B}'_{\frac{1}{4}\sqrt{\eta(x')}}(x')}\osc~ u_2\leq C|\phi(0')|,
\end{equation}
and by Lemma \ref{Linftyboundary}, for $x'\in B_{\tilde R}\backslash B_{\frac{\tilde R}{8}} $, we have
\begin{equation}\label{oscu22}
\underset{B_{\tilde R}\backslash B_{\frac{\tilde R}{8}}}\osc~ u_2\leq C|\phi(0')|.
\end{equation}
Morever, by $M\geq\Delta \delta(y')\geq \frac{1}{M}$, setting $\tilde \delta_1(y')=M\psi(0')\ln\delta(y')$ and $\tilde \delta_2(y')=\frac{1}{M}\psi(0')\ln\delta(y')$, we can get 
\begin{equation}
\dv\left(\delta(y')\nabla\tilde \delta_2(y')\right)\leq \dv(\delta(y')\nabla u_2(y'))\leq \dv\left(\delta(y')\nabla\tilde \delta_1(y')\right).
\end{equation}
Thus, by maximun principle, we have
\begin{equation}\label{blowupu2}
\begin{split}
&u_2(y')\geq M\psi(0')\ln\delta(y')-C\psi(0')|\ln \tilde R|,\\
&  u_2(y')\leq\frac{1}{M}\psi(0')\ln\delta(y')+C\psi(0')|\ln \tilde R|.
\end{split}
\end{equation}
 By \eqref{normv1} and \eqref{oscu2}, for $x\in\Omega_{\frac{\tilde{R}}{4}}$, we have
\begin{align}\label{equ222}
\underset{{B}'_{\frac{1}{4}\sqrt{\eta(x')}}(x')}\osc~ \bar u 
\leq&\, 2\|u_1\|_{L^{\infty}({B}'_{\frac{1}{4}\sqrt{\eta(x')}}(x'))}+\underset{{B}'_{\frac{1}{4}\sqrt{\eta(x')}}(x')}\osc~ u_2  \nonumber\\
\leq&\,  C_0\tilde{R}^{2}\mathcal{O}_{\tilde{R}}+C\mathcal{U}_{\tilde{R}}+C\|\phi\|_{C^{\alpha}(\partial D)}.
\end{align}
By \eqref{normv1} and \eqref{oscu22}, for $x\in\Omega_{\tilde R}\backslash \Omega_{\frac{\tilde R}{4}}$, we can get
\begin{equation}\label{equ2222}
\begin{split}
\underset{{B}'_{\frac{1}{4}\sqrt{\eta(x')}}(x')}\osc~ \bar u&\leq \underset{B_{2\tilde R}\backslash B_{ \frac{\tilde R}{8}}}\osc~ \bar u\\
&\leq \underset{B_{\tilde R}\backslash B_{ \frac{\tilde R}{8}}}\osc~ \bar u+\underset{B_{2\tilde R}\backslash B_{\tilde R}}\osc~ \bar u\\
&\leq 2\|u_1\|_{L^{\infty}(B_{\tilde R}\backslash B_{ \frac{\tilde R}{8}})} +\underset{B_{\tilde R}\backslash B_{ \frac{\tilde R}{8}}}\osc~ u_2+2\mathcal{U}_{\tilde R}\\
&\leq  C_0\tilde{R}^{2}\mathcal{O}_{\tilde{R}}+C\mathcal{U}_{\tilde{R}}+C\|\phi\|_{C^{\alpha}(\partial D)}.
\end{split}
\end{equation}

On the other hand, 
$$\underset{\Omega_{\frac{1}{4}\sqrt{\eta(x')}}(x)}\osc~(u-\bar u)\leq2\|u-\bar u\|_{L^{\infty}(\Omega_{\frac{1}{4}\sqrt{\eta(x')}}(x))}\leq\,C_0\eta(x')\|\partial_n u\|_{L^{\infty}(\Omega_{\frac{1}{4}\sqrt{\eta(x')}}(x'))}.$$
Thus, by virtue of \eqref{estgradientu2},\eqref{equ222} and \eqref{equ2222}, for $x\in\Omega_{\tilde{R}}$,
\begin{align*}
\underset{\Omega_{\frac{1}{4}\sqrt{\eta(x')}}(x)}\osc\,u\leq&\,\, \underset{\Omega_{\frac{1}{4}\sqrt{\eta(x')}}(x)}\osc~(u-\bar u)+\underset{B'_{\frac{1}{4}\sqrt{\eta(x')}}(x')}\osc~ \bar u \\
\leq&\, C_0\tilde{R}^2\|\partial_n u\|_{L^{\infty}(\Omega_{\tilde{R}})}+ C_0\tilde{R}^{2}\mathcal{O}_{\tilde{R}}+C\mathcal{U}_{\tilde{R}}+C\|\phi\|_{C^{\alpha}(\partial D)}\\
\leq&\, C_0\tilde{R}^2(C_0\mathcal{O}_{\tilde{R}}+C\mathcal{U}_{\tilde{R}}+C\|\phi\|_{C^{\alpha}(\partial D)})+C_0\tilde{R}^{2}\mathcal{O}_{\tilde{R}}+C\mathcal{U}_{\tilde{R}}+C\|\phi\|_{C^{\alpha}(\partial D)}\\
\leq&\, C_0\tilde{R}^{2}\mathcal{O}_{\tilde{R}}+C\mathcal{U}_{\tilde{R}}+C\|\phi\|_{C^{\alpha}(\partial D)}. 
\end{align*}
Recalling the definition of $\mathcal{O}_{\tilde{R}}$, we have
\begin{equation*}
\mathcal{O}_{\tilde{R}}\leq C_0\tilde{R}^{2}\mathcal{O}_{\tilde{R}}+C\mathcal{U}_{\tilde{R}}+C\|\phi\|_{C^{\alpha}(\partial D)}.
\end{equation*}
Noting that $C_0$ is a constant independent of $\tilde R$ and $\varepsilon$, we can choose a sufficiently small $\tilde{R}$, such that $C_0\tilde{R}^{2}=\frac{1}{2}$, then we have
\begin{equation}\label{UROR}
\mathcal{O}_{\tilde{R}}\leq C\mathcal{U}_{\tilde{R}}+C\|\phi\|_{C^{\alpha}(\partial D)}.
\end{equation}
Therefore, by means of \eqref{normv1},
\begin{equation}\label{estu222}
\|u_1\|_{L^{\infty}(B'_{\tilde{R}}(0'))}\leq C\mathcal{U}_{\tilde{R}}+C\|\phi\|_{C^{\alpha}(\partial D)}.
\end{equation}
Now we fix this $\tilde{R}$.

By using \eqref{blowupu2}, \eqref{estu222}, 
 \begin{align} \label{nablauL2255} 
\int_{\Gamma_{\tilde{R}}^{-}}|\bar u|^2
\leq&\, C\int_{B'_{\tilde{R}}(0')}|\bar u|^2 \nonumber\\
\leq&\, C\int_{B'_{\tilde{R}}(0')}|u_1|^2+C\int_{B'_{\tilde{R}}(0')}|u_2|^2 \nonumber\\
\leq&\, \Big(C\mathcal{U}_{\tilde{R}}+C\|\phi\|_{C^{\alpha}(\partial D)}\Big)^{2}\tilde{R}^{n-1}+ C|\phi(0')|^2\int_{B'_{\tilde{R}}(0')}|\ln(\epsilon+|x'|^2)|^2 \nonumber\\
\leq&\, C \mathcal{U}_{\tilde{R}}^2+C\|\phi\|_{C^{\alpha}(\partial D)}^2,
\end{align}
while, by \eqref{UROR},
 \begin{align} \label{nablauL221} 
 \int_{\Gamma_{\tilde{R}}^{-}}|u-\bar u|^2
\leq\, C \mathcal{O}_{\tilde{R}}^2
\leq&\, C \mathcal{U}_{\tilde{R}}^2+C\|\phi\|_{C^{\alpha}(\partial D)}^2,
\end{align}
then we have
 \begin{align} \label{nablauL22} 
 \int_{\Gamma_{\tilde{R}}^{-}}u^2
\leq \,C\int_{\Gamma_{\tilde{R}}^{-}}|u-\bar u|^2+C \int_{\Gamma_{\tilde{R}}^{-}}|\bar u|^2 \leq\, C \mathcal{U}_{\tilde{R}}^2+C\|\phi\|_{C^{\alpha}(\partial D)}^2.
\end{align}
Recalling the assumption $\fint_{\Omega\backslash\Omega_{\tilde{R}/2}}u=0$, and using the trace theorem and the Poincar\'{e} inequality, we have
 \begin{align} \label{nablauL2}
 \int_{\partial{D}\setminus\Gamma_{\tilde{R}/2}^{-}}u^2\leq\,C\int_{\Omega\backslash\Omega_{\tilde{R}/2}}(u^{2}+|\nabla u|^2)\leq\,C\int_{\Omega\backslash\Omega_{\tilde{R}/2}} |\nabla u|^2.
\end{align}

By a similar bootstrap argument as in the proof of Lemma \ref{lem_estv}, we have
\begin{align} \label{nablauLinfty}
\mathcal{U}_{\tilde{R}}\leq&\,C\Big\|u-\fint_{\Omega\setminus\Omega_{\tilde{R}/2}}u\Big\|_{L^{2}(\Omega\setminus\Omega_{\tilde{R}/2})}+C\|\phi\|_{C^{\alpha}(\partial D)}\nonumber\\
\leq&\,C\|\nabla u\|_{L^{2}(\Omega\setminus\Omega_{\tilde{R}/2})}+C\|\phi\|_{C^{\alpha}(\partial D)}.
\end{align} 
Substituting this in \eqref{nablauL22} yields
$$\int_{\Gamma_{\tilde{R}}^{-}}u^2\leq\, C\|\nabla u\|^{2}_{L^{2}(\Omega\setminus\Omega_{\tilde{R}/2})}+C\|\phi\|^{2}_{C^{\alpha}(\partial D)}.$$
Hence, together with \eqref{nablauL2},
$$\int_{\partial D}u^2\leq\int_{\Gamma_{\tilde{R}}^{-}}u^2+\int_{\partial{D}\setminus\Gamma_{\tilde{R}/2}^{-}}u^2\leq\, C\|\nabla u\|^{2}_{L^{2}(\Omega\setminus\Omega_{\tilde{R}/2})}+C\|\phi\|^{2}_{C^{\alpha}(\partial D)}.$$
By the Cauchy inequality,  
\begin{align} \label{nablauL20}
\int_{\Omega}|\nabla u|^2 =&\,\int_{\partial D}u\phi \leq \mu \int_{\partial D}u^2+\frac{C}{\mu}\int_{\partial D}|\phi|^2\nonumber\\
\leq &\,C\mu\int_{\Omega}|\nabla u|^2+\Big(\frac{C}{\mu}+C\mu\Big)\|\phi\|^{2}_{C^{\alpha}(\partial D)},
\end{align}
Then, choosing a sufficiently small $\mu$, such that $C\mu=\frac{1}{2}$, yields
$$\int_{\Omega}|\nabla u|^2\leq\,C\|\phi\|^{2}_{C^{\alpha}(\partial D)}.$$
Hence, by virtue of \eqref{nablauLinfty},
\begin{equation}\label{estu2223}
\mathcal{U}_{\tilde{R}}\leq\,C\|\phi\|_{C^{\alpha}(\partial D)},
\end{equation}
and by \eqref{UROR}, we have
\begin{equation}\label{estu2224}
\mathcal{O}_{\tilde{R}}\leq\,C\|\phi\|_{C^{\alpha}(\partial D)}.
\end{equation}
This implies that \eqref{uinfty} and \eqref{oscu} hold.
\end{proof}

\section{Proof of Theorem \ref{cor1}, Corollary \ref{cor11} and Theorem \ref{lowerbound}}

In this section, we prove  Theorem \ref{cor1}, Corollary \ref{cor11} and Theorem \ref{lowerbound}. 

\subsection{Proof of Theorem \ref{cor1}}

\begin{proof}[Proof of Theorem \ref{cor1}]
By using Proposition \ref{prop_mainthm},  Proposition \ref{ubound}, it is obvious that \eqref{upperbd} holds. 

For $n<p<\frac{2}{1-\alpha}$, using the boundary estimates (Theorem 6.27 in \cite{L}) in the region $\Omega\setminus\Omega_{\frac{3}{4}\tilde{R}}$, and Proposition \ref{ubound} again, we have
\begin{align}\label{nablau_infty}
\|\nabla u\|_{L^{\infty}(\Omega\setminus\Omega_{\tilde{R}})}\leq&\,C\Big\|u-\fint_{\Omega\setminus\Omega_{\frac{1}{2}\tilde{R}}}u\Big\|_{W^{2,p}(\Omega\setminus\Omega_{\frac{7}{8}\tilde{R}})}\nonumber\\
\leq&\,\,C\Big\|u-\fint_{\Omega\setminus\Omega_{\frac{1}{2}\tilde{R}}}u\Big\|_{L^{p}(\Omega\setminus\Omega_{\frac{3}{4}\tilde{R}})}+C\|\phi\|_{1-\frac{1}{p},p;\partial{\Omega}\setminus\Gamma_{\frac{3}{4}\tilde{R}}}\nonumber\\
\leq&\,C\|\phi\|_{C^{\alpha}(\partial D)}.
\end{align}
Recall $\partial_{\nu}u=0$ on $\partial{D}_{1}$ and $\partial_{\nu}u=\phi$ on $\partial{D}$. On $\Gamma^\pm_{\tilde{R}}$, by virtue of \eqref{upperbd},
$$|\partial_{n}u(x)|\leq\,C|x'||\partial_{x'}u(x)|+C\|\phi\|_{C^{\alpha}(\partial{\Omega})}\leq\,C\|\phi\|_{C^{\alpha}(\partial{\Omega})}.$$
On $\partial\Omega\setminus\Gamma^\pm_{\tilde{R}}$, by \eqref{nablau_infty}, 
$$|\partial_{n}u(x)|\leq\|\nabla u\|_{L^{\infty}(\Omega\setminus\Omega_{\tilde{R}})}\leq\,C\|\phi\|_{C^{\alpha}(\partial{\Omega})}.$$
Since $\partial_{n}u$ is harmonic, applying the maximum principle, the proof is completed.
\end{proof}

\subsection{Proof of Corollary \ref{cor11}}

\begin{proof}[Proof of Corollary \ref{cor11} (i)]
Without loss of generality, we assume that $\fint_{\Omega\backslash\Omega_{\tilde R/2}} u=0$. By virtue of \eqref{normv1}, \eqref{estu2223} and \eqref{estu2224}, we have
\begin{equation}\label{normv112}
\|u_1\|_{L^{\infty}(B'_{\tilde{R}}(0'))}\leq\,C\|\phi\|_{C^{\alpha}(\partial D)}, 
\end{equation}
By \eqref{blowupu2},
\begin{equation}\label{normv12}
\| u_2\|_{L^{\infty}(B'_{\tilde{R}}(0'))}\leq\,C|\phi(0')||\ln\varepsilon|+C\|\phi\|_{C^{\alpha}(\partial D)}.
\end{equation}

  If $\phi(0')\neq0$, by \eqref{blowupu2} and \eqref{normv112}, we have
$$|\bar u(0')|\geq |u_2(0')|-|u_1(0')|\geq C|\phi(0')||\ln\epsilon|-C\|\phi\|_{C^{\alpha}(\partial D)}.$$
Thus, \eqref{blowuposcu} is proved. 

On the other hand, since for $x\in\Omega_{\tilde{R}}$, $|u(x)-\bar u(x)|\leq\mathcal{O}_{\tilde{R}}\leq\,C\|\phi\|_{C^{\alpha}(\partial D)}$, by using \eqref{normv112} and \eqref{normv12}, we have
\begin{equation}\label{ubdd}
\begin{split}
|u(x)|\leq&\, |\bar u(x')|+|u(x)-\bar u(x')|\\
\leq&\,|u_1(x')|+ |u_2(x') |+C\|\phi\|_{C^{\alpha}(\partial D)}\\
\leq&\, C|\phi(0')||\ln\epsilon|+C\|\phi\|_{C^{\alpha}(\partial D)}.
\end{split}
\end{equation}
So, \eqref{blowuposcu1} holds.
\end{proof}

\begin{proof}[Proof of Corollary \ref{cor11} (ii)]
We have shown that $\bar u$ is the solution to \eqref{barv2}, where $\tilde F$ is defined as \eqref{DIVF}.

By $\|\partial_n u\|_{L^{\infty}(\Omega)}\leq C\|\phi\|_{C^{\alpha}(\partial D)}$ in Theorem \ref{cor1}, $\psi(y')=\sqrt{1+|\nabla_{y'}g(y')|^2}\phi(y')$, $\phi(0')=0$ and \eqref{ubdd}, we have
\begin{equation}
\begin{split}
|\tilde F(x')|&\leq C\|\phi\|_{C^{\alpha}(\partial D)}(\varepsilon+|x'|^2)^{\frac{3}{2}},\\
|\psi(x')|&\leq C\|\phi\|_{C^{\alpha}(\partial D)}|x'|^{\alpha},\\
\|\bar{u}-\bar{u}(0)\|_{L^{\infty}(\Omega_{\tilde{R}})}&\leq\,C\|\phi\|_{C^{\alpha}(\partial D)}.
\end{split}
\end{equation}
Using  Proposition \ref{nonzero} (ii), for $x\in\Omega_{\frac{R}{4}}$, we know there exists a universal constant $\tilde \alpha\in(0,1)$ such that
\begin{equation}\label{zero_oscbaru}
\underset{B_{\frac{1}{4}\sqrt{\eta(x')}(x')}}\osc\,\bar u\leq C\|\phi\|_{C^{\alpha}(\partial D)}\eta(x')^{\frac{\tilde \alpha}{2}}.
\end{equation}
Using $\|\partial_n u\|_{L^{\infty}(\Omega)}\leq C\|\phi\|_{C^{\alpha}(\partial D)}$ in Theorem \ref{cor1} again, by \eqref{zero_oscbaru}, for $x\in\Omega_{\frac{R}{4}}$, we know
\begin{equation}\label{zero_oscu}
\underset{\Omega_{\frac{1}{4}\sqrt{\eta(x')}(x')}}\osc\,  u\leq C\|\phi\|_{C^{\alpha}(\partial D)}\eta(x')^{\frac{\tilde \alpha}{2}}.
\end{equation}
The proof is finished by Proposition \ref{prop_mainthm} and \eqref{zero_oscu}.

\end{proof}

\subsection{Proof of Theorem \ref{lowerbound}}

\begin{proof}[Proof of Theorem \ref{lowerbound}]
Without loss of generality, we assume that $\int_{\Omega\backslash \Omega_{\tilde R/2}}u=0$. Now we consider following auxiliary
$$\tilde u(x')=\bar u(x')-\frac{\phi(0')}{\sum_{i=1}^{n-1}b^{ii}(0')}\ln \delta(x'),$$
where $\bar u$ is the solution to \eqref{barv2} and $b^{ii}(x')=\partial_{ij}\delta(x')=\partial_{ij}(f-g)(x')$. Then by a direct computation, we know $\tilde u$ is the solution to
\begin{equation}
\sum_{i=1}^{n-1}\partial_i(\delta(x')\partial_i\tilde u(x'))=-\sum_{i=1}^{n-1}\partial_i\tilde F^{i}(x')+\psi(x')- \frac{\sum_{i=1}^{n-1}b^{ii}(x')}{\sum_{i=1}^{n-1}b^{ii}(0')}\phi(0').
\end{equation}
By $\|\partial_n u\|_{L^{\infty}(\Omega)}\leq C\|\phi\|_{C^{\alpha}(\partial D)}$ in Theorem \ref{cor1} and the definition of $\tilde F$ in \eqref{DIVF}, we have
\begin{equation}\label{lower1} 
|\tilde F(x')|\leq C\|\phi\|_{C^{\alpha}(\partial D)}(\varepsilon+|x'|^2)^{\frac{3}{2}}.
\end{equation}
Because $\phi\in C^{\alpha}$ and $b^{ii}\in C^{\gamma}$, for $\psi(x')- \frac{\sum_{i=1}^{n-1}b^{ii}(x')}{\sum_{i=1}^{n-1}b^{ii}(0')}\phi(0')$, we have
\begin{equation}\label{lower2}
\begin{split}
\left|\psi(x')- \frac{\sum_{i=1}^{n-1}b^{ii}(x')}{\sum_{i=1}^{n-1}b^{ii}(0')}\phi(0')\right|&\leq \left|\left(\phi(x')-\phi(0')\right)\sqrt{1+|\nabla_{x'}g(x')|^2}\right|\\
&+\left|\phi(0')\sqrt{1+|\nabla_{x'}g(x')|^2}-\phi(0')\right|+\left|\phi(0')\left(1- \frac{\sum_{i=1}^{n-1}b^{ii}(x')}{\sum_{i=1}^{n-1}b^{ii}(0')}\right)\right|\\
&\leq C\|\phi\|_{C^{\alpha}(\partial D)}|x'|^{\alpha}+C|\phi(0')||x'|^{\gamma}\\
&\leq C\|\phi\|_{C^{\alpha}(\partial D)}|x'|^{\alpha\gamma}
\end{split}
\end{equation}
By \eqref{lower1} and \eqref{lower2}, we can get
\begin{equation}\label{lower3}
\|\tilde F\|_{\varepsilon,\frac{3}{2},B_{\tilde R}}+\left\|\psi(x') - \frac{\sum_{i=1}^{n-1}b^{ii}(x') }{\sum_{i=1}^{n-1}b^{ii}(0')}\phi(0')\right\|_{\varepsilon,\frac{\alpha\gamma}{2},B_{\tilde R}}\leq C \|\phi\|_{C^{\alpha}(\partial D)}.
\end{equation}
By Proposition \ref{ubound}, \eqref{lower3} and  Proposition \ref{nonzero} (ii), for $\tilde u$, we have 
\begin{equation}
\underset{B_{ \sqrt{\varepsilon}/4 }'}\osc\,\tilde u\leq C\|\phi\|_{C^{\alpha}(\partial D)}|\ln \varepsilon| \varepsilon^{\frac{\tau}{2}} \tilde R^{-\tau},
\end{equation}
where $\tau\in(0,1)$ is a universal constant. So for sufficiently small $\varepsilon$ such that
$$\|\phi\|_{C^{\alpha}(\partial D)}|\ln \varepsilon| \varepsilon^{\frac{\tau}{2}} \tilde R^{-\tau}\leq\frac{1}{2C^2}|\phi(0')|,$$
we have
\begin{equation}\label{lowerr1}
\underset{B_{ \sqrt{\varepsilon}/4 }'}\osc\,\tilde u \leq \frac{1}{2C}|\phi(0')|.
\end{equation}
Because $\delta(x')=\varepsilon+(f-g)(x')\geq \varepsilon+\kappa|x'|^2$ and $\delta(0')=\varepsilon$, where $\kappa>0$, we have
\begin{equation}
\underset{x'\in B_{ \sqrt{\varepsilon}/4 }'}\sup \left(\delta(x')/\delta(0')\right)\geq 1+\frac{\kappa}{16}.
\end{equation}

This implies
\begin{equation}\label{lowerr2}
\underset{B_{ \sqrt{\varepsilon}/4 }'} \osc\, \tilde u_1\geq \frac{1}{C}|\phi(0')|,
\end{equation} 
where $\tilde u_1:=\frac{\phi(0')}{\sum_{i=1}^{n-1}b^{ii}(0')}\ln \delta(x')$.
By \eqref{lowerr1} and \eqref{lowerr2}, we know that
$$\underset{B_{ \sqrt{\varepsilon}/4 }'} \osc\,\bar u\geq\underset{B_{ \sqrt{\varepsilon}/4 }'} \osc\,\tilde u_1-\underset{B_{ \sqrt{\varepsilon}/4 }'} \osc\,\tilde u\geq\frac{1}{2C}|\phi(0')|. $$
Then by the mean value theorem, the proof is finished.

\end{proof}

\section{Proof of Theorem \ref{thDir}}
\begin{proof}[Proof of Theorem \ref{thDir}]
Without loss of generality, we assume that $\|\varphi\|_{C^{1,\alpha}(\partial\Omega)}\leq1.$ We extend $\varphi$ to domain $\Omega$ such that $\|\nabla\varphi\|_{C^{1,\alpha}(\Omega)}\leq\,C$. We only need to focus on the narrow region $\Omega_{R}$. 
Set $$w(x):=u(x)-\varphi(x),$$
then $w$ satisfies
\begin{equation*}\label{equinftyw01}
\begin{cases}
\dv(\nabla w)=-\dv(\nabla \varphi) & \mbox{in}~ \Omega_{R},\\
\partial_{\nu}w=-\partial_{\nu}\varphi&\mbox{on}~\Gamma_{R}^{+},\\
 w=0&\mbox{on}~\Gamma_{R}^{-},
\end{cases}
\end{equation*}
and $w+\varphi$ satisfies
\begin{equation}\label{equinftyw011}
\begin{cases}
\dv(\nabla (w+\varphi))= 0 & \mbox{in}~ \Omega_{R},\\
\partial_{\nu}(w+\varphi)=0&\mbox{on}~\Gamma_{R}^{+},\\
 w+\varphi=\varphi&\mbox{on}~\Gamma_{R}^{-}.\\
\end{cases}
\end{equation}

For $|z'|<R$, $0<t<s<\frac{1}{8}\sqrt{\eta(z')}$, let $\xi$ be a cutoff function satisfying $\xi(y')=1$ if $|y'-z'|<t$, $\xi(y')=0$ if $|y'-z'|>s$, $0\leq\xi(x')\leq1$ if $t\leq|x'-z'|\leq s$, and $|\nabla_{x'}\xi(x')|\leq\frac{2}{s-t}$. Multiplying the equation in \eqref{equinftyw011} by $  w\xi^{2}$, and by using integration by parts, we have
\begin{equation*}
\int_{\Omega_s(z)}\nabla(w+\varphi)\cdot\nabla (w\xi^2)=0.
\end{equation*}
So that
\begin{equation*}
\int_{\Omega_s(z)}|\nabla w|^2\xi^2\leq C\int_{\Omega_s(z)}w^2|\nabla \xi|^2+ C \int_{\Omega_s(z)}|\nabla \varphi|^2\xi^2.
\end{equation*}
Note that
\begin{equation*}
\begin{aligned}
\int_{\Omega_s(z)}w^2|\nabla\xi|^2\leq&\, \frac{C}{(s-t)^2}\int_{\Omega_s(z)}w^2(x)dx\\
\leq&\, \frac{C}{(s-t)^2}\int_{\Omega_s(z)}\Big(\int_{(x',-\epsilon/2+g(x'))}^x\partial_nw(x)dx_n\Big)^2dx\\
\leq&\, \frac{C\eta(z')^2}{(s-t)^2}\int_{\Omega_s(z)}|\nabla w|^2 dx,
\end{aligned}
\end{equation*}
and $\|\nabla\varphi\|_{L^{\infty}(\Omega)}\leq\,C$. Thus, 
\begin{equation}\label{iteration1}
\int_{\Omega_t(z)}|\nabla w|^2 \leq  \frac{C_0\eta(z')^2}{(s-t)^2}\int_{\Omega_s(z)}|\nabla w|^2+ C s^{n-1}\eta(z').
\end{equation}

Denote $$G(t)=\int_{\Omega_t(z)}|\nabla w|^2,\quad\mbox{and} ~t_0=\eta(z'),~ t_{i+1}=t_i+2\sqrt{C_0}\eta(z').$$
Then, by \eqref{iteration1}, we have $$G(t_i)\leq\frac{1}{4}G(t_{i+1})+	Ct_{i+1}^{n-1}\eta(z').$$
Choosing $k(z)= [\frac{1}{\sqrt{\eta(z')}}]$, similar as in the proof of Lemma \eqref{lem_v-barv}, after iterating $k(z)$ times, we have 
\begin{equation*} 
\int_{\Omega_{\eta(z')}(z)}|\nabla w|^2 \leq  C\eta(z')^n.
 \end{equation*}
This implies that $\int_{\Omega_{\eta(z')}(z)}|\nabla u|^2 \leq  C\eta(z')^n
$ as well. By a rescaling technique and the $W^{2,p}$ estimates and a bootstrap argument as in \cite{BLL,L}, we finish the proof.
\end{proof}

\noindent{\bf Acknowledgements.} The work of H. Li was partially Supported by Beijing Natural Science Foundation (No.1242006), the Fundamental Research Funds for the Central Universities (No.2233200015), and National Natural Science Foundation of China (No.12471191).

\noindent{\bf Conflict of Interest.} The authors declare that they have no conflict of interest.



\begin{thebibliography}{99}
\bibitem{ACKLY} H. Ammari, G. Ciraolo, H. Kang, H. Lee, and K. Yun, Spectral analysis of the Neumann-Poincaré operator and characterization of the stress concentration in anti-plane elasticity, Arch. Ration. Mech. Anal. 208 (2013), no. 1, 275–304. 

\bibitem{ADY} H. Ammari, B. Davies, and S. Yu, Close-to-touching acoustic subwavelength resonators: eigenfrequency separation and gradient blow-up, Multiscale Model. Simul. 18 (2020), no. 3, 1299–1317. 

\bibitem{AKLLL} H. Ammari, H. Kang, H. Lee, J. Lee, and M. Lim, Optimal estimates for the electric field in two dimensions, J. Math. Pures Appl. (9) 88 (2007), no. 4, 307–324. 

\bibitem{AKL} H. Ammari, H. Kang, and M. Lim, Gradient estimates for solutions to the conductivity problem, Math. Ann. 332 (2005), no. 2, 277–286. 

\bibitem{bab} I. Babuska, B. Andersson, P. Smith, and K. Levin, Damage analysis of fiber composites. I. Statistical analysis on fiber scale, Comput. Methods Appl. Mech. Engrg., 172 (1999), pp. 27–77.

\bibitem{BLY1}E. Bao, Y.Y. Li, and B. Yin, Gradient estimates for the perfect conductivity problem, Arch.Ration. Mech. Anal. 193 (2009), no. 1, 195–226. 

\bibitem{BLY2}E. Bao, Y.Y. Li, and B. Yin, Gradient estimates for the perfect and insulated conductivity problems with multiple inclusions, Comm. Partial Differential Equations 35 (2010), no. 11,1982–2006. 

\bibitem{BLJ} J.G Bao, H.J Ju, and H.G Li, Optimal boundary gradient estimates for Lamé systems with partially infinite coefficients, Adv. Math. 314 (2017), 583–629.

\bibitem{BLL} J.G. Bao, H.G. Li, and Y.Y. Li, Gradient estimates for solutions of the Lamé system with partially infinite coefficients, Arch. Ration. Mech. Anal. 215 (2015), no. 1, 307–351.

\bibitem{BLL2} J.G. Bao, H.G. Li, and Y.Y. Li, Gradient estimates for solutions of the Lamé system with partially infinite coefficients in dimensions greater than two, Adv. Math. 305 (2017), 298–338. 

\bibitem{BT1} E. Bonnetier and F. Triki, Pointwise bounds on the gradient and the spectrum of the Neumann-Poincaré operator: the case of 2 discs, Multi-scale and high-contrast PDE: from modelling, to mathematical analysis, to inversion, 2012, pp. 81–91. 

\bibitem{BT2} E. Bonnetier and F. Triki, On the spectrum of the Poincaré variational problem for two close-to-touching inclusions in 2D, Arch. Ration. Mech. Anal. 209 (2013), no. 2, 541–567.

\bibitem{BV} E. Bonnetier and M. Vogelius, An elliptic regularity result for a composite medium with“touching” fibers of circular cross-section, SIAM J. Math. Anal. 31 (2000), no. 3, 651--677.

\bibitem{CKN} L. Caffarelli, R. Kohn and L. Nirenberg, First order interpolation inequalities with weights, Compos. Math. 53 (1984), no. 3, 259--275.

\bibitem{CY} Y. Capdeboscq and S.C. Yang Ong, Quantitative Jacobian determinant bounds for the conductivity equation in high contrast composite media, Discrete Contin. Dyn. Syst. Ser. B 25(2020), no. 10, 3857–3887.

\bibitem{DL} H. Dong and H.G. Li, Optimal estimates for the conductivity problem by Green’s function method, Arch. Ration. Mech. Anal. 231 (2019), no. 3, 1427–1453. 

\bibitem{DLY} H. Dong, Y.Y. Li, and Z. Yang, Optimal gradient estimates of solutions to the insulated conductivity problem in dimension greater than two (2021). arXiv:2110.11313, J. Eur. Math.Soc., to appear.

\bibitem{DLY2} H. Dong, Y.Y. Li, and Z. Yang, Gradient estimates for the insulated conductivity problem: The non-umbilical case (2022). arXiv:2203.10081,  J. Math. Pures Appl., to appear.

\bibitem{DYZ} H. Dong, Z. Yang, and H. Zhu, The insulated conductivity problem with p-Laplacian. Arch. Ration. Mech. Anal.247(2023), no.5, Paper No. 95, 46 pp.

\bibitem{DZ} H. Dong and H. Zhang, On an elliptic equation arising from composite materials, Arch.Ration. Mech. Anal. 222 (2016), no. 1,47--89.

\bibitem {GT}D. Gilbarg; N.S. Trudinger, Elliptic partial differential equations of second order, Reprint of the 1998 edition, Classics in Mathematics. Springer, Berlin, 2001.

\bibitem{Gor} Y. Gorb, Singular behavior of electric field of high-contrast concentrated composites, Multiscale Model. Simul. 13 (2015), no. 4, 1312--1326. 

\bibitem{KLY1} H. Kang, M. Lim, and K. Yun, Asymptotics and computation of the solution to the conductivity equation in the presence of adjacent inclusions with extreme conductivities, J. Math.Pures Appl. (9) 99 (2013), no. 2, 234–249. 

\bibitem{KLY2} H. Kang, M. Lim, and K. Yun, Characterization of the electric field concentration between two adjacent spherical perfect conductors, SIAM J. Appl. Math. 74 (2014), no. 1, 125–146.

\bibitem{KY} H. Kang, and S. Yu, Quantitative characterization of stress concentration in the presence of closely spaced hard inclusions in two-dimensional linear elasticity. Arch. Ration. Mech. Anal. 232 (2019), no. 1, 121--196.

\bibitem{Kel} J. B. Keller, Stresses in Narrow Regions, Journal of Applied Mechanics 60 (1993), no. 4,1054--1056.

 \bibitem{KL} J. Kim and M. Lim, Electric field concentration in the presence of an inclusion with eccentric core-shell geometry, Math. Ann. 373 (2019), no. 1-2, 517–551. 

\bibitem{Li} H.G. Li, Lower bounds of gradient's blow-up for the Lam\'e system with partially infinite coefficients. J. Math. Pures Appl. (9) 149 (2021), 98--134.

\bibitem{L} H.G. Li, Asymptotics for the Electric Field Concentration in the Perfect Conductivity Problem, SIAM J. Math. Anal. 52 (2020), no. 4, 3350--3375.

\bibitem{LLY} H.G. Li, Y.Y. Li, and Z. Yang, Asymptotics of the gradient of solutions to the perfect conductivity problem, Multiscale Model. Simul. 17 (2019), no. 3, 899–925. 

\bibitem{LWX} H.G. Li, F. Wang, and L. Xu, Characterization of electric fields between two spherical perfect conductors with general radii in 3D, J. Differential Equations 267 (2019), no. 11, 6644–6690.

\bibitem{LX} H.G. Li and L. Xu, Optimal estimates for the perfect conductivity problem with inclusions close to the boundary, SIAM J.Math.Anal 49 (2017), no. 4, 3125–3142.

\bibitem{LZ} H.G. Li and Y. Zhao, Optimal gradient estimates for the insulated conductivity problem with general convex inclusions case(2024). arXiv:2404.17201.

\bibitem{LN} Y.Y. Li and L. Nirenberg, Estimates for elliptic systems from composite material, Comm.Pure Appl. Math. 56 (2003), no. 7, 892–925.

\bibitem{LV} Y.Y. Li and M. Vogelius, Gradient estimates for solutions to divergence form elliptic equations with discontinuous coefficients, Arch. Ration. Mech. Anal. 153 (2000), no. 2, 91--151.

\bibitem{LY} Y.Y. Li and Z. Yang, Gradient estimates of solutions to the insulated conductivity problem in dimension greater than two. Math. Ann. 385 (2023), no. 3-4, 1775--1796. 

\bibitem{L1} G. M. Lieberman, Oblique derivative problems for elliptic equations. World Scientific Publishing Co. Pte. Ltd., Hackensack, NJ, 2013. xvi+509 pp. ISBN: 978-981-4452-32-8.

\bibitem{LY1} M. Lim and K. Yun, Blow-up of electric fields between closely spaced spherical perfect conductors, Comm. Partial Differential Equations, 34 (2009), pp. 1287--1315.

\bibitem{Weinkove} B. Weinkove, The insulated conductivity problem, effective gradient estimates and the maximum principle. Math. Ann. 385 (2023), no. 1-2, 1–16.

 \bibitem{Y1} K. Yun, Estimates for electric fields blown up between closely adjacent conductors with arbitrary shape, SIAM J. Appl. Math. 67 (2007), no. 3, 714–730. 

\end{thebibliography}
\end{document}